\newcommand{\plim}[1][]{\mathop{\varprojlim}\limits_{#1}}
\newtheorem{Thm}{Theorem}[section]
\newtheorem{Cor}[Thm]{Corollaly}
\newtheorem{Prop}[Thm]{Proposition}
\newtheorem{Lem}[Thm]{Lemma}
\newtheorem{Rem}[Thm]{Remark}
\newtheorem{Def}[Thm]{Definition}
\newtheorem{Asum}[Thm]{Assumption}
\def\slash#1{\not\!#1}
\numberwithin{equation}{section}
\newcommand{\ord}{{\rm{ord}}}
\newcommand{\divv}{{\rm{div}}}
\newcommand{\Det}{{\rm{det}}}
\newcommand{\Br}{{\rm{Br}}}
\newcommand{\CH}{{\rm{CH}}}
\newcommand{\Pic}{{\rm{Pic}}}
\newcommand{\Hom}{{\rm{Hom}}}
\newcommand{\Ker}{{\rm{Ker}}}
\newcommand{\CoKer}{{\rm{Coker}}}
\newcommand{\Tr}{{\rm{Tr}}}
\newcommand{\Gal}{{\rm{Gal}}}
\newcommand{\inv}{{\rm{inv}}}
\newcommand{\ind}{{\rm{ind}}}
\newcommand{\cor}{{\rm{cor}}}
\newcommand{\NS}{{\rm{NS}}}
\newcommand{\res}{{\rm{res}}}
\newcommand{\Nr}{{\rm{Nr}}}
\newcommand{\rat}{{\rm{rat}}}
\newcommand{\Spec}{{\rm{Spec}}}
\newcommand{\Cl}{{\rm{Cl}}}
\newcommand{\resp}{{\rm{resp}}}
\newcommand{\Imm}{{\rm{Im}}}
\newcommand{\Proj}{{\rm{Proj}}}
\newcommand{\dlog}{{\rm{dlog}}}
\newcommand{\tcf}{{\rm{the\;class\;of}}}
\newcommand{\val}{{\rm{val}}}
\newcommand{\tr}{{\rm{tr}}}
\newcommand{\gr}{{\rm{gr}}}
\newcommand{\Symb}{{\rm{Symb}}}
\newcommand{\sing}{{\rm{sing}}}
\newcommand{\id}{{\rm{id}}}
\newcommand{\loc}{{\rm{loc}}}
\newcommand{\Gys}{{\rm{Gys}}}
\newcommand{\lin}{{\rm{lin}}}
\newcommand{\Ch}{{\rm{Ch}}}
\newcommand{\Frac}{{\rm{Frac}}}
\newcommand{\codim}{{\rm{codim}}}
\newcommand{\ur}{{\rm{ur}}}
\newcommand{\cd}{{\rm{cd}}}
\newcommand{\pr}{{\rm{pr}}}
\newcommand{\Cone}{{\rm{Cone}}}
\begin{document}

\title[Duality for $p$-adic \'Etale Tate twists with modulus ]{Duality for $p$-adic \'Etale Tate twists with modulus}

\author[K. Yamamoto]{Kento Yamamoto}
\address{Department of Mathematics,\;Hokkaido University,\;Sapporo 060-0810\;Japan.}
\email{yamamoto@math.sci.hokudai.ac.jp}

\date{\today}

\begin{abstract}
In this paper, we define $p$-adic \'etale Tate twists for a modulus pair $(X, D)$, where $X$ is a regular 
semi-stable family and $D$ is an effective Cartier divisor on $X$ which is flat over a base scheme. The main result of this paper is an arithmetic duality of $p$-adic \'etale Tate twists for proper modulus pairs $(X, D)$, which holds as a pro-system with respect to the multiplicities of the irreducible components of $D$. 
\end{abstract}

\subjclass[2010]{14G40,\;14F30,\; 14F42 }

\keywords{}

\maketitle
\tableofcontents

\section{Introduction }
Let $p$ be a prime number.
Let $K$ be a $p$-adic field and let $\mathscr{O}_K$ be its valuation ring with $k$ the residue filed.
In recent years, theory of motives and the (higher) Chow group has been studied for a modulus pair $(X,D)$\;(\cite{KMSYI},\;\cite{KMSYII}, \cite{KMSYIII}, \cite{BS}, and \cite{RS} etc.).
One of the important question with respect to the theory of motives with modulus is $p$-adic realization of it. In \cite{Sa2} Conjecture 1.4.1 and \cite{Sa3} Remark 7.2 imply that $p$-adic \'etale Tate twists have a deeply
relation with motivic complexes. 
 
  In this paper, we introduce $p$-adic \'etale Tate twists $\mathfrak{T}_n(r)_{X|D}$ for a modulus pair $(X,D)$, where $X$ is a regular semi-stable family over $\mathscr{O}_K$ and $D$ is an effective Cartier divisor on $X$\;(we assume that $D$ is flat over $\mathscr{O}_K$). The object $\mathfrak{T}_n(r)_{X|D}$ is a generalization of $p$-adic \'etale Tate twists\;(cf.\;\cite{Sch},\;\cite{Sa2}). We prove arithmetic duality of $\mathfrak{T}_n(r)_{X|D}$ for  proper modulus pairs $(X,D)$, where $X$ is a regular semi-stable family and proper over $\mathscr{O}_K$.

We proved the following result in this paper: 
\begin{Thm}\label{Main Thm} Let $r \leq p-2$ and $\zeta_p\in K$. We put $Y^0$ a set of generic points of Y. We assume that $X$ is a semi-stable family and proper over $\mathscr{O}_K$ and $|D| \cap Y^0= \emptyset$. The pairing\\
\label{eeq1}\[\plim[m]H^i_{\acute{e}t}(X, \mathfrak{T}_n(r)_{X|mD}) \times H_{Y \cap U}^{2d+1-i}(U, \mathfrak{T}_n(d-r)_U) \longrightarrow \mathbb{Z}/p^n,\]\\
is a non-degenerate pairing for any $i\geq 0$ and $n \geq 0$.
\end{Thm}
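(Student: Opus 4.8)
The plan is to deduce the duality with modulus from the known duality of ordinary $p$-adic \'etale Tate twists on the proper regular semi-stable $X$, by comparing localization triangles on the modulus side with the dual localization sequence on $U$, supplementing this with a separate duality for the graded pieces of the modulus filtration supported along $D$, and finally passing to the projective limit over $m$. First I would construct the pairing itself. Writing $U = X \setminus |D|$, the object $\mathfrak{T}_n(r)_{X|mD}$ restricts on $U$ to the ordinary twist $\mathfrak{T}_n(r)_U$, so restriction gives maps $H^i_{\acute{e}t}(X,\mathfrak{T}_n(r)_{X|mD}) \to H^i_{\acute{e}t}(U,\mathfrak{T}_n(r)_U)$ that are compatible with the transition maps in $m$. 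Combining this with the product $\mathfrak{T}_n(r)_U \otimes^{\mathbb{L}} \mathfrak{T}_n(d-r)_U \to \mathfrak{T}_n(d)_U$ and the trace $H^{2d+1}_{Y \cap U}(U,\mathfrak{T}_n(d)_U) \to \mathbb{Z}/p^n$, the cup product with supports yields the displayed pairing, and its compatibility with the transition maps lets it descend to $\plim[m]$.

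The next step is a dévissage that reduces non-degeneracy to the non-modulus case. Using the short exact sequences relating $\mathfrak{T}_n$, $\mathfrak{T}_{n-1}$ and $\mathfrak{T}_1$ together with the five lemma, I would first reduce to $n=1$. Then I would invoke the localization distinguished triangle for the modulus object, relating $\mathfrak{T}_1(r)_{X|mD}$ to the case $m=0$ (the ordinary twist $\mathfrak{T}_1(r)_X$) and to the graded boundary term $\gr^m$ supported on $D$; on the dual side I would use the localization sequence attached to the closed immersion $Y \cap U \hookrightarrow U$. Comparing these two triangles through the pairing and applying the five lemma, non-degeneracy of the modulus pairing follows from two inputs: the arithmetic duality of $\mathfrak{T}_1(r)_X$ on the proper semi-stable $X$ (which is the known result), and a duality for the boundary graded pieces.

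For this boundary duality I would identify the graded pieces $\gr^m$ as explicit sheaves on the components of $D$. Here the hypotheses $r \leq p-2$ and $\zeta_p \in K$ make the symbol and $\dlog$ descriptions available, so that these pieces become logarithmic de Rham--Witt or relative differential sheaves whose duality is known; the hypothesis $|D| \cap Y^0 = \emptyset$ guarantees that $D$ is horizontal, meeting the special fiber $Y$ only in lower dimension, so that the components of $D$ are regular and the Gysin formalism and an induction on dimension apply to each of them. Matching the trace on $U$ with the trace built into this boundary duality is the delicate point that makes the comparison of triangles compatible with the pairing.

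Finally I would pass to the limit. Since $X$ is proper and $\mathfrak{T}_n(r)_{X|mD}$ is constructible, all the groups $H^i_{\acute{e}t}(X,\mathfrak{T}_n(r)_{X|mD})$ are finite, so the pro-system $\{H^i_{\acute{e}t}(X,\mathfrak{T}_n(r)_{X|mD})\}_m$ is Mittag--Leffler, its $\plim[m]^1$ vanishes, and the perfect pairings obtained for each finite level assemble into a perfect pairing of $\plim[m]H^i_{\acute{e}t}(X,\mathfrak{T}_n(r)_{X|mD})$ against the fixed group $H^{2d+1-i}_{Y\cap U}(U,\mathfrak{T}_n(d-r)_U)$. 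I expect the main obstacle to be Step three together with its compatibility in Step two: namely constructing the fundamental class and trace in a manner compatible with the modulus filtration, and proving that the graded pieces of that filtration are dual to the local terms of the $(U, Y\cap U)$ localization sequence in a way that is compatible with both traces and survives the passage to $\plim[m]$.
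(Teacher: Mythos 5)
Your proposal has genuine gaps, the most serious of which is the final "assembly" step. The duality here is intrinsically a statement about a pro-system paired against a fixed group on the open $U$ (equivalently, against an ind-system of poles $\varinjlim_{m'}(-)\otimes\mathscr{O}_Y(m'D_s)$): there are \emph{no} perfect pairings at each finite level $m$ that could be assembled. The transition maps of the relevant sheaves involve the shifts $D_s\mapsto [D_s/p]$ coming from the operators $\underline{p}$, $\mathcal{R}$ and the Cartier isomorphism (see Lemma \ref{Lem1} and Lemma \ref{Clin}), so the finite-level objects are not dual to anything of the same level; only the pro-systems are well-behaved, which is why every statement in Sections 3--6 is formulated with $\{\,\cdot\,\}_m$. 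A second gap is the construction of the pairing itself: $Y\cap U=V$ is not proper over $k$, so the trace map $H^{2d+1}_{Y\cap U}(U,\mathfrak{T}_n(d)_U)\to\mathbb{Z}/p^n$ you invoke does not exist a priori, and there is no pushforward from supports in $Y\cap U$ on $U$ to supports in $Y$ on $X$ that would produce it. The entire point of the modulus condition is that the product of a class with prescribed zeros along $D$ and a class with poles along $D$ is regular and hence extends to the proper special fiber $Y$, where Sato's trace is defined; this is exactly what the explicit multi-term Milne pairing $(\Upsilon)$ of Section 7, which lands in $\nu^{d-1}_{Y,n}[-d-1]$ on $Y$, accomplishes, and what restriction-to-$U$ followed by cup product cannot see.

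The d\'evissage you propose also does not line up. You filter the modulus object in the direction of $m$ (graded pieces supported on $D_s$) but dualize against the localization triangle for $Y\cap U\hookrightarrow U$, which compares the special and generic fibres --- these are transverse directions, and the "boundary duality" you would need for the graded pieces along $D$ is not a known input; it is essentially equivalent to the theorem being proved. The paper instead reduces to $n=1$ by Bockstein triangles and then runs a \emph{descending induction on the twist} $r$ via the cones $\mathbb{K}(r)_m$ and $\mathbb{M}(d-r)$ of multiplication by $\mu'$, so that the inductive step rests on two dualities proved on the normal crossing special fibre $Y$: the duality of $\{U^1\mathscr{M}^r_m\}_m$ against $\alpha'_*U^1M^{d-r+1}_V$ (Theorem \ref{D-U}, proved through the explicit reciprocity law of Theorem \ref{ERL}, the filtrations $U^\bullet\supset T^\bullet\supset V^\bullet$ and Serre--Hartshorne duality for the coherent graded pieces), and the duality of $\{\nu^{r}_{Y|mD_s,n}\}_m$ against $\alpha'_*\lambda^{d-r}_{V,n}$ (Theorem \ref{PThm}, via the sheaves $\Xi^r_{Y|D_s}$, $\Lambda^r_{Y|D_s}$ and the Cartier operator). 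Establishing these modulus versions is the bulk of the paper and is not covered by citing known dualities for logarithmic de Rham--Witt sheaves.
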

When $D=\emptyset$ in Theorem \ref{Main Thm}, we have Theorem 1.2.2 in \cite{Sa2}. Here $\mathfrak{T}_n(d-r)_U$ is $p$-adic \'etale Tate twists, which is defined in \cite{Sa2}. 
 We prove Theorem$\ref{Main Thm}$ along the strategy of Theorem 1.2.2 in \cite{Sa2}. The key idea of the construction of pairing in Theorem \ref{Main Thm} is a Milne's pairing (a pairing of two-term complexes) in \cite{Mil}, p.175 or \cite{Mil2}, p.$217$. This construction idea is also applied to the proof of Theorem4.1.4 in \cite{JSZ}. We construct the pairing in Theorem \ref{Main Thm} by extending Milne's pairing to a pairing of multiple-term complexes. We introduce some logarithmic Hodge-Witt type sheaves for modulus pairs $(X,D)$ to show Theorem \ref{Main Thm} and show its duality property.

An application of Theorem \ref{Main Thm}, we have the following Corollary\;(Take $i=2d$ and $r=d$ in Theorem $\ref{Main Thm}$):
\begin{Cor}(cf. \cite{JSZ}) Under the same assumption of Theorem \ref{Main Thm}, we obtain a natural isomorphism 
\[\plim[m]H^{2d}(X, \mathfrak{T}_n(d)_{X|mD})\xrightarrow{\cong}\pi_1^{ab}(U)/p^n. \]
\end{Cor}
Here $\pi_1^{ab}(U)$ is the abelianized fundamental group of $U$. \\

\textbf{Notation and Conventions.}\begin{enumerate}[\upshape (i)]
\item Let $K$ be a $p$-adic field, $\mathscr{O}_K$ denotes the integer ring of $K$, and $\pi_K$ denotes the prime element of $\mathscr{O}_K$. We denote $k$ the residue field of $\mathscr{O}_K$ which is a perfect field of characteristic $p$.
\item Let $p$ be a prime number, and let $\zeta_p$ be a primitive $p$-the roots of unity.
\item Unless indicated otherwise, all cohomology groups of schemes are taken over the \'etale topology.
For a scheme $X$ and an integer $n \geq 2$, $D(X_{\acute{e}t}, \mathbb{Z}/n)\\({\rm resp.}\; D^b(X_{\acute{e}t}, \mathbb{Z}/n))$ denotes the derived category of (resp. bounded) complexes of \'etale $\mathbb{Z}/n$-sheaves on $X_{\acute{e}t}.$
\item\ Throughout this paper, we assume that a scheme $X$ is always separated over $\mathscr{O}_K$. 
\item Let $p$ be a prime number. For a scheme $X$, we put $X_n:=X \otimes \mathbb{Z}/p^n\mathbb{Z}$.
\item Let $k$ be a field, and let $X$ be a pure-dimensional scheme which is of finite type over $\Spec(k)$. We call $X$ $a\; normal\; crossing\; scheme$ over $\Spec(k)$, if it is evrywhere \'etale locally isomorphic to 
\[\Spec\big(k[T_0,...T_N]/(T_0\cdot\cdot\cdot T_N)\big),\] 
\noindent
for some integer $a$ with $0\leq a \leq N:=\dim(X)$.

\item Let $X$ be a pure-dimensional scheme which is flat of finite type over $\Spec(\mathscr{O}_K)$. We call X be a $regular\;semistable\;family$ over  $\Spec(\mathscr{O}_K)$, if it is regular and 
evrywhere \'etale locally isomorphic to 
\[\Spec\big(\mathscr{O}_K[T_0,...T_d]/(T_0\cdot\cdot\cdot T_a-\pi_K)\big),\] 
\noindent
for some $a$ such that $0\leq a \leq d:=\dim(X/\mathscr{O}_K)$.
\item For a positive integer $n$ invertible on a scheme $X$, $\mu_n$ denotes the \'etale sheaf of $n$-th roots of unity. 
\end{enumerate}
\mbox{}

\noindent
We assume the following situation\;($\clubsuit$)\;:
\begin{itemize}
\item Let $X$ be a $regular\;semistable\;family$ over  $\Spec(\mathscr{O}_K)$. We denote $Y:=X \otimes_{\mathscr{O}_K}k$ and $X_K:=X \otimes_{\mathscr{O}_K}K$. Let $D \subset X$ be an effective Cartier divisor on $X$ which is flat over $\Spec(\mathscr{O}_K)$ and $Y \cup D_{\rm{red}}$ has normal crossings on $X$.
\item Let $M_X$ be a logarithmic structure on $X$ associated with $D_{\rm{red}} \cup Y$.
 Let $M_D$ be a logarithmic structure on $D$ induced by the inverse image of $M_X$. For $n\geq 1$, we write $M_{X_n}$ for the inverse image of log structure of $M_X$ into $X_n$.
Let $(Y, M_Y)$ be the reduction of mod $\pi$ of $(X, M_X)$.
\end{itemize}
 Put $U:=X-D$, $V:=Y-(Y \cap D)$, $U_K:=X-(Y \cup D)$, and $V_X:=X-Y$, and we consider a diagram of immersions.
\[
 \xymatrix@M=7pt@C=50pt{
    V \ar@{^{(}->}[r]^{i'} \ar@{^{(}->}[d]_{\alpha'} & U \ar@{^{(}->}[d]_{\alpha}&  \ar@{^{(}->}[d] \ar@{_{(}->}[l]_{j'} U_K \ar@{^{(}->}[ld]^\psi \\
    Y \ar@{^{(}->}[r]^i & X& \ar@{_{(}->}[l]^j V_X
   }\]



\subsection{Logarithmic Hodge-Witt type sheaves\;(\cite{Sa1},\cite{Sa2},\cite{Sa3})}\mbox{}

Throughout this subsection, $n$ denotes a non-negative integer and $r$ denotes a positive integer.
Let $k$ be a perfect field of positive characteristic $p$. Let $X$ be a pure-dimensional scheme of finite type over $\Spec(k)$. We define the \'etale sheaves $\nu_{X,r}^n$ and $\lambda_{X,r}^n$ on $X$ as follows:
\begin{equation}
\nu_{X,r}^n:=\Ker \Big(\bigoplus_{x \in X^{0}}{i_{x}}_*W_r\Omega_{x,\log}^n \stackrel{\partial^{\val}}{\longrightarrow }\bigoplus_{x \in X^{1}}{i_{x}}_*W_r\Omega_{x,\log}^{n-1}    \Big)
\end{equation}
\begin{equation}
\lambda_{X,r}^n:=\Imm \Big((\mathbb{G}_{m,X})^{\otimes n}\stackrel{\dlog}{\longrightarrow }\bigoplus_{x \in X^{0}}{i_{x}}_*W_r\Omega_{x,\log}^{n}    \Big)
\end{equation}

\noindent
where $i_x$ denote the canonical map $x \hookrightarrow X$ for a point $x \in X$ and $\partial^{\val}$ denote the sum of ${\partial^{\val}}_{y,x}$'s with $y \in X^0$ and $x \in X^1$(see \cite{Sa2}, $\S 1.8$). Here $W_r\Omega_{X,\log}^n$ denotes the \'etale subsheaf of $W_r\Omega_{X}^n$, which defined in \cite{B}  and \cite{Il} for a smooth variety over $\Spec(K)$.
It is obvious that $\lambda_{X,r}^n$ is  a subsheaf of $\nu_{X,r}^n$. If $X$ is smooth then both $\nu_{X,r}^n $ and  $\lambda_{X,r}^n$ agree with the sheaf $W_r\Omega_{X, \log}^n$.
We next review the duality result in \cite{Sa1}.  For integers $m,n \geq 0$, there is a natural pairing of sheaves 
\begin{equation}
\nu_{X,r}^n \times \lambda_{X,r}^m \longrightarrow \nu_{X,r}^n
\end{equation}
 (see loc.\;cit.,3.1.1).
\begin{Thm}\label{dualnu}(Duality, loc.\;cit., {\rm1.2.2}).
Let $k$ be a finite field, and let $X$ be a normal crossing scheme of dimension $N$ which is proper over $\Spec(k)$. Then for integers $q$ and $n$ with $0 \leq n \leq N$, the natural mapping
\begin{equation}
H^q(X, \nu_{X,r}^n )\times H^{N+1-q}(X,  \lambda_{X,r}^{N-n})\longrightarrow H^{N+1}(X, \nu_{X,r}^n )\stackrel{\tr_X}{\longrightarrow}\mathbb{Z}/p^r\mathbb{Z}
\end{equation}
is non-degenerate pairing of finite $\mathbb{Z}/p^r\mathbb{Z}$-modules.
\end{Thm}

\noindent
Here trace map $\tr_X$ is constructed in \cite{Sa1}, Proposition 2.5.9 and Corollary 2.5.11. In \cite{Sa3}, Sato have introduced a following two new logarithmic sheaves under the condition ''with Cartier divisor $D$":
\begin{Def}(\cite{Sa3}, {\rm \S2,  Definition 2.1})
\begin{equation}
\nu_{(Y, Y\cap D),n}^q:= \Ker \Big(\bigoplus_{y \in V^{0}}{i_{y}}_*W_r\Omega_{y,\log}^q \stackrel{\partial^{\val}}{\longrightarrow }\bigoplus_{y \in V^{1}}{i_{y}}_*W_r\Omega_{y,\log}^{q-1} \Big),
\end{equation}
\begin{equation}
 \lambda^q_{(Y,Y\cap D), n}:=\Imm\left(d\log: (\alpha'_*\mathscr{O}_V)^{\times}\longrightarrow \bigoplus_{y\in V^0} {i_y}_*W_n\Omega^q_{x,\log}\right).
\end{equation}
Here $V:=Y-(Y \cap D)$ and $i_y : y \rightarrow Y (y \in Y)$ denotes the composite map $y \hookrightarrow V \hookrightarrow Y$.
\end{Def}
We will introduce a new logarithmic sheaves as a subsheaf of these sheaves in section  below.  

\begin{Def}($p$-adic \'etale Tate twist\;\cite{Sa2}, Definition 4.2.4)\\
Sato have defined the $p$-adic \'etale Tate twist $\mathfrak{T}_n(r)_{X}$ by the following distinguished triangle:
\[i_*\nu^{r-1}_{Y,n}[-r-1]\longrightarrow \mathfrak{T}_n(r)_{X} \longrightarrow \tau_{\leq r}Rj_*\mu_{p^n}^{\otimes r} \longrightarrow i_*\nu^{r-1}_{Y,n}[-r]\]
in $D^b\left(X_{\acute{e}t}, \mathbb{Z}/p^n\mathbb{Z}\right)$.
\end{Def}
The $p$-adic \'etale Tate twist $\mathfrak{T}_n(r)_{X}$ has some good properties (trivialization, acyclicity, purity, compatibility, product structure, etc.). See \cite{Sa2} for the details of properties of $p$-adic \'etale Tate twist.


\section{$p$-adic \'etale Tate twists with modulus}\mbox{}
We assume that $0 \leq r \leq p-2$ and $X$ is in the setting ($\clubsuit$). The aim of this section is to construct $p$-adic \'etale Tate twists with modulus $\mathfrak{T}_n(r)_{X|D} \in D^b(Y_{\acute{e}t}, \mathbb{Z}/p^n\mathbb{Z})$. We first review the definition of the syntomic complexes with modulus $s_n(r)_{X|D}\in D^b(Y_{\acute{e}t}, \mathbb{Z}/p^n\mathbb{Z})$, which is constructed in \cite{Y}. To define the syntomic complex with modulus in local situation, we assume the existence of the following four datas:
\begin{Asum}\label{Asum*}
\begin{itemize}
\item There exists exact closed immersions \[\beta_n: (X_n,M_{X_n}) \hookrightarrow(Z_n,M_{Z_n})\ \ {\rm and}\ \ \beta_{n,D}: (D_n,M_{D_n}) \hookrightarrow(\mathscr{D}_n,M_{\mathscr{D}_n})\] of log schemes for $n \geq 1$ such that $(Z_n,M_{Z_n})$ and $(\mathscr{D}_n,M_{\mathscr{D}_n})$ are smooth over $W:=W(k)$, and such that the following diagram is cartesian :
\[
\xymatrix@M=10pt{
X_n \ar@{}[rd]|{\square} \ar@<-0.3ex>@{^{(}->}[r]^-{\beta_n}&  Z_n &\\
D_n \ar@<-0.3ex>@{^{(}->}[u]\ar@<-0.3ex>@{^{(}->}[r]^-{\beta_{D,n}}& \mathscr{D}_n \ar@<-0.3ex>@{^{(}->}[u]&  
}
\]

\item There exist a compatible system of liftings of  Frobenius endomorphisms $\{F_{Z_n}:(Z_n,M_{Z_n})\rightarrow (Z_n,M_{Z_n})\}$ and $\{F_{\mathscr{D}_n}:(\mathscr{D}_n,M_{\mathscr{D}_n})\rightarrow(\mathscr{D}_n,M_{\mathscr{D}_n})\}$ for each $n \in \mathbb{N}$ {\rm(\cite[p.71, (2.1.1)--(2.1.3)]{Tsu0})}.
\item\  The systems $\{F_{Z_n}:(Z_n,M_{Z_n})\rightarrow (Z_n,M_{Z_n})\}$ and $\{F_{\mathscr{D}_n}:(\mathscr{D}_n,M_{\mathscr{D}_n})\rightarrow (\mathscr{D}_n,M_{\mathscr{D}_n})\}$fit into the following commutative diagram for for each $n \in \mathbb{N}$: 
\[
\xymatrix@M=10pt{
(Z_n,M_{Z_n}) \ar[r]^-{F_{Z_n}}& (Z_n,M_{Z_n})&\\
(\mathscr{D}_n,M_{\mathscr{D}_n}) \ar[u]\ar[r]^-{F_{\mathscr{D}_n}}&(\mathscr{D}_n,M_{\mathscr{D}_n})  \ar[u]&  
}\]

\item  $\mathscr{D}_n$ is an effective Cartier divisor on $Z_n$ such that $\beta_{D,n}^*\mathscr{D}_n=D_n$ and $F_{Z_n}$ which induces a morphism $\mathscr{D}_n \longrightarrow \mathscr{D}_n$.
\end{itemize}
\end{Asum}

In \cite{Y}, the following complex was defined for a modulus pair $(X,D)$:

\begin{Def}{\rm(Definition2.11,\;\cite{Y}\;syntomic complex with modulus)}\\
We assume $r \leq p-1$. We define
\[
s_n(r)_{X|D, (Z_n,M_{Z_n}), (\mathscr{D}_n,M_{\mathscr{D}_n})}:={\rm Cone}\big(1-\varphi_{r}:  J_{\mathscr{E}_n}^{[r-\text{\large$\cdot$}]}\otimes_{\mathscr{O}_{Z_n}}\omega_{Z_n|\mathscr{D}_n}^{\text{\large$\cdot$}} \rightarrow   \mathscr{O}_{\mathscr{E}_n}\otimes_{\mathscr{O}_{Z_n}}\omega_{Z_n|\mathscr{D}_n}^{\text{\large$\cdot$}} \big)[-1],
\]
where $\varphi_r=\varphi_{r-q}\otimes \land^{q} \frac{d\varphi}{p}$ in degree $q$. We denote this complex by $s_n(r)_{X|D}$ for simplicity. We put $\omega_{Z_n}^q:=\Omega_{Z_n/W_n}^q\big(\log M_{Z_n}\big)$ and put $\omega_{Z_n|\mathscr{D}_n}^{q}:=\omega_{Z_n}^{q}\otimes_{\mathscr{O}_{Z_n}} \mathscr{O}_{Z_n}(-\mathscr{D}_n)\;(q\geq 0)$ which are locally free $\mathscr{O}_{Z_n}$-modules. 
\end{Def} When we consider the global situation of the syntomic complex with modulus, we can patch the another complex ${\mathscr{S}_n(q)}^{loc}_{X|D}$, which is quasi-isomorphic to $s_n(r)_{X|D}$ in local situation\;(see \cite{Y}, Lemma 2.12).
There is a product structure of syntomic complex $s_n(r)_{X|D}$:

\begin{Def}(Product structure of $s_n(r)_{X|D}$, cf.\;\S2.2\;\cite{Tsu1})\label{Prod}
For $q, q' \geq 0$, we define a product  morphism in $D^-(Y_{\acute{e}t}, \mathbb{Z}/p^n\mathbb{Z})$ as follows:
\begin{equation}
s_n(q)_{X|D} \otimes^{\mathbb{L}} s_n(q')_{X|D} \longrightarrow s_n(q+q')_{X|D}.
\end{equation}
\[(x, y)\otimes (x', y') \mapsto (xx', (-1)^j  xy' + y \varphi_{q'}(x')), \]
where\[(x,y) \in s^j_n(q)_{X|D}=\left(J^{[q-j]}_{\mathscr{E}_n} \otimes_{\mathscr{O}_{Z_n}} \omega^j_{Z_n|\mathscr{D}_n}\right)\oplus \left( \mathscr{O}_{\mathscr{E}_n}\otimes_{\mathscr{O}_{Z_n}} \omega^{j-1}_{Z_n|\mathscr{D}_n} \right)\] and 
\[(x', y') \in s^{j'}_n(q')_{X|D}\left(J^{[q'-j']}_{\mathscr{E}_n} \otimes_{\mathscr{O}_{Z_n}} \omega^{j'}_{Z_n|\mathscr{D}_n}\right)\oplus \left( \mathscr{O}_{\mathscr{E}_n}\otimes_{\mathscr{O}_{Z_n}} \omega^{j'-1}_{Z_n|\mathscr{D}_n} \right).\]
\end{Def}

\noindent
We define the following logarithmic Hodge-Witt type sheaf for a modulus pair $(X,D)$, which plays an important role in the proof of main result of this paper.
\begin{Def}
For $q \geq 1$, we define
\begin{equation}
\nu_{Y|D,n}^{q-1}:=\Imm\Big(\mathcal{H}^{q}\big(s_n(q)_{X|D}\big)\xrightarrow{\theta} i^*R^{q}\psi_*\mu_{p^n}^{\otimes q}\xrightarrow{\sigma_{(X,D)}} \nu_{(Y, D\cap Y),n}^{q-1}\Big).
\end{equation}
Here morphism $\theta$ is induced by the map ${s_n(q)}_{X|D}\longrightarrow {\mathcal{S}_n(q)}_{(X,M_X)}$ and the map $\sigma_{(X,D)}$ is defined in \cite{Sa3}.
\end{Def}
\begin{Rem}
When $D_s=\emptyset$, we have $\nu^{r-1}_{Y|D,n}=\nu^{r-1}_{Y,n}$, which is considered in Sato's paper \cite{Sa1}.
The last morphism $\sigma_{(X,D)}$ is surjective by \cite{Sa3}, Theorem 3.4.
When $Y \subset D_s$, we have $\nu_{Y|D,n}^{q-1}=0$. 

\end{Rem}

\noindent
Let $\sigma_{X|D,n}^r$ be the morphism $s_n(r)_{X|D}\longrightarrow \nu_{Y|D,n}^{r-1}[-r]$ in $D^b(Y_{\acute{e}t},\mathbb{Z}/p^n\mathbb{Z})$,
which naturally induced by the definition of $\nu_{Y|D}^{r-1}$. This is an analogue of the map $\sigma^r_{X,n}$ in (3.2.5), \cite{Sa2}.
\begin{Lem}Suppose $r \geq 1$, and let 
\begin{equation}
 \nu_{Y|D,n}^{r-1}[-r-1] \xrightarrow{g} \mathcal{K} \xrightarrow{t} s_n(r)_{X|D} \xrightarrow{\sigma_{X|D,n}(r)}\nu_{Y|D,n}^{r-1}[-r]
 \end{equation}
 \noindent
 be a distinguished triangle in $D^b(Y_{\acute{e}t},\mathbb{Z}/p^n\mathbb{Z})$. Then $\mathcal{K}$ is concentrated in $[0,r]$, the triple $(\mathcal{K},t,g)$ is 
 unique up to a unique isomorphism and $g$ is determined by the pair $(\mathcal{K}, t)$. 
\end{Lem}
\begin{proof}
The map $\sigma_{X|D,n}(r)$ is surjective by the definition of $\nu_{Y|D,n}^{r-1}$. Since $\mathcal{K}$ is acyclic outside of [$0,r$], by using Lemma 2.1.1 in \cite{Sa2},
we can compute $\Hom_{D^b(Y_{\acute{e}t},\mathbb{Z}/p^n\mathbb{Z})}\left(\mathcal{K}, \nu_{Y|D,n}^{r-1}[-r-1]\right)=0$. Then there is no non-zero morphism from $\mathcal{K}$ to $ \nu_{Y|D,n}^{r-1}[-r-1]$. The uniqueness assertion follows from this fact and  Lemma 2.12 (3) in \cite{Sa2}.
\end{proof}
\begin{Def}
For $r \geq 1$, we fix a pair $(\mathcal{K}, t)$ fitting into a distinguished triangle of the form (\ref{tsyn dist}) and define $\mathcal{K}=:\mathfrak{T}_n(r)_{X|D}^{syn}$. 
\end{Def}
\begin{Lem}(Product structure of $\mathfrak{T}_n(r)_{X|mD}^{syn}$, cf. \cite{Sa2})
For $r, r' \geq 0$, there is a unique morphism
\begin{equation}
\{\mathfrak{T}_n(r)_{X|mD}^{syn}\}_m \otimes^{\mathbb{L}} \{\mathfrak{T}_n(r')_{X|mD}^{syn}\}_m\longrightarrow \{\mathfrak{T}_n(r+r')_{X|mD}^{syn}\}_m \quad in\;D^-(Y_{\acute{e}t}, \mathbb{Z}/p^n\mathbb{Z})
\end{equation}
\begin{proof}We show the assertion by the same argument in Prop 4.2.6 in \cite{Sa2}.
Put $\mathfrak{P}:=\mathfrak{T}_n(r)_{X|mD}^{syn} \otimes^{\mathbb{L}} \mathfrak{T}_n(r')_{X|mD}^{syn}$. By definition of $\mathfrak{T}_n(r+r')_{X|mD}^{syn} $ and Lemma 2.1.2 (1) in \cite{Sa2}, it suffices to show that the following composite morphism is zero in $D^-(Y_{\acute{e}t}, \mathbb{Z}/p^n\mathbb{Z})$:
\begin{equation*}
\mathfrak{P}\rightarrow s_n(r)_{X|mD}\otimes^{\mathbb{L}} s_n(r')_{X|mD}\rightarrow s_n(r+r')_{X|mD} \xrightarrow{\sigma_{X|D,n}(r+r')} \nu_{Y|D,n}^{r+r'-1}[-r-r'],
\end{equation*}
where the second arrow is product structure of $s_n(r)_{X|mD}$(Lemma \ref{Prod}).  Because $\mathfrak{P}$ is concentrated in degrees $[0, r+r']$, this composite morphism is determined by the composite map of the ($r+r'$)-th cohomology sheaves(Lemma 2.1.1 in \cite{Sa2}). Then we consider the following composition map:
{\footnotesize \begin{equation}
\mathcal{H}^{r+r'}(\mathfrak{P})\rightarrow \mathcal{H}^{r}(s_n(r)_{X|mD})\otimes \mathcal{H}^{r'}(s_n(r')_{X|mD})\rightarrow \mathcal{H}^{r+r'}(s_n(r+r')_{X|mD}) \xrightarrow{\sigma_{X|D,n}(r+r')} \nu_{Y|D,n}^{r+r'-1}.
\end{equation}}
The image of $\mathcal{H}^{r+r'}(\mathfrak{P})(\cong F\mathscr{M}^r_m \otimes F\mathscr{M}^{r'}_m )$ into $ \mathcal{H}^{r+r'}(s_n(r+r')_{X|mD})$ is contained in $ F\mathscr{M}^{r+r'}_m$. Hence this composite map is zero by $(\flat)$ in Theorem \ref{StrM} below.
\end{proof}
\end{Lem}
We define a $p$-adic \'etale Tate twist for a modulus pair $(X,D)$ :
\begin{Def}( $p$-adic \'etale Tate twist with modulus )\\
We define $\mathfrak{T}_n(r)_{X|D}$  by the following homotopy square:
\[
\quad\quad\quad\quad\quad\quad\xymatrix@M=17pt@C=87pt{   
 \mathfrak{T}_n(r)_{X|D}\ar@{}[rd]|{\square}\ar[d]\ar[r]& \tau_{\leq r}R\psi_*\mu_{p^n}^{\otimes r}\ar[d] &\\
 i_*\mathfrak{T}_n(r)_{X|D}^{syn}\ar[r]&i_*\mathcal{S}_n(r)_{(X,M_X)},&
 }   
\]
where the right vertical morphism is composite adjunction morphism and isomorphism in \cite{Tsu2}\;(here we use the assumpution $0 \leq r \leq p-2$)\;{\rm:}
\[\tau_{\leq r}R\psi_*\mu_{p^n}^{\otimes r}\xrightarrow{adj.} \tau_{\leq r}(i_*i^*R\psi_*\mu_{p^n}^{\otimes r})\cong i_*\mathcal{S}_n(r)_{(X,M_X)}.\]
\end{Def}
\noindent
Note that by the definition of $\mathfrak{T}_n(r)_{X|D}$, there is a distinguished triangle:
\begin{equation}\label{tsyn dist}
\mathfrak{T}_n(r)_{X|D} \rightarrow i_*\mathfrak{T}_n(r)_{X|D}^{syn}\oplus \tau_{\leq r}R\psi_*\mu_{p^n}^{\otimes r}    \rightarrow i_*\mathcal{S}_n(r)_{(X,M_X)} \rightarrow \mathfrak{T}_n(r)_{X|D}[1].
\end{equation}
When $D=\emptyset$, we write $\mathfrak{T}_n(r)_{X}$ for $\mathfrak{T}_n(r)_{X|\emptyset}$, which agrees with that in \cite{Sa2} \S4. We call $\mathfrak{T}_n(r)_{X|D}$ a $p$-adic \'etale Tate twist with modulus.\\

We have the cohomology sheaf of  $\mathfrak{T}_n(r)_{X|D}^{syn}$: 
\begin{Lem}
\begin{equation}
\mathcal{H}^q\Big(\mathfrak{T}_n(r)_{X|D}^{syn}\Big) \cong  
 \begin{cases} \mathcal{H}^q\big(s_n(r)_{X|D}\big)  &(0 \leq q \leq r-1) ,\\
  \Ker\Big(\mathcal{H}^r\big(s_n(r)_{X|D}\big)\xrightarrow{\mathcal{H}^r\big(\sigma_{X|D,n}^r\big)} \nu_{Y|D, n}^{r-1}  \Big)& (q=r).\\
 \end{cases}
 \end{equation}
\end{Lem}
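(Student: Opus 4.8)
The plan is to read off both cases directly from the long exact sequence of cohomology sheaves attached to the distinguished triangle defining $\mathcal{K}:=\mathfrak{T}_n(r)_{X|D}^{syn}$, namely
\[
\nu_{Y|D,n}^{r-1}[-r-1] \xrightarrow{g} \mathcal{K} \xrightarrow{t} s_n(r)_{X|D} \xrightarrow{\sigma_{X|D,n}(r)} \nu_{Y|D,n}^{r-1}[-r].
\]
Applying the cohomology-sheaf functors $\mathcal{H}^q(-)$ to this triangle yields, for every $q$, an exact sequence
\[
\mathcal{H}^q\big(\nu_{Y|D,n}^{r-1}[-r-1]\big)\to \mathcal{H}^q(\mathcal{K}) \xrightarrow{\mathcal{H}^q(t)} \mathcal{H}^q\big(s_n(r)_{X|D}\big)\to \mathcal{H}^{q+1}\big(\nu_{Y|D,n}^{r-1}[-r-1]\big).
\]
The single input needed is that $\nu_{Y|D,n}^{r-1}[-r-1]$ has cohomology concentrated in degree $r+1$: its degree-$q$ cohomology sheaf is $\nu_{Y|D,n}^{r-1}$ when $q=r+1$ and vanishes otherwise.

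For $0\leq q\leq r-1$, both flanking terms $\mathcal{H}^q(\nu_{Y|D,n}^{r-1}[-r-1])$ and $\mathcal{H}^{q+1}(\nu_{Y|D,n}^{r-1}[-r-1])$ vanish, since neither $q$ nor $q+1$ reaches $r+1$ in this range. Hence $\mathcal{H}^q(t)$ is an isomorphism and $\mathcal{H}^q(\mathcal{K})\cong \mathcal{H}^q(s_n(r)_{X|D})$, which is the first case. For $q=r$ the left term still vanishes while the right flanking term becomes $\nu_{Y|D,n}^{r-1}$, so the sequence reduces to
\[
0\to \mathcal{H}^r(\mathcal{K})\to \mathcal{H}^r\big(s_n(r)_{X|D}\big)\xrightarrow{\partial} \nu_{Y|D,n}^{r-1}\to \mathcal{H}^{r+1}(\mathcal{K}).
\]
By the functoriality of the cohomology-sheaf long exact sequence, the connecting map $\partial$ is exactly $\mathcal{H}^r(\sigma_{X|D,n}^r)$, the degree-$r$ cohomology of the third arrow $\sigma_{X|D,n}(r)$. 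Reading off the kernel gives $\mathcal{H}^r(\mathcal{K})\cong \Ker\big(\mathcal{H}^r(\sigma_{X|D,n}^r)\big)$, the second case; and since $\mathcal{H}^r(\sigma_{X|D,n}^r)$ is surjective by the definition of $\nu_{Y|D,n}^{r-1}$ as an image (as already exploited in the preceding Lemma), the term $\mathcal{H}^{r+1}(\mathcal{K})$ is forced to vanish, consistently with the established concentration of $\mathcal{K}$ in $[0,r]$.

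I do not expect a genuine obstacle here: the argument is a formal diagram chase through the long exact sequence. The only points demanding care are purely bookkeeping: correctly locating the shift, so that $[-r-1]$ places the single nonzero cohomology sheaf of $\nu_{Y|D,n}^{r-1}[-r-1]$ in degree $r+1$, and confirming via naturality that the boundary map at $q=r$ is literally $\mathcal{H}^r(\sigma_{X|D,n}^r)$ rather than a sign- or shift-twisted variant of it.
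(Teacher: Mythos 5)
Your proposal is correct and is exactly the argument the paper intends: the paper's proof simply invokes the long exact sequence of cohomology sheaves attached to the distinguished triangle defining $\mathfrak{T}_n(r)_{X|D}^{syn}$ and leaves the details to the reader, and your diagram chase (concentration of $\nu_{Y|D,n}^{r-1}[-r-1]$ in degree $r+1$, vanishing of the flanking terms for $q\leq r-1$, and identification of the boundary map at $q=r$ with $\mathcal{H}^r(\sigma_{X|D,n}^r)$) supplies precisely those details.
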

\begin{proof}
This follows form the long exact sequence of cohomology sheaves associated with a distingished triangle
(\ref{tsyn dist}). The details are streightforward and left to reader.
\end{proof}


\section{\textbf{Structure of $\Ker(\sigma_{X|mD,n}^r)$}}\mbox{}
 In \cite{Y}, Theorem 3.5, we have the theorem on the structure of $\mathcal{H}^r\big({s_n(r)}_{X|mD}\big)$.
We put $\mathscr{M}_{n,m}^r:=\mathcal{H}^r\big(s_n(r)_{X|mD}\big).$ 
We define the \'etale subsheaf $F\mathscr{M}_{n,m}^r$ of $\mathscr{M}_{n,m}^r$ as the part generated by the image of $U^1{\mathscr{M}^r_{n,m}}$ and $i^*\left(1+\mathscr{O}_X(-mD)\right)^{\times}\otimes i^*\left(\alpha_*\mathscr{O}_U^{\times}\right)^{\otimes r-1}$. Here $U^1 {\mathscr{M}^r_{n,m}}$ is defined in Definition 3.1, \cite{Y} as for \cite{Tsu1}. 

\begin{Thm}(cf. Theorem 3.4.2, \cite{Sa2})\label{StrM} The map $\sigma_{X|mD,n}^r$ induces an isomorphism 
\[(\flat)\quad{\{\mathscr{M}_{n,m}^r/F\mathscr{M}_{n,m}^r\}}_m \xrightarrow{\cong} {\{\nu^{r-1}_{Y|mD,n}\}}_m,\]
that is, ${\{F\mathscr{M}_{n,m}^r\}}_m = {\{\Ker(\sigma_{X|mD,n}^r)\}}_m$. Furthermore there is an exact sequence 
\[0 \longrightarrow {\{U^1{\mathscr{M}^r_{n,m}}\}}_m \longrightarrow {\{F\mathscr{M}_{n,m}^r\}}_m \longrightarrow {\{\lambda^{r}_{Y|mD}\}}_m \longrightarrow 0, \]
i.e. $(\flat\flat)\quad {\{F\mathscr{M}_{n,m}^r/U^1{\mathscr{M}^r_{n,m}}\}}_m \xrightarrow{\cong}  {\{\lambda^{r}_{Y|mD}\}}_m$. 
\end{Thm}
\begin{proof} By the Theorem 3.7 and 3.8 in \cite{Y}, we have $U^0\mathscr{M}_{n,m}^r=\mathscr{M}_{n,m}^r$ and a short exact sequence
\[0 \longrightarrow \{\gr^0_1\mathscr{M}_{n,m}^r\}_m \longrightarrow \{\gr^0_U\mathscr{M}_{n,m}^r\}_m \longrightarrow \{\gr^0_0\mathscr{M}_{n,m}^r\}_m \longrightarrow 0\]
as a pro-system. Then we have the natural adjunction map \[\{\mathscr{M}_{n,m}^r/U^1\mathscr{M}_{n,m}^r\}_m \longrightarrow \{\bigoplus_{y \in Y^0} i_{y*}{i_y}^* \left(\mathscr{M}_{n,m}^r/U^1\mathscr{M}_{n,m}^r \right)\}_m \] 
as a pro-system which is injective. We consider the following commutative diagram:
 {\footnotesize\[
\xymatrix@C=15pt@R=31pt{   
0\ar[r]&\{\Ker(\sigma_{X|D})/U^1\mathscr{M}_{n,m}^r\}_m\ar[r]\ar[d]^-{\tau_D}&\{\mathscr{M}_{n,m}^r/U^1\mathscr{M}_{n,m}^r\}_m\ar[r]\ar[d]&\bigoplus_{y \in Y^0}i_{y*}\Omega^{r-1}_{y,\log}\ar[r]\ar[d]^-{=}&0\\
0\ar[r]&\bigoplus_{y \in Y^0}i_{y*}\Omega^r_{y,\log}\ar[r]&\bigoplus_{y \in Y^0} i_{y*}{i_y}^* \{\left(\mathscr{M}_{n,m}^r/U^1\mathscr{M}_{n,m}^r \right)\}_m\ar[r]&\bigoplus_{y \in Y^0}i_{y*}\Omega^{r-1}_{y,\log}\ar[r]&0}
\]}
Here the bottom horizontal row induced by the direct decomposition {\footnotesize$\{\mathscr{M}_{n,m}^r/U^1\mathscr{M}_{n,m}^r\}_m \cong \{\Omega^r_{Y|mD_s,\log}\oplus \Omega^{r-1}_{Y|mD_s,\log}\}_m$} by Theorem 3.8 in \cite{Y} under the assumption Y is smooth over $\Spec(k)$.
Since the central map is an injective,  the map $\tau_D$ is also. We next  prove that the image of the map $\tau_D$ is contained in $\lambda^r_{Y|D}$. 
Here, we define the sheaf \[\lambda_{Y|D,n}^r:=\Imm\left((1+\mathscr{O}_Y(-D_s))^{\times} \otimes (\alpha'_*\mathscr{O}_V^{\times})^{\otimes r-1} \xrightarrow{{\rm d\log}}\displaystyle{ \bigoplus_{y \in V^0}} W_n\Omega_{y, \log}^{r} \right).\]
The exact sequence $(\mathfrak{V}2)$ in Proposition   below, we have an isomorphism \[\lambda^r_{Y|mD_s}\cong \Ker\left(a_{1*}\Omega^r_{Y^{(1)}|mD_s^{(1)}} \rightarrow a_{2*}\Omega^r_{Y^{(2)}|mD_s^{(2)}} \right).\] We consider the following commutative diagram as a pro-system\;:
 \[
\xymatrix@C=20pt@R=20pt{  
\Ker(\sigma_{X|D})\ar[r]\ar[d]&a_{1*}\Omega^r_{Y^{(1)}|mD_s^{(1)}}\ar[r]\ar[d]&a_{2*}\Omega^r_{Y^{(2)}|mD_s^{(2)}}\ar[d]\\
\Ker(\sigma_{(X,D)})\ar[r]&a_{1*}\Omega^r_{Y^{(1)}}\ar[r]&a_{2*}\Omega^r_{Y^{(2)}}}.
\]
Here the bottom horizontal row induced by Theorem  in \cite{Sa3}, which is a zero map. Since the right vertical map is injective, the upper horizontal row is zero map.
Therefore  we have an inclusion $\Imm(\tau_D) \subset \lambda^r_{Y|D_s}$. Finally, we show that the isomorphisms $(\flat)$ and $(\flat\flat)$. We have the following injective maps:
\[\left\{\frac{F\mathscr{M}_{n,m}^r}{U^1\mathscr{M}^r_{n,m}}\right\}_m\hookrightarrow \left\{\frac{\Ker(\sigma_{X|D})}{U^1\mathscr{M}_{n,m}^r}\right\}_m \hookrightarrow \left\{\lambda^r_{Y|mD_s}\right\}_m.\]
The sheaf $F\mathscr{M}_{n,m}^r/U^1\mathscr{M}^r_{n,m}$ is generated by symbols which form $i^*\left(1+\mathscr{O}_X(-mD)\right)^{\times}\otimes i^*\left(\alpha_*\mathscr{O}_U^{\times}\right)^{\otimes r-1}$. Since $\lambda^r_{Y|D_s}$ is generated by a symbol form $(1+\mathscr{O}_Y(-D_s))^{\times} \otimes (\alpha'_*\mathscr{O}_V^{\times})^{\otimes r-1}$, the two maps are bijective. Thus we have $\{F\mathscr{M}_{n,m}^r\}_m=\{\Ker(\sigma_{X|mD})\}_m$. This completes the proof.
\end{proof}

\begin{Rem}When $D=\emptyset$ in Theorem , we have Theorem {\rm 3.4.2}, \cite{Sa2}.
\end{Rem}

\section{\textbf{Logarithmic Hodge-Witt type sheaves with modulus}}
In this section, we prove the structure and the duality theorem of $\nu^{r-1}_{Y|D_s,n}$ and introduce a sheaf $\lambda^r_{Y,n}$. The duality of $\nu^{r-1}_{Y|D_s,n}$ and $\lambda^r_{Y,n}$ plays an important role in our proof of the main result(see \S below). 
\begin{Lem} If $Y$ is smooth, we have $\nu^{r-1}_{Y|D_s,n}\cong W_n\Omega_{Y|D_s,\log}^{r-1}$ with respect to the multiplicity of the irreducible components of $D$.\end{Lem}
\begin{proof} We consider the following commutative diagram\;(as a pro-system):
 \[
\xymatrix@M=1.6pt@C=30pt{   
 i^*R^{r}\psi_*\mu_{p}^{\otimes r}\ar[r]^-{\phi_2}&\left(i^*R^{r}\psi_*\mu_{p^n}^{\otimes r}\right)/U^1(i^*R^{r}\psi_*\mu_{p^n}^{\otimes r})\ar[r]^-{\phi_3}&\alpha_*W_n\Omega^{r-1}_{V,\log}\\
\mathcal{H}^r(s_n(r)_{X|D})\ar[r]^-{\phi'_2}\ar[u]^-{\phi_1}&\left(\mathcal{H}^r(s_n(r)_{X|D})\right)/U^1\mathcal{H}^r(s_n(r)_{X|D})\ar[r]^-{\phi'_3}\ar[u]&W_n\Omega^{r-1}_{Y|D_s,\log}\ar[u]^-{\phi'_1}
 }   
\]
Here maps $\phi_3$ is obtained in Theorem   in \cite{Sa3}\;(under the assumption $Y$ is smooth). The map $\phi'_3$ is surjective by the induction on $n$ with applying Theorem 3.8 (1) in \cite{Y}.  By the definition, we have $\Imm(\phi_3\circ\phi_2\circ \phi_1)=\nu^{r-1}_{Y|D_s,n}$.  The image of the map $\phi'_1\circ \phi'_3 \circ \phi'_2$ is $W_n\Omega_{Y|D_s,\log}^{r-1}$. Then we obtain the assertion.
\end{proof}
\begin{Prop}\label{nu} Assume that $|D| \cap Y^0=\emptyset$. We have a short exact sequence 
\[0 \longrightarrow \nu^{r-1}_{Y|D_s,n} \longrightarrow \nu^{r-1}_{Y,n} \longrightarrow \beta'_*\nu^{r-1}_{D_s,n} \longrightarrow 0.\]
Here the last map is a restriction.
\end{Prop}
\begin{proof} We put $\mathcal{K}_{Y|D_s}^{r-1}:=\Ker( \nu^{r-1}_{Y,n} \longrightarrow \beta'_*\nu^{r-1}_{D_s,n})$. By the definiton of $\nu^{r-1}_{Y|D_s,n}$ and the assumption of $|D| \cap Y^0=\emptyset$, we have $ \nu_{(Y, |D|\cap Y),n}^{r-1}= \nu_{Y, n}^{r-1}$. Let $Y^{{\rm sing}}$ be the singular locus of $Y$ and let $\tilde{j}:\tilde{Y}:=Y\backslash Y^{{\rm sing}} \hookrightarrow Y$ be the open immersion. We define $\tilde{D}_s:=D_s \times_Y \tilde{Y}$. There is a commutative diagram of exact rows:
 \[
\xymatrix@M=1.6pt@C=30pt{   
0 \ar[r]&\mathcal{K}_{Y|D_s}^{r-1} \ar[r]\ar[d] & \nu^{r-1}_{Y,n}  \ar[r]\ar[d]& \beta'_*\nu^{r-1}_{D_s,n}\ar[d]\ar[r] &0\\
0 \ar[r]&  j_{2*}W_n\Omega_{\tilde{Y}|\tilde{D}_s, \log}^{r-1} \ar[r] & j_{2*}W_n\Omega_{\tilde{Y}, \log}^{r-1}\ar[r] &  j'_{2*}W_n\Omega_{\tilde{D}_s, \log}^{r-1}.&
 }   
\]Here the injectivity of the middle vertical (embedded) map and the right vertical map are due to \cite{Sa1}.
Then we have a relation $(\bigstar)\;\mathcal{K}_{Y|D_s}^{r-1}= \nu^{r-1}_{Y,n} \cap j_{2*}W_n\Omega_{\tilde{Y}|\tilde{D}_s, \log}^{r-1}\supset  \nu^{r-1}_{Y|D_s,n} \cap j_{2*}W_n\Omega_{\tilde{Y}|\tilde{D}_s, \log}^{r-1}=  \nu^{r-1}_{Y|D_s,n}$ by using the embedding $(*1)$ in Definition \ref{embed} below.
There is a commutative diagram:
 \[
\xymatrix@M=1.6pt@C=30pt{   
0 \ar[r]& \lambda^r_{Y,n} \ar[r]\ar[d] & M^r_n/U^1M^r_n \ar[r]^-{f}\ar[d] ^-{\cong}& \nu^{r-1}_{Y,n} \ar[d]^-{g}\ar[r] &0\\
0 \ar[r]& \Ker(\theta) \ar[r] & M^r_n/U^1M^r_n \ar[r]^-{\theta} &  \nu^{r-1}_{Y,n}/ \nu^{r-1}_{Y|D_s,n}\ar[r]&0.
 }   
\]
Here we define $\theta:=g\circ f$. The upper horizontal exact row is from Theorem 3.4.2 in \cite{Sa2}. By this diagram, we have a short exact sequence
 \[
\xymatrix@M=2pt@C=30pt{   
0 \ar[r]& \lambda^r_{Y.n} \ar[r]& \Ker(\theta) \ar[r] & \nu^{r-1}_{Y|D_s,n} \ar[r] &0. 
 }   
\]
We consider the following commutative diagram of exact sequences:
 \[
\xymatrix@M=3pt@C=20pt{   
&0\ar[d]&0\ar[d]\\
& \Ker(\tau) \ar[r]\ar[d]&  \lambda^{r}_{Y,n}\ar[d]\\
0 \ar[r]& \Ker(\theta) \ar[r]\ar[d]^-{\tau} & M^r_n/U^1M^r_n \ar[r]^-{\theta}\ar[d]^-{f} & \nu^{r-1}_{Y,n}/ \nu^{r-1}_{Y|D_s,n}\ar[r]\ar[d]^-{f'}&0\\
0 \ar[r]&\mathcal{K}^{r-1}_{Y|D_s} \ar[r] \ar[d]& \nu^{r-1}_{Y,n}\ar[r] \ar[d]& \beta'_*\nu^{r-1}_{D_s,n} \ar[r]&0\\
&0&0.&&
 }   
\]
The map $f'$ is defined as the composition $ \nu^{r-1}_{Y,n}/ \nu^{r-1}_{Y|D_s,n}\longrightarrow  \nu^{r-1}_{Y,n}/\mathcal{K}_{Y|D_s}^{r-1} \cong  \beta'_*\nu^{r-1}_{D_s,n}$. The well-definedness of this map is from $(\bigstar)$. By the fact that $\Ker(\tau)\cong\lambda^{r}_{Y,n}$, we have $\mathcal{K}_{Y|D_s}^{r-1}= \nu^{r-1}_{Y|D_s,n}$.  
\end{proof}
We will investigate the sheaf $\nu_{Y|D_s,n}^{r-1}$ by embedding into a differential module of a smooth variety as \S2.5 in \cite{Sa1}.
\begin{Def}\label{embed} We have a composite map: 
\[(*1)\quad \nu_{Y|D_s,1}^{r} \hookrightarrow \tilde{j}_*\tilde{j}^{-1}\nu_{Y|D_s,1}^{r}=\tilde{j}_*\Omega^r_{\tilde{Y}|\tilde{D}_s, \log} \hookrightarrow \tilde{j}_*\Omega^r_{\tilde{Y}|\tilde{D}_s}.\]By this embedding map $(*1)$, we define the \'etale sheaf  $\Xi^r_{Y|D_s}$ on $Y$ as the $\mathscr{O}_Y$-submodule of $\tilde{j}_*\Omega^r_{\tilde{Y}|\tilde{D}_s}$ generated by local sections of $\nu_{Y|D_s,1}^{r}$. When $D_s=\emptyset$, the \'etale sheaf $\Xi^r_{Y|\emptyset}$ is the same as Sato's differential module $\Xi^r_{Y}$ of Definiton 2.5.1 in \cite{Sa1}.
\end{Def} 
 \begin{Lem}\label{shex}(cf. Lemma 2.4.6, \cite{Sa1}) We define \[\mathcal{Z}^{r+1}_{\mathscr{Y}|\mathscr{D}}(\log Y):=\Ker\left(d: \Omega^{r+1}_{\mathscr{Y}|\mathscr{D}}(\log Y) \rightarrow \Omega^{r+2}_{\mathscr{Y}|\mathscr{D}}(\log Y)\right).\] Then there exists a short exact sequence on $\mathscr{Y}_{\acute{e}t}$:
 \[0 \longrightarrow j_*\Omega^{r+1}_{\mathscr{U}|(\mathscr{D}\backslash D_s), \log} \longrightarrow \mathcal{Z}^{r+1}_{\mathscr{Y}|\mathscr{D}}(\log Y) \xrightarrow{1-C} \Omega^{r+1}_{\mathscr{Y}|\mathscr{D}}(\log Y) \longrightarrow 0. \] 
 Here we put $\mathscr{U}:=\mathscr{Y}\backslash Y$.
 \end{Lem}
 \begin{proof} We have a commutative diagram with exact sequences:
 \[
\xymatrix@M=5pt@C=20pt{ 
&0\ar[d]&0\ar[d]&0\ar[d]&\\
0 \ar[r]& \Ker(1-C) \ar[r]\ar[d] & \mathcal{Z}^{r+1}_{\mathscr{Y}|\mathscr{D}}(\log Y)\ar[d]\ar[r]^-{1-C} &\Omega^{r+1}_{\mathscr{Y}|\mathscr{D}}(\log Y)\ar[r]\ar[d] &0\\
0\ar[r]&j_*\Omega^{r+1}_{\mathscr{U}, \log}\ar[d] \ar[r]&  \mathcal{Z}^{r+1}_{\mathscr{Y}}(\log Y)\ar[r]^-{1-C} \ar[d]&\Omega^{r+1}_{\mathscr{Y}}(\log Y)\ar[r]\ar[d]&0\\
0\ar[r]&\beta_*j_{D*}\Omega^{r+1}_{\mathscr{D}\backslash D_s, \log} \ar[r]\ar[d]& \beta_* \mathcal{Z}^{r+1}_{\mathscr{D}}(\log D_s)\ar[r]^-{1-C} \ar[d]&\beta_*\Omega^{r+1}_{\mathscr{D}}(\log D_s)\ar[r]\ar[d]&0\\
&0&0&0&}
\]
Here the two lower rows are from Lemma 2.4.6 in \cite{Sa1}. Since $\beta_*j_{D*}\Omega^{r+1}_{\mathscr{D}\backslash D_s, \log}\cong j_{*}\beta'_*\Omega^{r+1}_{\mathscr{D}\backslash D_s, \log}$, we have $ \Ker(1-C) \cong  j_*\Omega^{r+1}_{\mathscr{U}|(\mathscr{D}\backslash D_s), \log} $. Thus we have the assertion. \end{proof}

 \begin{Prop}\label{Poincare}(cf. \cite{Sa1}, Lemma 2.5.3)
 Let $Y$ be a simple and embedded into a smooth variety $\mathscr{Y}$ over $\Spec(k)$ as a simple normal crossing divisor.
 Let $\iota: Y \hookrightarrow \mathscr{Y}$ the closed immersion. Then we have an $\iota^{-1}\mathscr{O}_{\mathscr{Y}}$-linear isomorphism
 \[(\P)\quad\iota^{-1}\left(\Omega^{n+1}_{\mathscr{Y}|\mathscr{D}}(\log Y)/ \Omega^{n+1}_{\mathscr{Y}|\mathscr{D}}\right) \xrightarrow{\cong} \Xi^n_{Y|D_s}\]
 which induced by the Poincar\'e residue map
 \[\mathfrak{P}_{Y|D_s}: \Omega^{n+1}_{\mathscr{Y}|\mathscr{D}}(\log Y) \longrightarrow \iota_*j_{2*}\Omega^n_{Y|D_s}.\]
 
 \end{Prop}
 \begin{proof}We prove this Proposition by the same way as Lemma 2.5.3 in \cite{Sa1}.  We have a short exact sequences
\[0 \longrightarrow \Omega^r_{\mathscr{Y}, \log}\longrightarrow  \Omega^r_{(\mathscr{Y}, Y), \log}\longrightarrow \iota_*\nu^{r-1}_{Y,1} \longrightarrow 0,  \]
\[0 \longrightarrow \beta_*\Omega^r_{\mathscr{D}, \log}\longrightarrow  \Omega^r_{(\mathscr{D}, D_s), \log}\longrightarrow \iota_*\beta'_*\nu^{r-1}_{D_s,1} \longrightarrow 0.\]
by \cite{Sa3}, p.200. Then we have a short exact sequence
\[(*1)\quad\quad 0 \longrightarrow \Omega^r_{\mathscr{Y}|\mathscr{D}, \log}\longrightarrow  j_*\Omega^r_{\mathscr{U}|(\mathscr{D}\backslash D_s), \log}\xrightarrow{\mathfrak{r}} \iota_*\nu^{r-1}_{Y|D_s,1} \longrightarrow 0. \]
Here $j:\mathscr{U}:=\mathscr{Y}\backslash Y \hookrightarrow \mathscr{Y}$. We prove that  $\Imm(\mathfrak{P}_{Y|D_s})=\iota_*\Xi^n_{Y|D_s}$. We consider the following  anti-commutative diagram which considered in \cite{Sa1} under no modulus condition($D=\emptyset$):
\[
\xymatrix@M=5pt@C=50pt{ 
\iota^{-1}\mathscr{O}_{\mathscr{Y}} \otimes \iota^{-1} j_*\Omega^r_{\mathscr{U}|(\mathscr{D}\backslash D_s), \log} \ar[r]^-{\mathfrak{R}}\ar[d]& \mathscr{O}_Y \otimes \nu^{r-1}_{Y|D_s,1}\ar[d]\\
\iota^{-1}\Omega^{r+1}_{\mathscr{Y}|\mathscr{D}}(\log Y)\ar[r]^-{\iota^{-1}(\mathfrak{P}_{Y|D_s})}&j_{2*}\Omega^r_{Y|D_s}.
} 
\]
We denote by $\overline{\alpha}$ the residue class of $\alpha \in \iota^{-1}\mathscr{O}_{\mathscr{Y}}$. The upper horizontal map $\mathfrak{R}$ is defined by sending a local section $\alpha \otimes \beta$ to $\overline{\alpha}\otimes \mathfrak{r}(\beta)$, which is surjective.  Lemma \ref{shex} induces the left vertical map. The right vertical map is a product map. The image of this product map is $\Xi^n_{Y|D_s}$. By the surjectivity of the top horizontal map and left vertical map, we have $\Imm(\mathfrak{P}_{Y|D_s})\cong \iota_*\Xi^n_{Y|D_s}$. One can show that the invectivity of the map $(\P)$ by the same way as Lemma 2.5.3 in \cite{Sa1}. \end{proof}

We next introduce a differential sheaf which is defined for a modulus pair $(X,D)$.
\begin{Def}
We define 
\[\lambda_{Y|D_s,n}^r:=\Imm\left((1+\mathscr{O}_Y(-D_s))^{\times} \otimes (\alpha'_*\mathscr{O}_V^{\times})^{\otimes r-1} \xrightarrow{d\log}\displaystyle{ \bigoplus_{y \in V^0}} W_n\Omega_{y, \log}^{r} \right).\] When $D=\emptyset$, we have $\lambda^r_{Y|\emptyset,n}=\lambda^r_{Y,n}$, where $\lambda^r_{Y,n}$ is defined in \cite{Sa1}. We denote by $\mathbb{G}_{m, Y|D_s}^{\otimes r}:=(1+\mathscr{O}_Y(-D_s))^{\times} \otimes (\alpha'_*\mathscr{O}_V^{\times})^{\otimes r-1}$ for simplycity.
\end{Def}
We first consider the structure of $\lambda^r_{Y|D_s}$. We apply Lemma 3.2.2 in \cite{Sa1} to the sheaf $\mathcal{F}=W_n\Omega^r_{\bullet|(\bullet \times D_s), \log}$, $Z=\mathscr{Y}$, we obtain an exact sequence by using the fact that $W_n\Omega^r_{Y^{(q),\log}}=0$ for $q>d+1-r$:
\begin{align*}(\mathfrak{V}1):\;\iota^{-1}W_n\Omega_{\mathscr{Y}|\mathscr{D},\log}^r \longrightarrow &a_{1*}W_n\Omega^r_{Y^{(1)}|D^{(1)}_s,\log} \longrightarrow\\
& \cdots \longrightarrow a_{d+1-r*}W_n\Omega^r_{Y^{(d+1-r)}|D^{(d+1-r)}_s,\log}\longrightarrow 0.\end{align*}
Here $D_s^{(q)}$ is defined by $D_s^{(q)}:=D_s\times_YY^{(q)}$. See \S1.8 of \cite{Sa1} about the notation $Y^{(q)}$\;($q \geq 1$).
There is a commutative diagram:
\[
\xymatrix@M=5pt@C=50pt{ \iota^{-1}\mathbb{G}_{m,\mathscr{Y}|\mathscr{D}}^{\otimes r} \ar[r]\ar[d]^-{d\log} &\mathbb{G}_{m, Y|D_s}^{\otimes r}\ar[d]^-{d\log}\\
\iota^{-1}W_n\Omega_{\mathscr{Y}|\mathscr{D},\log}^r\ar[r]&a_{1*}W_n\Omega^r_{Y^{(1)}|D^{(1)}_s,\log}.
} 
\]
Here the upper horizontal map is surjective by Proposition 2.10 in \cite{RS}. The left vertical map are surjective by Theorem 1.1.5 in \cite{JSZ}; the left vertical arrow is a composition of maps
\[\iota^{-1}\mathbb{G}_{m,\mathscr{Y}|\mathscr{D}}^{\otimes r} \twoheadrightarrow \iota^{-1}\mathcal{K}^M_{r, \mathscr{Y}|\mathscr{D}}/\left(p^m\mathcal{K}^M_{r, \mathscr{Y}}\cap \mathcal{K}^M_{r, \mathscr{Y}|\mathscr{D}}\right)\xrightarrow{\cong} \iota^{-1}W_n\Omega_{\mathscr{Y}|\mathscr{D},\log}^r\]\;(the last isomorphism is due to Theorem 1.1.5 in \cite{JSZ}). By the definition of $\lambda^r_{Y|D_s,n}$, we have
 \[\lambda^r_{Y|D_s,n}=\Imm\left(\iota^{-1}W_n\Omega_{\mathscr{Y}|\mathscr{D},\log}^r \longrightarrow a_{1*}W_n\Omega^r_{Y^{(1)}|D^{(1)}_s,\log}\right).\]
 Then we have the following exact sequence by $(\mathfrak{V}1)$:
\begin{align*}(\mathfrak{V}2):\;0\longrightarrow \lambda^r_{Y|D_s,n}\longrightarrow & a_{1*}W_n\Omega^r_{Y^{(1)}|D^{(1)}_s,\log} \longrightarrow \cdots\\
& \longrightarrow a_{d+1-r*}W_n\Omega^r_{Y^{(d+1-r)}|D^{(d+1-r)}_s,\log}\longrightarrow 0.\end{align*}Taking $D=\emptyset$ in $(\mathfrak{V}2)$, we have Proposition 3.2.1 in \cite{Sa1}.
  \begin{Def}\label{embed;lambda} We have a composite map: 
\[(*1)\quad \lambda_{Y|D_s,1}^{r} \hookrightarrow \tilde{j}_*\tilde{j}^{-1}\lambda_{Y|D_s,1}^{r}=\tilde{j}_*\Omega^r_{\tilde{Y}|\tilde{D}_s, \log} \hookrightarrow \tilde{j}_*\Omega^r_{\tilde{Y}|\tilde{D}_s}.\]By this embedding map $(*1)$, we define the \'etale sheaf  $\Lambda^r_{Y|D_s}$ on $Y$ as the $\mathscr{O}_Y$-submodule of $\tilde{j}_*\Omega^r_{\tilde{Y}|\tilde{D}_s}$ generated by local sections of $\lambda_{Y|D_s,1}^{r}$. When $D_s=\emptyset$, the \'etale sheaf $\Lambda^r_{Y|\emptyset}$ is the same as Sato's differential module $\Lambda^r_{Y}$ of Definiton 3.3.1 in \cite{Sa1}.
\end{Def}

\begin{Lem}\label{Lem1}(cf.\;\cite{JSZ}, Theorem 1.1.6, \cite{Sa1}, Corollary 2.2.5 (2))
There are short exact sequences \begin{equation}\label{Eq1} 0 \longrightarrow \nu_{Y|[D_s/p],n-1}^{r-1} \xrightarrow{\underline{p}}  \nu_{Y|D_s,n}^{r-1}\xrightarrow{\mathcal{R}^{n-1}}   \nu_{Y|D_s,1}^{r-1} \longrightarrow 0, \end{equation}
\begin{equation}\label{Eq2} 0 \longrightarrow \lambda_{Y|[D_s/p],n-1}^{r-1} \xrightarrow{\underline{p}}  \lambda_{Y|D_s,n}^{r-1}\xrightarrow{\mathcal{R}^{n-1}}   \lambda_{Y|D_s,1}^{r-1} \longrightarrow 0 \end{equation}
Here $\mathcal{R}$ is the natural projection operator and $\underline{p}$ is the unique map such that $\underline{p}\circ \mathcal{R}=p$\;($p$ denotes a multiplication by $p$) and we put $[D_s/p]:=\sum_{\lambda \in \Lambda}[n_{\lambda}/p](D_s)_{\lambda}$ with $[n_{\lambda}/p]:=\min\{n\in\mathbb{Z}\mid pn\geq n_{\lambda} \}$\;($D_s:=\sum_{\lambda \in \Lambda}n_{\lambda}(D_s)_{\lambda}$)(This notation is the same as \cite{JSZ}).
\end{Lem}
\begin{proof}  
We first show that the short exact sequence $(\ref{Eq1})$. There is a commutative diagram
 \[
\xymatrix@M=2pt@C=40pt{ 
&0\ar[d]&0\ar[d]&0\ar[d]&\\
0 \ar[r]& W_{n-1}\Omega^r_{\mathscr{Y}|[\mathscr{D}/p],\log} \ar[r]^-{\underline{p}}\ar[d] &W_{n}\Omega^r_{\mathscr{Y}|\mathscr{D},\log}\ar[d]\ar[r]^-{\mathcal{R}^{r-1}} &W_1\Omega^r_{\mathscr{Y}|\mathscr{D},\log}\ar[r]\ar[d] &0\\
0\ar[r]&W_{n-1}\Omega^r_{\mathscr{Y},\log}\ar[d] \ar[r]^-{\underline{p}}& W_{n}\Omega^r_{\mathscr{Y},\log}\ar[r]^-{\mathcal{R}^{r-1}} \ar[d]& W_{1}\Omega^r_{\mathscr{Y},\log}\ar[r]\ar[d]&0\\
0\ar[r]& W_{n-1}\Omega^r_{[\mathscr{D}/p],\log} \ar[r]^-{\underline{p}}\ar[d]& W_{n}\Omega^r_{\mathscr{D},\log}\ar[r]^-{\mathcal{R}^{r-1}} \ar[d]& W_{1}\Omega^r_{\mathscr{D},\log}\ar[r]\ar[d]&0\\
&0&0&0.&}
\]
Here the upper horizontal row is given by \cite{JSZ}, Theorem 1.1.6, and the middle horizontal row is given by \cite{CTSS}, p.779, Lemma 3. Then the lower horizontal row is exact.
By the definition of $\nu^{r}_{D_s,m}$, we obtain a short exact sequence:
\[0 \longrightarrow \nu^{r}_{[D_s/p],n-1} \xrightarrow{\underline{p}} \nu^{r}_{D_s,n} \xrightarrow{\mathcal{R}^{r-1}} \nu^{r}_{D_s,1} \longrightarrow 0. \] 
We consider the following commutative diagram:
 \[
\xymatrix@M=3pt@C=40pt{ 
&0\ar[d]&0\ar[d]&0\ar[d]&\\
0 \ar[r]&\nu^r_{Y|[D_s/p],n-1} \ar[r]^-{\underline{p}}\ar[d] &\nu^r_{Y|D_s,n} \ar[d]\ar[r]^-{\mathcal{R}^{r-1}} & \nu^{r}_{Y|D_s,1}\ar[r]\ar[d] &0\\
0\ar[r]& \nu^{r}_{Y,n-1}\ar[d] \ar[r]^-{\underline{p}}& \nu^{r}_{Y,n} \ar[r]^-{\mathcal{R}^{r-1}} \ar[d]&  \nu^{r}_{Y,1}\ar[r]\ar[d]&0\\
0\ar[r]&  \nu^{r}_{[D_s/p],n-1} \ar[r]^-{\underline{p}}\ar[d]&  \nu^{r}_{D_s,n} \ar[r]^-{\mathcal{R}^{r-1}} \ar[d]&  \nu^{r}_{D_s,1}\ar[r]\ar[d]&0\\
&0&0&0,&}
\]
where the exactness of the middle horizontal row is given by \cite{Sa1}, Corollary 2.2.5 (2), and the vertical rows are exact by Proposition \ref{nu}.
Thus the upper horizontal row is an exact.  The exactness of $(\ref{Eq2})$ is given by  \cite{JSZ}, Theorem 1.1.6 and the exact sequence $(\mathfrak{V}2)$ of $\lambda^r_{Y|D_s,n}$.
This completes the proof of the assertion.\end{proof}

 We have the following duality which is a modulus version of Proposition 3.3.5 in \cite{Sa1}.
\begin{Prop}\label{dual}(cf. Proposition 3.3.5, \cite{Sa1})
Assume that $0 \leq m \leq d$. Let 
\[\Xi^m_{Y|D_s} \times \Lambda^{d-m}_{Y|-D_s} \longrightarrow \Xi^d_{Y}\quad\hbox{and}\quad \Lambda^m_{Y|D_s} \times \Xi^{d-m}_{Y|-D_s} \longrightarrow \Xi^d_{Y}\]
are the $\mathscr{O}_Y$-bilinear pairing obtained by the pairing $\nu_{Y|mD_s,1}^r \otimes \alpha'_*\lambda^{d-r}_{V,1} \longrightarrow \nu_{Y,1}^d$ which is defined below. Here $\Xi^r_{Y|-D_s}:=\Xi^r_{Y}\otimes \mathscr{O}_Y(D_s)$ and $\Lambda^r_{Y|-D_s}:=\Lambda^r_{Y}\otimes \mathscr{O}_Y(D_s)$.
Then the induced morphisms
\[(a1):\;\Lambda^{d-m}_{Y|-D_s} \longrightarrow {\rm R}\mathcal{H}om_{\mathscr{O}_Y}(\Xi^m_{Y|D_s},\; \Xi^d_{Y})  \]
\[(a2):\;\Xi^{d-m}_{Y|-D_s} \longrightarrow {\rm R}\mathcal{H}om_{\mathscr{O}_Y}(\Lambda^m_{Y|D_s},\; \Xi^d_{Y})  \]
are isomorphisms.
\end{Prop}
\begin{proof} We use the same argument of Proposition 3.3.5 in \cite{Sa1}. This problem is \'etale local on $Y$, we may suppose that $Y$ is embedded into a smooth variety $\mathscr{Y}$ over $s$ as a simple normal crossing divisor.
  We put $\iota: Y \hookrightarrow \mathscr{Y}$, $f: Y \rightarrow s$ and $h: \mathscr{Y} \rightarrow s$. Let $\mathscr{I}_Y$ be the definition ideal of $Y$. 
  We have a short exact sequence \[0 \longrightarrow \Omega_{\mathscr{Y}|-\mathscr{D}}^m(-\log Y) \longrightarrow\  \Omega_{\mathscr{Y}|-\mathscr{D}}^m \longrightarrow \iota_*\Lambda^m_{Y|-D_s} \longrightarrow 0\]
 by Theorem in \cite{Sa1}.  
 Here we put $\Omega_{\mathscr{Y}|-\mathscr{D}}^m(-\log Y):=\Omega_{\mathscr{Y}|-\mathscr{D}}^m(\log Y) \otimes_{\mathscr{O}_{\mathscr{Y}}} \mathscr{I}_Y$. 
 There are the $\mathscr{O}_{\mathscr{Y}}$-perfect pairings of locally free $\mathscr{O}_{\mathscr{Y}}$-modules
 \[\Omega^{m+1}_{\mathscr{Y}|\mathscr{D}}\times \Omega^{d-m}_{\mathscr{Y}|-\mathscr{D}} \longrightarrow \Omega^{d+1}_{\mathscr{Y}},\]
  \[\Omega^{m+1}_{\mathscr{Y}|\mathscr{D}}(\log Y)\times \Omega^{d-m}_{\mathscr{Y}|-\mathscr{D}}(-\log Y) \longrightarrow \Omega^{d+1}_{\mathscr{Y}}.\]
  We have an isomorphismsc
  \begin{align*}
  &R\iota_*{\rm R}\mathcal{H}om_{\mathscr{O}_Y}(\Xi^m_{Y|D_s},\; \Xi^d_{Y})\\
  & \cong  R\iota_*{\rm R}\mathcal{H}om_{\mathscr{O}_Y}(\Xi^m_{Y|D_s},\; f^{H}\mathscr{O}_s[-d]) \\
  & =R\iota_*{\rm R}\mathcal{H}om_{\mathscr{O}_Y}(\Xi^m_{Y|D_s},\; \iota^{H}h^{H}\mathscr{O}_s[-d])\\
  & = {\rm R}\mathcal{H}om_{\mathscr{O}_{\mathscr{Y}}}(\iota_*\Xi^m_{Y|D_s},\; h^{H}\mathscr{O}_s[-d])\\
  &\cong {\rm R}\mathcal{H}om_{\mathscr{O}_{\mathscr{Y}}}(\Omega^{m+1}_{\mathscr{Y}|\mathscr{D}}(\log Y)/\Omega^{m+1}_{\mathscr{Y}|\mathscr{D}},\; \Omega^{d+1}_{\mathscr{Y}})[1]\\
  &\cong \Omega^{m+1}_{\mathscr{Y}|-\mathscr{D}}/\Omega^{m+1}_{\mathscr{Y}|-\mathscr{D}}(-\log Y)=\iota_*(\Lambda^{m}_Y\otimes \mathscr{O}_{Y}(D_s)).
  \end{align*}
  Here we use a natural isomorphism (3.3.6) in the proof of Proposition 3.3.5 in \cite {Sa1} and projection formula\;(see Exercise 5.1 (d) in \cite{Har}) of $\iota_*$ for last equality. Then we obtain an isomorphism $(a1)$.
 We have an isomorphism $(a2)$ by the same argument. This completes the proof.
\end{proof}

The differential operator $d: \Omega^r_Y \rightarrow \Omega^{r+1}_Y$ induces a differential operators $d^r:\Xi^r_{Y|D_s,n} \rightarrow \Xi^{r+1}_{Y|D_s,n}$ and $d^r:\Lambda^r_{Y|D_s,n} \rightarrow \Lambda^{r+1}_{Y|D_s,n}$(cf.\;Corollary 2.5.6 and Lemma 3.3.2 in \cite{Sa1}). Then we can consider $Z\Xi^r_{Y|D_s}:=\Ker(d^r)$, $Z\Lambda^r_{Y|D_s}:=\Ker(d^r)$, $B\Xi^r_{Y|D_s}:=\Imm(d^{r-1})$, and $B\Lambda^r_{Y|D_s}:=\Imm(d^{r-1})$.

\begin{Lem}\label{Z/B}(cf.\;\cite{Sa1}, Lemma 2.5.7 and Lemma 3.3.4)\\
We have a Cartier isomorphisms
\[\hbox{{\rm (1)}}\quad C: Z\Xi^r_{Y|D_s}/B\Xi^r_{Y|D_s} \xrightarrow{\cong} \Xi^r_{Y|[D_s/p]}, \]
\[\hbox{{\rm (2)}}\quad C: Z\Lambda^r_{Y|D_s}/B\Lambda^r_{Y|D_s} \xrightarrow{\cong} \Lambda^r_{Y|[D_s/p]}\]
which are induced by the Cartier isomorphism $Z^n_Y/B^n_Y \cong \Omega^n_{Y}$.
\end{Lem}
\begin{proof}
The proofs are essentially same of Lemma 2.5.7 and Lemma 3.3.4 in \cite{Sa1}. We first show $(1)$. We assume that $Y$ is simple (if any irreducible component is smooth over $k$; this is the terminology of \cite{Sa1}) and embedded into a smooth variety $\mathscr{Y}/k$ as a simple normal crossing divisor since the problem is \'etale local. We put $\iota$ the closed immersion $\iota:Y \hookrightarrow \mathscr{Y}$.  We have an exact sequence of complexes
\[0 \longrightarrow \Omega^{\bullet}_{\mathscr{Y}|\mathscr{D}} \longrightarrow  \Omega^{\bullet}_{\mathscr{Y}|\mathscr{D}}(\log Y)\xrightarrow{\mathfrak{P}_{Y|D_s}} \iota_*\Xi^{\bullet-1}_{Y|D_s}\longrightarrow 0 \]
by Proposition 4.5. Then we taking the cohomology of this sequence, we get the following short exact sequence:
\[0\rightarrow\mathcal{H}^{q+1}\left(\Omega^{\bullet}_{\mathscr{Y}|\mathscr{D}}\right)\rightarrow  \mathcal{H}^{q+1}\left(\Omega^{\bullet}_{\mathscr{Y}|\mathscr{D}}(\log Y)\right)\rightarrow  \iota_*\mathcal{H}^{q}\left(\Xi^{\bullet}_{Y|D_s}\right) \rightarrow 0.\]
There are Cartier isomorphisms\;(see \cite{RS}, Theorem 2.16):
\[C:\mathcal{H}^{q+1}\left(\Omega^{\bullet}_{\mathscr{Y}|\mathscr{D}}\right) \xrightarrow{\cong} \Omega^{q+1}_{\mathscr{Y}|[\mathscr{D}/p]},\;C: \mathcal{H}^{q+1}\left(\Omega^{\bullet}_{\mathscr{Y}|\mathscr{D}}(\log Y)\right)\xrightarrow{\cong}  \Omega^{q+1}_{\mathscr{Y}|[\mathscr{D}/p]}(\log Y).\]
Hence we obtain an isomorphism in (1)\;:$\mathcal{H}^{q+1}\left(\Xi_{Y|D_s}^{\bullet}\right) \xrightarrow{\cong} \Xi_{Y|[D_s/p]}^{q+1}$ because the Poincar\'e residue map $\mathfrak{P}_{Y|D_s}$ is compatible with Cartier operator. The Cartier isomorphism in (2) is reduced to the smooth case by using an exact sequence $(\mathfrak{V}2)$ and Lemma 3.2.2 in \cite{Sa2}. \end{proof}

\begin{Lem}\label{Clin}(cf. \cite{Sa1}, {\rm p.731}) We consider  the following cartesian diagram:
\[
\xymatrix@M=5pt@C=70pt{   
Y' \ar[d]^-{f_1}\ar[r]^-{pr_2} & Y \ar[d]^-{f_2}\\
s\ar[r]^-{F_s} & s.
 }   
\]
Here $F_s$ denotes the absolute Frobenius automorphism of $s$. For $q \geq 0$, we have two isomorphisms:
\[(\mathfrak{C}1)\;C^D_{{\rm lin}}:F_{Y/s*}\left(Z\Xi^q_{Y|D_s}/B\Xi^q_{Y|D_s}\right) \xrightarrow{\cong} \Xi^q_{Y'|[D'_s/p]}, \]
\[(\mathfrak{C}2)\;C^D_{{\rm lin}}:F_{Y/s*}\left(Z\Lambda^q_{Y|D_s}/B\Lambda^q_{Y|D_s}\right) \xrightarrow{\cong} \Lambda^q_{Y'|[D'_s/p]}.\]
Here we define $D_s':=D_s \times_{Y} Y'$ and let $F_{Y/s*}:Y\rightarrow Y'$ be the unique finite morphism that the absolute Frobenius map $F_Y:Y\rightarrow Y$ factors through(see \cite{Sa1}). If $D_s=\emptyset$, we have an isomorphisms $(3.4.6)$ and $(3.4.7)$ in \cite{Sa1}, p.731.
\end{Lem}
\begin{proof}Let $F_{\mathscr{Y}/s}:\mathscr{Y} \rightarrow \mathscr{Y}'$ be a relative Frobenius morpshim. We have a short exact sequence by Proposition \ref{Poincare}
\[0 \longrightarrow F_{\mathscr{Y}/s*}\Omega^{\bullet}_{\mathscr{Y}|\mathscr{D}} \longrightarrow F_{\mathscr{Y}/s*}\Omega^{\bullet}_{\mathscr{Y}|\mathscr{D}}(\log Y)\xrightarrow{F_{\mathscr{Y}/s*}\mathfrak{P}_{Y|D_s}} F_{\mathscr{Y}/s*}\iota_*\Xi^{\bullet-1}_{Y|D_s}\longrightarrow 0.\]
We have $ F_{\mathscr{Y}/s*}\iota_*= \iota'_*F_{Y/s*}$, where $\iota'$ is an immersion $\iota':Y' \hookrightarrow \mathscr{Y}'$. 

We have isomorphisms \[C: \Omega^q_{\mathscr{Y}'|[\mathscr{D}'/p]}(\log Y') \xrightarrow{\cong} \mathcal{H}^q\left(F_{\mathscr{Y}/s*}(\Omega^{\bullet}_{\mathscr{Y}|\mathscr{D}}(\log Y))\right)\] and \[C: \Omega^q_{\mathscr{Y}'|[\mathscr{D}'/p]} \xrightarrow{\cong} \mathcal{H}^q\left(F_{\mathscr{Y}/s*}(\Omega^{\bullet}_{\mathscr{Y}|\mathscr{D}})\right)\] by the similar argument in the proof of Theorem 2.16, \cite{RS}. This isomorphisms are induced by the linear Cartier isomorphism given by Katz\;(\cite{Kat}, 7.2). 
Taking cohomology sheaves, we obtain a short exact sequence:
\begin{align*}0 \longrightarrow \mathcal{H}^{q+1}\left(F_{\mathscr{Y}/s*}\Omega^{\bullet}_{\mathscr{Y}|\mathscr{D}}\right) \longrightarrow \mathcal{H}^{q+1}\left(F_{\mathscr{Y}/s*}\Omega^{\bullet}_{\mathscr{Y}|\mathscr{D}}(\log Y)\right)\\ \xrightarrow{F_{\mathscr{Y}/s*}\mathfrak{P}_{Y|D_s}}  \iota'_*\mathcal{H}^{q}\left(F_{Y/s*}\Xi^{\bullet}_{Y|D_s}\right)\longrightarrow 0.\end{align*}

The assertion follows from the fact that the map $\mathfrak{P}_{Y|D_s}$ is compatible with Cartier operators. We obtain the isomorphism $(\mathfrak{C}2)$ by the similar argument as Lemma 3.3.4 in \cite{Sa1}.\end{proof}

The following is the main result of this chapter.
\begin{Thm}\label{PThm}(cf.\;\cite{Sa1}, Theorem 1.2.2 (2))\\
{\rm (1)}\;For any integers $i$ and $n$ with $0 \leq i \leq d$, there exist a natural pairings
\begin{equation}\label{pair1}\plim[m]H^i_{\acute{e}t}(Y, \nu_{Y|mD_s,n}^r) \times H_{\acute{e}t}^{d+1-i}(Y, \alpha'_*\lambda^{d-r}_{V,n})\longrightarrow \mathbb{Z}/p^n\mathbb{Z},\end{equation}
\begin{equation}\label{pair2}\plim[m]H^i_{\acute{e}t}(Y, \lambda_{Y|mD_s,n}^r) \times H_{\acute{e}t}^{d+1-i}(Y, \alpha'_*\nu^{d-r}_{V,n})\longrightarrow \mathbb{Z}/p^n\mathbb{Z}.\end{equation}
{\rm (2)}\; The pairings (\ref{pair1}), (\ref{pair2}) are non-degenerate pairings.
\end{Thm} 
\begin{proof} We first prove {\rm(1)}. It is enough to show that the case $n=1$ by using Lemma \ref{Lem1}. Consider a diagram
\[
\xymatrix@M=4pt@C=50pt{   
\nu_{Y|mD_s,1}^r \otimes \alpha'_*\lambda^{d-r}_{V,1}\ar@{.>}[d]\ar[r]^-{f} &\nu_{Y,1}^{r} \otimes \alpha'_*\lambda^{d-r}_{V,1}\ar[d]^-{g}&\\
\nu_{Y,1}^d\ar[r]&\alpha'_*\nu_{V,1}^d\ar[r]^-{\delta}&\beta'_*\nu_{D,1}^{d-1}\ar[r]&\nu_{Y,1}^d[1],&
 }   
\]
where $g$ is a composition of a map $\nu_{Y,1}^{r} \otimes \alpha'_*\lambda^{d-r}_{V,1} \rightarrow \alpha'_*\nu_{V,1}^{r} \otimes \alpha'_*\lambda^{d-r}_{V,1} \rightarrow \alpha'_*\nu_{V,1}^d$. Here last morphism is product morphism defined in \cite{Sa1}. Since  $\delta\circ g \circ f=0$ and $\Hom\left(\nu_{Y|mD_s,1}^r \otimes \alpha'_*\lambda^{d-r}_{V,1},\; \beta'_*\nu_{D,1}^{d-1}[-1]\right)=0$, we have a morphism $\nu_{Y|mD_s,1}^r \otimes \alpha'_*\lambda^{d-r}_{V,1} \longrightarrow \nu_{Y,1}^d$ by Lemma 2.1.2 (1) in \cite{Sa2}. By taking the cohomology of this morphism, we have a pairing (\ref{pair1}):
$\plim[m]H^i_{\acute{e}t}(Y, \nu_{Y|mD_s,1}^r) \times H_{\acute{e}t}^{d+1-i}(Y, \alpha'_*\lambda^{d-r}_{V,1})\longrightarrow H^{d+1}(Y,  \nu_{Y,1}^d) \xrightarrow{\tr}  \mathbb{Z}/p^n\mathbb{Z}$.
 We next construct the pairing $(\ref{pair2})$ by the similar argument of \S3 in \cite{Sa1}.  Since $\alpha'_*\nu^{d-r}_{V,1} \cong \displaystyle{\lim_{\stackrel{\longrightarrow}{m'}}}\; \nu^{d-r}_{Y,1} \otimes \mathscr{O}_{Y}(m'D_s)$, we define the map $\lambda_{Y|mD_s,n}^r\otimes \{\nu^{d-r}_{Y,1} \otimes \mathscr{O}_{Y}(m'D_s)\}_m\rightarrow \nu_{Y,1}^d$ of sheaves on $Y_{\acute{e}t}$. We consider the map \[\mathfrak{p}:\;\lambda_{Y|mD_s,n}^r\otimes\{ \nu^{d-r}_{Y,1} \otimes \mathscr{O}_{Y}(m'D_s)\}_m\rightarrow {\displaystyle \bigoplus_{y \in V^0}}i_{y*}\Omega^d_{y, \log}\]
  defined by $d\log(f_m) \otimes(\gamma_{m'}\otimes \omega) \mapsto \omega\cdot d\log((1+\gamma_{m'}z)\cdot f'_m)$, where $\gamma_{m'} \in \mathscr{O}_Y(m'D_s), \omega \in i_{y*}\Omega^{d-r}_{y, \log}, f_m=(1+z)\cdot f'_m \in (1+\mathscr{O}_Y(-mD_s))^{\times}\otimes (\alpha'_*\mathscr{O}_V)^{\otimes(r-1)}$. By the equation $\partial^{val}_{y,x}(\omega \cdot d\log(\gamma_{m'}\cdot f_m))=\partial^{val}_{y,x}(\omega)\cdot d\log(\overline{(1+\gamma_{m'}z)\cdot f'_m})$ in $i_{x*}\Omega^{d-1}_{x, \log}$, in Definition 3.1.1 (2),\cite{Sa1}, the image of the map $\mathfrak{p}$ lies in $\nu_{Y,1}^{d}$. Thus we obtain the map $\lambda_{Y|mD_s,n}^r\otimes \{\nu^{d-r}_{Y,1} \otimes \mathscr{O}_{Y}(m'D_s)\}_m\rightarrow \nu_{Y,1}^d$. We next prove {\rm (2)} assuming $n=1$. We show the non-degeneracy of the pairing $(\ref{pair1})$
\[\plim[m]H^i_{\acute{e}t}(Y, \nu_{Y|mD_s,1}^r) \times H_{\acute{e}t}^{d+1-i}(Y, \alpha'_*\lambda^{d-r}_{V,1})\longrightarrow H^{d+1}(Y,  \nu_{Y,1}^d) \xrightarrow{\tr}  \mathbb{Z}/p^n\mathbb{Z}.\]
It suffices to show that the natural two pairings
\begin{equation}\label{A1} \plim[m]H_{\acute{e}t}^i(Y, \Xi^r_{Y|D_s}) \times H_{\acute{e}t}^{d-i}(Y, \alpha'_*\Lambda^{d-r}_{V})\longrightarrow H_{\acute{e}t}^{d}(Y, \Xi_{Y}^d) \xrightarrow{\tr_1}  \mathbb{Z}/p^n\mathbb{Z}
\end{equation}
\begin{equation}\label{A2}\plim[m]H_{\acute{e}t}^i(Y, Z\Xi^r_{Y|D_s}) \times H_{\acute{e}t}^{d-i}(Y, \alpha'_*(\Lambda^{d-r}_{V}/B\Lambda^{d-r}_{V}))\longrightarrow H_{\acute{e}t}^{d}(Y, \Xi_{Y}^d/B\Xi_{Y}^d) \xrightarrow{\tr_2}  \mathbb{Z}/p^n\mathbb{Z}
\end{equation}
are non-degenerate pairings of $\mathbb{Z}/p^n\mathbb{Z}$-modules for any $i \in \mathbb{Z}$ by the following Lemma \ref{RSh} below.  Here $\tr_1$ and $\tr_2$ are defined as follows(see \cite{Sa1} p.731):
\[\tr_1:\;H_{\acute{e}t}^{d}(Y, \Xi_{Y}^d) \xrightarrow{\tr_f} k \xrightarrow{\tr_{k/\mathbb{F}_p}}   \mathbb{Z}/p\mathbb{Z},\quad \tr_2:\;H_{\acute{e}t}^{d}(Y, \Xi_{Y}^d/B\Xi_Y^d)\stackrel{ \xrightarrow{C}}{\cong}  H_{\acute{e}t}^{d}(Y, \Xi_{Y}^d) \xrightarrow{\tr_1}   \mathbb{Z}/p\mathbb{Z}.\]
\begin{Lem}\label{RSh}(cf. \cite{Sa1}, Lemma 2.5.7\;(2),\; Lemma 3.3.4\;(2))\\
(1)\;There exists a short exact sequence on $Y_{\acute{e}t}$
\[0 \longrightarrow \nu_{Y|D_s,1}^{q} \longrightarrow Z\Xi_{Y|D_s}^{q} \xrightarrow{1-C} \Xi_{Y|D_s}^{q} \longrightarrow 0\]
(2)\;There is a short exact sequence on $Y_{\acute{e}t}$
\[0 \longrightarrow \lambda_{Y|D_s,1}^{q} \longrightarrow \Lambda_{Y|D_s}^{q} \xrightarrow{1-C^{-1}} \Xi_{Y|D_s}^{q}/B\Xi_{Y|D_s}^{q} \longrightarrow 0\]
\end{Lem}
\begin{proof}
We first show that the assertion (1). There is a commutative diagram:
\[
\xymatrix@M=8pt@C=30pt{ 
           &0\ar[d] & 0\ar[d] &0\ar[d]\\
0 \ar[r] &\Omega_{\mathscr{Y}|\mathscr{D},\log}^{q+1}\ar[d]\ar[r] &Z^{q+1}_{\mathscr{Y}|\mathscr{D}}\ar[d]\ar[r]^-{1-C}&\Omega_{\mathscr{Y}|\mathscr{D}}^{q+1}\ar[d]\ar[r]&0\\
0 \ar[r] &j_*\Omega_{\mathscr{U}|\mathscr{D}\backslash D_s,\log}^{q+1}\ar[d]\ar[r] &Z^{q+1}_{\mathscr{Y}|\mathscr{D}}(\log Y)\ar[d]\ar[r]^-{1-C}&\Omega_{\mathscr{Y}|\mathscr{D}}^{q+1}(\log Y)\ar[d]\ar[r]&0\\
0 \ar[r] &\iota_*\nu_{Y|D_s,1}^{q}\ar[r]\ar[d] &\iota_*Z\Xi^{q}_{Y|D_s}\ar[d]\ar[r]^-{1-C}&\iota_*\Xi^{q+1}_{Y|D_s}\ar[d]\ar[r]&0\\
&0 & 0 &0
 }   
\]
Here the left vertical row is due to $(*1)$ in the proof of Proposition \ref{Poincare}, the middle and right vertical rows follows from an isomorphism $(\P)$ of Proposition \ref{Poincare}. The top and middle horizontal rows are due to Lemma \ref{shex}. Thus we obtain the lower horizontal row, which is an exact. We next prove the assertion (2). Since the problem are \'etale local on $Y$, we may assume that $Y$ is simple. By the definition of $\lambda^r_{Y|D_s}$ and $\Lambda_{Y|D_s}^r$, we have $\Lambda_{Y|D_s}^r=\Imm\left(\iota^{-1}\Omega_{\mathscr{Y}|\mathscr{D}}\xrightarrow{\iota^*} a_{1*}\Omega_{Y^{(1)}|D_s^{(1)}}\right)$. Here the map $\iota^*$ is the pull-back map of differential forms. Then we get exact sequences by similar argument as the proof of Lemma 3.3.4 in \cite{Sa1}:
\begin{align*}
&0 \rightarrow \Lambda^r_{Y|D_s} \xrightarrow{a_1^*} a_{1*}\Omega^n_{Y^{(1)}|D_s^{(1)}}\rightarrow \cdots \rightarrow a_{d+1-r*}\Omega^r_{Y^{(d+1-r)}|D_s^{(d+1-r)}}\rightarrow 0\\
&0 \rightarrow Z\Lambda^r_{Y|D_s} \xrightarrow{a_1^*} a_{1*}\mathcal{Z}^n_{Y^{(1)}|D_s^{(1)}}\rightarrow \cdots \rightarrow a_{d+1-r*}\mathcal{Z}^r_{Y^{(d+1-r)}|D_s^{(d+1-r)}}\rightarrow 0\\
&0 \rightarrow B\Lambda^r_{Y|D_s} \xrightarrow{a_1^*} a_{1*}\mathcal{B}^n_{Y^{(1)}|D_s^{(1)}}\rightarrow \cdots \rightarrow a_{d+1-r*}\mathcal{B}^r_{Y^{(d+1-r)}|D_s^{(d+1-r)}}\rightarrow 0
\end{align*}
By this exact sequences, we can reduce the assertion to the smooth case and  exact sequence $(\mathfrak{V}_2)$. The smooth case of the assertion due to Theorem 1.2.1 of \cite{JSZ}. This completes the proof. 
\end{proof}
We return to the proof of the non-degeneracy of the pairings $(\ref{A1})$ and $(\ref{A2})$.
The non-degeneracy of the pairing $(\ref{A1})$ follows from Proposition \ref{dual}, VII. 3, \cite{RD}, and an isomorphism $\alpha'_*\Lambda^{d-r}_{V} \cong \displaystyle{\lim_{\stackrel{\longrightarrow}{m'}}}\; \Lambda^{d-r}_{Y,1} \otimes \mathscr{O}_{Y}(m'D_s)$ because $\Lambda^{d-r}_{Y,1}$ is a coherent sheaf\;(see \cite{Sa1}, Lemma 3.3.2 (1)). We consider the non-degeneracy of $(\ref{A2})$. We can decompose the isomorphism $C$ into isomorphisms \begin{align*}Z\Xi^r_{Y|D_s}/B\Xi^r_{Y|D_s}\xrightarrow{F^{-1}_{Y/s}(C^D_{\lin})} F^{-1}_{Y/s}(\Xi^r_{Y'|[D'_s/p]}) \xrightarrow{(\pr_2^*)^{-1}} F_{Y/s}^{-1}\pr_2^{-1}(\Xi^r_{Y|[D_s/p]})\\=F_Y^{-1}\Xi^r_{Y|[D_s/p]}=\Xi^r_{Y|[D_s/p]}\end{align*} as $(3.4.8)$ in \cite{Sa1}. The pairing of pro-system with respect to $m$:
\[F_{Y/s*}(Z\Xi^r_{Y|mD_s})\times F_{Y/s*}(\Lambda^{d-r}_{Y|-mD_s}/B\Lambda^{d-r}_{Y|-mD_s})\rightarrow F_{Y/s*}(\Xi^d_{Y}/B\Xi^d_{Y})\xrightarrow{C_{\lin}}\Xi^d_{Y'}\]
induces an morphism \[\{F_{Y/s*}(\Lambda^{d-r}_{Y|-mD_s}/B\Lambda^{d-r}_{Y|-mD_s})\}_m\rightarrow \{R\mathscr{H}om_{\mathscr{O}_{Y'}}(F_{Y/s*}(Z\Xi^r_{Y|mD_s}), \Xi^d_{Y'})\}_m.\]
We have an isomorphism as a pro-system with respect to $m$:
\begin{align*}
\{F_{Y/s*}(Z\Lambda^{d-r}_{Y|-mD_s}/&B\Lambda^{d-r}_{Y|-mD_s})\}_m\cong \{\Lambda^{d-r}_{Y'|-pmD_s}\}_m\\
&\cong \{R\mathscr{H}om_{\mathscr{O}_{Y'}}(\Xi^{r}_{Y'|pmD_s},\; \Xi^d_{Y'})\}_m\\
&\cong \{R\mathscr{H}om_{\mathscr{O}_{Y'}}(F_{Y/s*}(Z\Xi^r_{Y|p^2mD_s}/B\Xi^r_{Y|p^2mD_s}),\; \Xi^d_{Y'})\}_m\\
&\cong \{R\mathscr{H}om_{\mathscr{O}_{Y'}}(F_{Y/s*}(Z\Xi^r_{Y|mD_s}/B\Xi^r_{Y|mD_s}),\; \Xi^d_{Y'})\}_m.
\end{align*}
Here the second isomorphism is due to Proposition \ref{dual}. For the third isomorphism, we use Lemma \ref{Clin}.
Thus, by using this fact  and an isomorphism $\{F_{Y/s*}(\Lambda^{d-r}_{Y|mD_s}/Z\Lambda^{d-r}_{Y|mD_s})\}_m \cong \{F_{Y/s*}B\Lambda^{d-r}_{Y|mD_s}\}_m$, we obtain an isomorphism of pro-systems \[\{F_{Y/s*}(\Lambda^{d-r}_{Y|-mD_s}/B\Lambda^{d-r}_{Y|-mD_s})\}_m\xrightarrow{\cong} \{R\mathscr{H}om_{\mathscr{O}_{Y'}}(F_{Y/s*}(Z\Xi^r_{Y|mD_s}), \Xi^d_{Y'})\}_m.\] We have a perfect pairing 
\[\plim[m]H^i_{\acute{e}t}(Y,Z \Xi_{Y|mD_s}^r) \times H_{\acute{e}t}^{d+1-i}(Y, \alpha'_*\Lambda^{d-r}_{V}/B\Lambda^{d-r}_{V})\longrightarrow H^{d+1}(Y',  \Xi_{Y'}^d) \xrightarrow{\tr_g} k.\]
The non-degeneracy of the pairing $(\ref{A2})$ is due to the commutative diagram in the proof of Theorem 1.2.2 (2) in \cite{Sa1}, p.732.
The proof of non-degeneracy of the pairing $(\ref{pair2})$ is similar argument as $(\ref{pair1})$'s one. This completes the proof.
\end{proof}
\begin{Rem}
When $Y$ is smooth in Theorem \ref{PThm}, we obtain the Theorem 4.1.4 in \cite{JSZ}. 
\end{Rem}

\section{\textbf{Explicit formula for $\mathscr{M}^r_m$}}
In this section we construct a canonical pairing $(\mathfrak{H})$ below and prove an explicit formula for that pairing, which will be used in the proof of \ref{Main Thm}.
In this section, we use a Sato's arguments of \S8 in \cite{Sa2}. 
\subsection{Setting}
We put $\nu^r_{Y}:=\nu^r_{n,1}$, $\mu':=i^*\psi_*\mu_{p}$ and $\mu:=\mu_p(K)$ for simplicity. Put $\mathscr{M}^q:=\mathscr{M}^q_1=\mathcal{H}^q\big(s_1(q)_{X|D}\big)$, $M^q_Y=i^*R^qj_*\mu_p^{\otimes q}$, $M^q_V=i^{'*}R^qj'_*\mu_p^{\otimes q}$, and let $U^{\bullet}$ be the filtration on $\mathscr{M}^q$ defined in \cite{Sa2}. The purpose of this section is to construct a morphism in\; $D^b(Y_{\acute{e}t}, \mathbb{Z}/p\mathbb{Z})$\;:
\[(\mathfrak{H})\quad\Theta^r_{m,D}:{\{U^1\mathscr{M}^r_m\}}_m\otimes \alpha'_*U^1M^{d-r+1}_V[-d-2]\longrightarrow \mu' \otimes \nu^{d-1}_Y[-d-1]\] and to prove an explicit formula for this morphism (Theorem \ref{ERL} below). 

\subsection{Construction of $\Theta^r_{D}$}

The sheaf $\mu'$ is non-canonically isomorphic to the constant sheaf $\mathbb{Z}/p\mathbb{Z}$, then we will write $\mu'\otimes \mathcal{K}$ ($\mathcal{K} \in  D^b(Y_{\acute{e}t}, \mathbb{Z}/p\mathbb{Z})$) for $\mu' \otimes^{\mathbb{L}} \mathcal{K}$ in $ D^b(Y_{\acute{e}t}, \mathbb{Z}/p\mathbb{Z})$. We consider the following distinguished triangle of pro-system with respect to $m$:
\begin{align*}{\{(\mathscr{M}^r_m/U^1\mathscr{M}^r_m)[-r-1]\}}_m\xrightarrow{g'} {\{\mathbb{B}_m(r)\}}_m\xrightarrow{t'}{\{\tau_{\leq r} s_1(r)_{X|mD}\}}_m\\ \longrightarrow {\{(\mathscr{M}^r_m/U^1\mathscr{M}^r_m)[-r]\}}_m,\end{align*}
where the last morpshim is defined as the composite $\tau_{\leq r} s_1(r)_{X|mD}\rightarrow\mathscr{M}^r_m[-r]\rightarrow (\mathscr{M}^r_m/U^1\mathscr{M}^r_m)[-r]$.
It is easy to see that $\mathbb{B}_m(r)$ is concentrated in $[0,r]$, and the triple $(\mathbb{B}_m(r), g',t')$ is unique up to a unique isomorphism by Lemma . 
We also consider the following distinguished triangle which is induced by \cite{Sa2}:
\begin{multline*} \alpha'_*(M^{d-r+1}_V/U^1M^{d-r+1}_V)[-d+r-2]\xrightarrow{h^{\vee}} \mathbb{B}^{\vee}(d-r+1)\\ \xrightarrow{u^{\vee}}\tau_{\leq d-r+1}\alpha'_*i'^{*}Rj'_*\mu_p^{\otimes d-r+1}\longrightarrow \alpha'_*(M^{d-r+1}_V/U^1M^{d-r+1}_V)[-d+r-1].\end{multline*}
Here the last morphism is defined as the composite
\begin{multline}\tau_{\leq d-r+1}\alpha'_*i'^{*}Rj'_*\mu_p^{\otimes d-r+1} \longrightarrow \alpha'_*M^{d-r+1}_V[-d+r-1]\\ \longrightarrow \alpha'_*(M^{d-r+1}_V/U^1M^{d-r+1}_V)[-d+r-1].\end{multline}
It is easy to see that $\mathbb{B}^{\vee}(d-r+1)$ is concentrated in $[0,d-r+1]$, and the triple $(\mathbb{B}_m^{\vee}(d-r+1),h^{\vee},u^{\vee})$ is unique up to a unique isomorphism by Lemma (3). 

We construct $\Theta^r_{D}$ by decomposing the morphism 
\begin{align*}{\{\mathbb{B}_m(r)\}}_m \otimes^{\mathbb{L}} \mathbb{B}^{\vee}(d-r+1)&\longrightarrow  \{\tau_{\leq r} s_1(r)_{X|mD}\}_m \otimes^{\mathbb{L}} {\tau_{\leq d-r+1}\alpha'_*i'^{*}Rj'_*\mu_p^{\otimes d-r+1}}\\
&\xrightarrow{\cong} \{\tau_{\leq r} s_1(r)_{X|mD}\}_m \otimes^{\mathbb{L}} \{\tau_{\leq d-r+1}\mathfrak{s^{\vee,\bullet}_{{\rm {m'}}D}}\}_{m'} \\
&\longrightarrow \tau_{\leq d+1}\mathcal{S}(d+1)_{(X,M_X)}\xrightarrow{\cong} i^*R\psi_*\mu_p^{\otimes d+1}.\end{align*}

Here the second morphism is a quasi-isomorphism and the complex $\mathfrak{s^{\vee, \bullet}_{{\rm {m'}}D}}$ is defined as follows:
\[ \left[  {s_n(d-r)}^{\vee\;0}_{X|m'D}\longrightarrow \cdots \longrightarrow  {s_n(d-r)}^{\vee\;d-r+1}_{X|m'D} \longrightarrow \cdots \right], \]
where $s_n(q)^{\vee}_{X|m'D}$ is a complex which degree $i$-part defined by {\footnotesize\[{s_n(q)^{\vee}_{X|m'D}}^i:=\left(\omega^i_{Z_n}\otimes \mathscr{O}_{Z_n}((m'+i)\mathscr{D}_n)\otimes J^{[q-i]}_{\mathscr{E}_n}\right)\oplus \left(\omega^{i-1}_{Z_n}\otimes \mathscr{O}_{Z_n}((m'+i)\mathscr{D}_n)\otimes \mathscr{O}_{\mathscr{E}_n} \right).\]}
The third morphism are induced by the product morphism $s_1(r)_{X|mD} \otimes \mathfrak{s^{\vee,\bullet}_{{\rm {m'}}D}} \rightarrow \mathcal{S}(d+1)_{(X,M_X)}$, which is defined as for product structure\;(see \cite{Tsu1}).

By Lemma 7.3.2 \cite{Sa2} and the assumption that $\zeta_p \in K$, there is a morphism 
\[(\mathfrak{o})\quad i^*R\psi_*\mu_p^{\otimes d+1} \cong \mu' \otimes \big(\tau_{\leq d}i^*R\psi_*\mu_p^{\otimes d}\big)\xrightarrow{id\otimes \sigma^d_n[-d]} \mu' \otimes \nu^{d-1}_Y[-d],\]
where the morphism $\sigma^d_n[-d]$ is defined in Proposition 3.6 of \cite{Sa3}. 
Thus we have a morphism 
\[(*1)\quad{\{\mathbb{B}_m(r)\}}_m \otimes^{\mathbb{L}} \mathbb{B}^{\vee}(d-r+1) \longrightarrow \mu' \otimes \nu^{d-1}_Y[-d].\]
Noting that ${\{\mathbb{B}_m(r)\}}_m$ is concentrated in $[0,r]$ with $\mathcal{H}^r\big(\mathbb{B}_m(r)\big) \cong U^1\mathscr{M}^r_m$ and $ \mathbb{B}^{\vee}(d-r+1)$ is concentrated in $[0,d-r+1]$ with $\mathcal{H}^{d-r+1}\big(\mathbb{B}^{\vee}(d-r+1)\big) \cong \alpha'_*U^1M^{d-r+1}_V$, we show the following:
\begin{Lem} There is a unique morphism 
\[(*2)\quad{\{\mathbb{B}_m(r)\}}_m \otimes^{\mathbb{L}}\big(\alpha'_*U^1M^{d-r+1}_V[-d+r-1]\big)\longrightarrow \mu' \otimes \nu^{d-1}_Y[-d]\quad in\;D^-\big(Y_{\acute{e}t}, \mathbb{Z}/p\mathbb{Z}\big)\]
 for each $m$ that the morphism $(*1)$ factors through.
\end{Lem}
\begin{proof}
One can check this by a similar argument as for Lemma 8.2.4 in \cite{Sa2}.
\end{proof}
Applying a similar argument as for this lemma to the morphism $(*2)$, we obtain a morphism 
\[(*3)\quad{\{U^1\mathscr{M}^r_m\}}_m[-r]\otimes^{\mathbb{L}} \alpha'_*U^1M^{d-r+1}_V[-d+r-1]\longrightarrow \mu' \otimes \nu^{d-1}_Y[-d].\]
Because $\mathbb{Z}/p\mathbb{Z}$-sheaves are flat over $\mathbb{Z}/p\mathbb{Z}$, there is a natural isomorphism in $D^-\big(Y_{\acute{e}t}, \mathbb{Z}/p\mathbb{Z}\big)$\;: 
\[{\{U^1\mathscr{M}^r_m\}}_m[-r]\otimes^{\mathbb{L}} \alpha'_*U^1M^{d-r+1}_V[-d+r-1] \cong {\{U^1\mathscr{M}^r_m\}}_m\otimes\alpha'_*U^1M^{d-r+1}_V[-d-1]\]
induced by the identity map on the $(r+(d-r+1) )$-th cohomology sheaves. We thus define the morphism $\Theta^r_{D}$ by composing the inverse of this isomorphism and the morphism 
$(*3)$ with shifting $[-1]$.
\subsection{Explicit formula for $\Theta^r_D$}
We calculate the morphism $\Theta^r_D$ explicitly below. We use the idea of the construction of maps of \cite{Sa2}. 
Let $n$ and $q$ satisfy $1 \leq n \leq d$ and $1 \leq q \leq e'-1$. We put $n':=d+1-n$.  We define 
{\footnotesize\[\Symb^{q,r}_{m,D}:=\{i^*U^q{(1+I_{mD})}^{\times}\otimes(i^*\psi_*\mathscr{O}^{\times}_{U_K})^{\otimes r-1}\}\otimes \{U^{e'-q}_{X_K}\otimes(i^*\psi_*\mathscr{O}^{\times}_{U_K})^{\otimes d-r}\otimes  \mathscr{O}_Y(m'D_s)\}.\]}
Here we remark that there is an isomorphism \[\displaystyle{\alpha'_*U^1M^{d-r+1}_V \cong \lim_{\stackrel{\longrightarrow}{m'}}U^1M^{d-r+1}_Y \otimes \mathscr{O}_Y(m'D)},\] because  $\gr_U^1M^{d-r+1}_Y$ is a quasi-coherent sheaf. The sheaf $U^q\mathscr{M}_m^r \otimes \big(U^{e'-q}M^{d-r+1}_Y\otimes \mathscr{O}_Y(m'D_s) \big)$ is a quotient of $\Symb^{q,r}_{m,D}$:
{\footnotesize \[U^q\mathscr{M}_m^r \otimes \big(U^{e'-q}M^{d-r+1}_Y\otimes \mathscr{O}_Y(m'D_s)\big)=\Imm\big(\Symb^{q,r}_{m,D}\longrightarrow U^1\mathscr{M}_m^r \otimes \big(U^1M^{d-r+1}_Y\otimes \mathscr{O}_Y(m'D_s))\big).\]}
We define the homomorphism of \'etale sheaves 
\[F^{q,r}_{m,D}:\Symb^{q,r}_{m,D}\longrightarrow\omega^{d-1}_Y/\mathcal{B}^{d-1}_Y
\]
by sending a local section $(1+\pi^q\alpha_{1,m})\otimes \Big(\bigotimes_{i=1}^{r-1} \beta_{i,m}\Big) \otimes (1+\pi^{e'-q}\alpha_{2,m'})\otimes \Big(\bigotimes_{i=r}^{d-r} \beta'_{i,m'}\Big)\otimes \gamma_{m'}$ with $\alpha_{1,m} \in i^*\mathscr{O}_X(-mD)$, $\beta_{i,m} \in i^*\psi_*\mathscr{O}^{\times}_{U_K}$, $\alpha_{2,m'} \in i^*\mathscr{O}_X$, $\beta_{i,m} \in i^*j_*\mathscr{O}^{\times}_{X_K}$, $\gamma_{m'} \in  \mathscr{O}_Y(m'D_s)$, to the following:
\begin{multline} q\cdot \overline{\alpha_{1,m}\cdot \alpha_{2,m}} \cdot \gamma_{m'}\cdot \Big(\bigwedge^{r-1}_{i=1}d\log \overline{\beta_{i,m}} \land \bigwedge^{d-1}_{i=r}d\log\overline{\beta'_{i,m'}}\Big)\\ +g^{-1}\Big(\overline{\alpha_{1,m'}}\cdot d(\overline{\alpha_{2,m}}\cdot\gamma_{m'})\land \bigwedge^{r-1}_{i=1}d\log\overline{\beta_{i,m}}\land \bigwedge^{d-1}_{i=r}d\log\overline{\beta'_{i,m'}}\Big)\; \mod \mathcal{B}^{d-1}_{Y}.\end{multline}
Here we put $\overline{x}$ its residue class in $\mathscr{O}_Y$ for $x \in i^*\mathscr{O}_X$ and put $g$ the following $\mathscr{O}_Y$-linear isomorphism which is defined in \cite{Sa2}, :
\[g\;:\;\omega^N_Y\xrightarrow{\cong} \omega^{N+1}_{(Y,M_Y)/(s,N_s)},\quad \omega\mapsto d\log(\overline{\pi})\land \omega. \] 
In what follows, we put $U^{e'-q}M^{d-r+1}_{n,Y, m'D}:=U^{e'-q}M^{d-r+1}_Y\otimes \mathscr{O}_Y(m'D_s)$ for simplicity.
\begin{Lem}(cf.\;\cite{Sa2}, Lemma 8.3.4)
Let $r$ and $q$ be as above. Then $F^{q,r}_{m,D}$ factors through $U^q\mathscr{M}_m^r \otimes U^{e'-q}M^{d-r+1}_{n,Y, m'D}.$
\end{Lem}
\begin{proof}We prove this Lemma by the method of Lemma 8.3.4, \;\cite{Sa2}. We denote $Y^{\sing}$ the singular locus of $Y$, and let $j_Y$ be the open immersion $Y-Y^{\sing} \hookrightarrow Y$. 
Since $\omega^N_Y/ \mathcal{B}^N_Y$ is a locally free $\left(\mathscr{O}_Y\right)^p$-module, we may assume that $Y$ is smooth over $s(=\Spec(k))$. 
We prove that $F^{q,r}_{m,D}$ factors $\gr^q_{U}\mathscr{M}_m^r \otimes \gr^{e'-q}_UM^{d-r+1}_{n,Y, m'D}$ under the assumption that $Y$ is smooth.
We put $\Omega^u_{Y|-m'D}:=\Omega^u_Y \otimes \mathscr{O}_Y(m'D)$ below. Let 
\[(1)\quad\rho^{l,u}_{mD}:\Omega^{u-2}_{Y|mD} \oplus \Omega^{u-1}_{Y|mD} \longrightarrow {\{\gr^l_{U}\mathscr{M}_m^u\}}_m, \]
\[(2)\quad\rho^{l,u}_{m'}:\Omega^{u-2}_{Y|-m'D} \oplus \Omega^{u-1}_{Y|-m'D} \longrightarrow \gr^l_{U}M^{u}_{n,Y, m'D},\]
be the map\;(cf. \cite{BK}, (4.3)) defined as follows:
{\tiny\[(1')\;\begin{cases}
({\alpha}_m\cdot d \log \beta_1\land \cdots \land d\log \beta_{u-2},0)\mapsto& \{1+\pi^l\tilde{{\alpha}}_m, \tilde{\beta}_1, \cdots, \tilde{\beta}_{u-2},\pi\}\mod U^{l+1}\mathscr{M}_m^u,\\
(0, {\alpha}_m\cdot d \log \beta_1\land \cdots \land d\log \beta_{u-1})\mapsto& \{1+\pi^l\tilde{{\alpha}}_m, \tilde{\beta}_1, \cdots, \tilde{\beta}_{u1},\pi\}\mod U^{l+1}\mathscr{M}_m^u,
\end{cases}
\]

\[(2')\;\begin{cases}
({\eta}_m\cdot d \log \beta_1\land \cdots \land d\log \beta_{u-2},0)\mapsto& \{1+\pi^l\tilde{{\eta}}_m, \tilde{\beta}_1, \cdots, \tilde{\beta}_{u-2},\pi\}\mod U^{l+1}M^{u}_{n,Y, m'D},\\
(0, {\eta}_m\cdot d \log \beta_1\land \cdots \land d\log \beta_{u-1})\mapsto& \{1+\pi^l\tilde{{\eta}}_m, \tilde{\beta}_1, \cdots, \tilde{\beta}_{u1},\pi\}\mod U^{l+1}M^{u}_{n,Y, m'D},
\end{cases}
\] }for ${\alpha}_m \in \mathscr{O}_Y(-mD_s)$, ${\eta}_m \in \mathscr{O}_Y(m'D)$ and each $\beta_i \in \mathscr{O}^{\times}_Y$, where $\tilde{\alpha}_m \in \mathscr{O}_X(-mD)\;(\resp. \tilde{\beta}_i \in \mathscr{O}^{\times}_X)$ denotes a lift of $\alpha_m\;(\resp. \beta_i)$. 
We have the following short exact sequences which is considered in \cite{BK} Lemma 4.5: 
\[(a)\quad\quad0 \longrightarrow\Omega^{u-2}_{Y|mD} \xrightarrow{\theta^{l,u}_{mD}} \Omega^{u-2}_{Y|mD}\oplus\Omega^{u-1}_{Y|mD} \xrightarrow{\rho^{l,u}_{mD}}\gr^l_{U}\mathscr{M}_m^u\longrightarrow 0\quad(p \not | l), \]
\[(b)\quad\quad0 \longrightarrow\mathcal{Z}^{u-2}_{Y|mD}\oplus\mathcal{Z}^{u-1}_{Y|mD} \longrightarrow \Omega^{u-2}_{Y|mD}\oplus\Omega^{u-1}_{Y|mD} \xrightarrow{\rho^{l,u}_{mD}}\gr^l_{U}\mathscr{M}_m^u\longrightarrow 0\quad(p | l), \]
\[(c)\quad\quad0 \longrightarrow\Omega^{u-2}_{Y|-m'D} \xrightarrow{\theta^{l,u}_{m'}} \Omega^{u-2}_{Y|-m'D}\oplus\Omega^{u-1}_{Y|-m'D} \xrightarrow{\rho^{l,u}_{m'}}\gr^l_{U}M^{u}_{n,Y, m'D}\longrightarrow 0\quad(p \not | l), \]
\[(d)\quad\quad0 \longrightarrow\mathcal{Z}^{u-2}_{Y|-m'D}\oplus\mathcal{Z}^{u-1}_{Y|-m'D} \longrightarrow \Omega^{u-2}_{Y|-m'D}\oplus\Omega^{u-1}_{Y|-m'D} \xrightarrow{\rho^{l,u}_{m'}}\gr^l_{U}M^{u}_{n,Y, m'D}\longrightarrow 0\quad(p | l).\]
Here $\theta^{l,u}_{mD}$ and $\theta^{l,u}_{m'}$ are given by $\omega \mapsto \left((-1)^u\cdot l \cdot \omega,\;d\omega \right)$.
We define the following maps: 
\[h^{l,u}_{mD}:i^*U^0\left(1+I_{mD}\right)^{\times} \otimes \left(i^*\psi_*\mathscr{O}_{U_K}^{\times}\right)^{\otimes u-1}\longrightarrow \Omega^{u-2}_{Y|mD}\oplus\Omega^{u-1}_{Y|mD};\]
{\footnotesize\[(1+\pi^l\alpha_m)\otimes\left(\bigotimes^{u-1}_{i=1}\beta_i\right) \mapsto \begin{cases} \left(0, \overline{\alpha}_m\cdot \bigwedge_{1 \leq i \leq u-1}d\log\overline{\beta}_i\right)\quad({\rm if}\;\beta_i \in i^*\mathscr{O}_X^{\times}\;{\rm for\;all}\;i ),\\
\left((-1)^{u-1-i'}\cdot \overline{\alpha}_m\cdot \bigwedge_{1 \leq i \leq u-1, i \neq i'} d\log \overline{\beta}_i, 0 \right)\quad({\rm if}\;\beta_i=\pi\;{\rm for \;exactly\; one}\; i \neq i'),\\
(0,0)\quad({\rm otherwise}),
\end{cases}\]}

\[h^{l,u}_{m'}:U_{X_K}^l \otimes \left(i^*\psi_*\mathscr{O}_{U_K}^{\times}\right)^{\otimes u-1}\otimes \mathscr{O}_Y(m'D_s)\longrightarrow \Omega^{u-2}_{Y|-m'D}\oplus\Omega^{u-1}_{Y|-m'D};\]
{\footnotesize\[(1+\pi^l\alpha)\otimes\left(\bigotimes^{u-1}_{i=1}\beta_i\right)\otimes \gamma_{m'} \mapsto \begin{cases} \left(0, \gamma_{m'}\cdot\overline{\alpha}\cdot \bigwedge_{1 \leq i \leq u-1}d\log\overline{\beta}_i\right)\quad({\rm if}\;\beta_i \in i^*\mathscr{O}_X^{\times}\;{\rm for\;all}\;i ),\\
\left((-1)^{u-1-i'}\cdot  \gamma_{m'}\cdot\overline{\alpha}\cdot \bigwedge_{1 \leq i \leq u-1, i \neq i'} d\log \overline{\beta}_i, 0 \right)\quad({\rm if}\;\beta_i=\pi\;{\rm for \;exactly\; one}\; i \neq i'),\\
(0,0)\quad({\rm otherwise}),
\end{cases}\]}
with $\alpha \in i^*\mathscr{O}_X$, $\alpha_m \in  i^*\mathscr{O}_X(-mD)$, $\gamma_m\in \mathscr{O}_Y(m'D_s)$ and $\beta_i \in  i^*\mathscr{O}_X^{\times}\cup\{\pi\}$.
Here for $x \in i^*\mathscr{O}_X\;(resp. x \in i^*\mathscr{O}_X^{\times})$, $\overline{x}$ denotes its residue class in $\mathscr{O}_Y\;(resp. i^*\mathscr{O}_Y^{\times})$.
We consider a commutative diagram\;(cf.\;\S8.3,\;\cite{Sa2})\\

{\tiny \[
\xymatrix@C=66pt@R=30pt{   
\Symb^{q,r}_{mD}  \ar@/^46pt/@{>}[rr]_{{\rm symbol\;map}}\ar[rd]_-{F^{q,r}_{m,D}} \ar[r]^-{h^{q,r} \otimes h^{e'-q, d-r+1}} & \left(\Omega^{u-2}_{Y|mD}\oplus\Omega^{u-1}_{Y|mD}\right) \oplus \left( \Omega^{u-2}_{Y|-m'D}\oplus\Omega^{u-1}_{Y|-m'D}\right)\ar[d]^-{\varphi^{q,r}_{D}} \ar[r]^-{\rho^{q,r}_{mD}\otimes\rho^{e'-q,d-r+1}_{m'}} &\gr^q_{U}\mathscr{M}_m^r \otimes\gr^{e'-q}_{U}M^{d-r+1}_{n,Y, m'D}&\\
 &\Omega^N_Y/\mathcal{B}^N_Y,&&
 } \]}

where $\varphi^{q,r}_D$ is defined as for \S8.3, \cite{Sa2}\;:
\begin{align*}&(\omega_1\otimes \gamma_{1,m},\;\omega_2\otimes \gamma_{2,m})\oplus
(\omega_3\otimes \delta_{1,m},\; \omega_4\otimes \delta_{2,m}) \\
&\mapsto \left(q\cdot \gamma_{2,m}\cdot \delta_{2,m}(\omega_2\land \omega_4)+(-1)^{r-1}\gamma_{1,m}\cdot\delta_{2,m}d(\omega_1)\land \omega_4+(-1)^{d-r} \gamma_{2,m}\cdot \delta_{1,m}(\omega_2\land d(\omega_3)       \right)\\
&\mod \mathcal{B}^N_Y.\end{align*}
We check the composition map $(G:=)\varphi^{q,r}_{D}\circ (h^{q,r} \otimes h^{e'-q, d-r+1})=F^{q,r}_{mD}$.
We take any local section $\mathfrak{x}:=(1+\pi^q\alpha_{1,m})\otimes \Big(\bigotimes_{i=1}^{r-1} \beta_{i,m}\Big) \otimes (1+\pi^{e'-q}\alpha_{2,m'})\otimes \Big(\bigotimes_{i=r}^{d-r} \beta'_{i,m'}\Big)\otimes \gamma_{m'}$ in $\Symb^{q,r}_{mD} $.  We can consider $(3\times3)$-cases, but we calculate the following cases(other cases left to the reader):\\
$\bullet$\;If $\beta_{i,m} \in i^*\mathscr{O}_X^{\times}$ for all $i$ and $\beta_{i,m'}=\pi$ for exactly one $i'$, we denoted by $k$.  
{\tiny\begin{align*}
&G(\mathfrak{x})=\varphi^{q,r}_{D}\left(\left(0,\; \overline{\alpha}_{1,m}\cdot \bigwedge_{1 \leq i \leq r-1}d\log\overline{\beta}_{i,m}\right),\;\left((-1)^{d-r-i'}\cdot  \gamma_{m'}\cdot\overline{\alpha}_{2,m'}\cdot \bigwedge_{r \leq i \leq d-r, i \neq i'} d\log \overline{\beta}_{i,m'},\;0 \right)                                            \right)\\
&=(-1)^{r'}\left(\overline{\alpha}_{1,m}\cdot \bigwedge_{1 \leq i \leq r-1}d\log\overline{\beta}_{i,m} \right)\land d\left(  (-1)^{d-r-i'}\cdot  \gamma_{m'}\cdot\overline{\alpha}_{2,m'}\cdot \bigwedge_{r \leq i \leq d-r, i \neq i'} d\log \overline{\beta}_{i,m'}\right)\\
&=\overline{\alpha}_{1,m}d( \gamma_{m'}\cdot\overline{\alpha}_{2,m'})\land  \bigwedge_{1 \leq i \leq r-1}d\log\overline{\beta}_{i,m} \land \bigwedge_{r \leq i \leq d-r, i \neq i'} d\log \overline{\beta}_{i,m'}\\
&=F^{q,r}_{mD}(\mathfrak{x})\quad\quad \mod \mathcal{B}^{d-1}_Y.
\end{align*}}
Hence it suffices to prove that the subsheaf  $\Ker(\rho^{q,r}_{mD}\otimes\rho^{e'-q,d-r+1}_{m'})$ of {\tiny$\left(\Omega^{u-2}_{Y|mD}\oplus\Omega^{u-1}_{Y|mD}\right) \oplus \left( \Omega^{u-2}_{Y|-m'D}\oplus\Omega^{u-1}_{Y|-m'D}\right)$} has trivial image under $\varphi_D^{q,r}$.
We only consider the case (i)\;$p \nmid l$ for simplicity. For any element $x=(x_1,x_2) \in \Ker(\rho^{q,r}_{mD}\otimes\rho^{e'-q,d-r+1}_{m'})$, there exists an element $\eta_1\otimes \gamma_{1,m'}  \in \Omega^{u-2}_{Y|mD}$ such that $\theta^{l.u}_{mD}(\eta_1\otimes \gamma_{1,m'} )=x_1$ and there exists an element $\eta_2\otimes \gamma_{2,m'} \in \Omega^{u-2}_{Y|-m'D}$ such that $\theta^{l.u}_{m'}(\eta_2\otimes \gamma_{2,m'})=x_2$ by the exact sequences $(a)$ and $(c)$. Then we can compute $\varphi^{q,r}_D(x)$ as follows:
{\footnotesize\begin{align*}
\varphi^{q,r}_D(x)&=\varphi^{q,r}_D\left(\big((-1)^rq\eta_1\otimes \gamma_{1,m'},\;d\eta_1\otimes \gamma_{1,m'}\big),\big((-1)^{d-r+1}(e'-q)\cdot\eta_2\otimes \gamma_{2,m'},\;d\eta_2\otimes \gamma_{2,m'}\big)\right)\\
&=((-1)^rq \gamma_{1,m'}\gamma_{2,m'} d\eta_1 \land d\eta_2+(-1)^{r-1} \gamma_{1,m'} \gamma_{2,m'}d((-1)^r q\eta_1)\land d\eta_2\\
&(-1)^{d-r} \gamma_{1,m'} \gamma_{2,m'}(d\eta_1\land d((-1)^{d-r+1}(e'-q)\cdot\eta_2)) )\quad \mod \mathcal{B}^N_{Y}\\
&=((-1)^rq \gamma_{1,m'}\gamma_{2,m'} d\eta_1 \land d\eta_2+(-1)q \gamma_{1,m'} \gamma_{2,m'}d(\eta_1)\land d\eta_2\\
&(-1)(e'-q)\gamma_{1,m'} \gamma_{2,m'}(d\eta_1\land d\eta_2) )\quad \mod \mathcal{B}^N_{Y}
\end{align*}}
Since $d(\eta_1\land d(\eta_2))=d\eta_1 \land d\eta_2$,  we obtain the assertion in the case $(i)$. This completes the proof.
\end{proof}

\begin{Def}(cf.\;\cite{Sa2}, Definition 8.3.6)\;For $\zeta(\neq 1) \in \mu_p(K)$, let $v(\zeta) \in k^{\times}$ be the residue class of $(1-\zeta)/\pi^{e(p-1)} \in \mathscr{O}_K^{\times}$. We put $w:=\zeta \otimes v(\zeta)^{-p} \in \mu \otimes k$(see Definition 8.3.6 in \cite{Sa2}).
We define the homomorphism as for $f^{q,n}$ in Definition 8.3.6 in \cite{Sa2}
 \begin{align*}\xymatrix@M=10pt{f^{q,r}_{m,D}:U^q\mathscr{M}^r_m \otimes U^{e'-q}M^{d-r+1}_{n,Y, m'D} \ar[r]& (\mu' \otimes \underbar{k}) \otimes_{\underbar{k}} \omega^N_Y/\mathcal{B}^N_Y\\ \ar[r]^-{\cong}& \mu' \otimes \omega^N_Y/\mathcal{B}^N_Y  }\end{align*} as $w \otimes (-1)^{N+n}\Tilde{F^{q,r}_{m,D}}$. 
 Here $\underbar{k}$ is the constant sheaf on $Y_{\acute{e}t}$ associated with $k$.  We regard $w \in \mu \otimes k$ as a global section of $\mu' \otimes \underbar{k}$, and $\Tilde{F^{q,r}_{m,D}}$ denotes the map induced by $F^{q,r}_{m,D}$. The map $f^{q,r}_{m,D}$ is independent of the choice of $\pi$ by the definition of $w$ and $\Tilde{F^{q,r}_{m,D}}$.
\end{Def}

We state the main result of this section. 
\begin{Thm}\label{ERL}(Explicit reciprocity law, cf.Theorem 8.3.8, \cite{Sa2})
We have the following commutative square of pro-systems in $D^b\big(Y_{\acute{e}t}, \mathbb{Z}/p^n\mathbb{Z}\big)$
\[
\xymatrix@M=10pt{   
{\{U^q\mathscr{M}_m^r\}}_m\otimes {\{U^{e'-q}M^{d-r+1}_{n,Y, m'D}\}}_{m'}\ar[d]^{\rm canonical}\ar[r]^-{f^{q,r}_{m,D}}& \mu'\otimes \omega^N_Y/\mathcal{B}^N_Y\ar[d]^{\chi} &\\
 {\{U^1\mathscr{M}_m^r\}}_m\otimes {\{U^{1}M^{d-r+1}_{n,Y, m'D}\}}_{m'} \ar[r]^-{\Theta^{r}_{m,D}[d+2]} & \mu'\otimes \nu^N_{Y,n}[1],&
 } 
\]for $(q, r)$ with $1\leq q \leq e'-1$ and $1 \leq r \leq d$. Here $\chi$ is defined in \S8.3, \cite{Sa2}.
\end{Thm}

We will prove Theorem \ref{ERL} in two steps  below by the same arguments as for the proof of Theorem 8.3.8, \cite{Sa2}.\\
{\bf I.}\;\;Reduction to cohomology groups(cf. \S8.4, \cite{Sa2})\;:\mbox{}

Without loss of generality, we may assume that $X$ is connected. Thus by Proposition 8.4.1, \cite{Sa2} for $i=N$, we have 
\begin{align*}
\Hom&_{D^b(Y_{\acute{e}t}, \mathbb{Z}/p\mathbb{Z})}\left({\{U^q\mathscr{M}_m^r\}}_m\otimes {\{U^{e'-q}M^{d-r+1}_{n,Y, m'D}\}}_{m'},  \mu'\otimes \nu^N_{Y,n}[1] \right)\\
\cong&\Hom\left(H^N(Y, {\{U^q\mathscr{M}_m^r\}}_m\otimes {\{U^{e'-q}M^{d-r+1}_{n,Y, m'D}\}}_{m'}), \mu \otimes H^{N+1}(Y, \nu^n_Y) \right).
\end{align*}
Therefore we are reduced to the equality of maps on cohomology groups
\[(\circledast 1)\quad\quad H^N(Y, \Theta^{q,r}_{m,D})=H^N(Y, \chi \circ f^{q,r}_{m,D}),\]
where we denote $\Theta^{q,r}_{m,D}$ the composite morphism 
{\footnotesize\begin{align*}
\xymatrix@M=10pt{\Theta^{q,r}_{m,D}:{\{U^q\mathscr{M}_m^r\}}_m\otimes {\{U^{e'-q}M^{d-r+1}_{n,Y, m'D}\}}_{m'}\ar[r]^-{{\rm canonical}}&{\{U^1\mathscr{M}_m^r\}}_m\otimes {\{U^{1}M^{d-r+1}_{n,Y, m'D}\}}_{m'}\\\ar[r]^-{\Theta^{r}_{m,D}[d+2]}&  \mu'\otimes \nu^N_{Y,n}[1]. }\end{align*}}
{\bf IId.}\;\; Reduction to higher local fileds(cf. \S8.5, \cite{Sa2})\;:\mbox{}

In this subsection, we reduce $(\circledast 1)$ to $(\circledast 2)$ below by Sato's trick used in \S8.5, \cite{Sa2}. 
For $\mathcal{K}\in D^b(Y_{{\acute e}t},\mathbb{Z}/p)$, we can consider the map 
\[
\xymatrix@=10pt{\delta_{Y}(\mathcal{K}):\bigoplus_{(y_0,y_1,\cdots,y_N) \in \Ch(Y)} H^0(y_N, \mathcal{K}) \ar[r]&H^N(Y, \mathcal{K}).}\]
Here $(y_0,y_1,\cdots,y_N)\in\Ch(Y)$ is a chain on $Y$(see \S 8.5, \cite{Sa2} for details).
We obtain the following Lemma by Sublemma 8.5.2, \cite{Sa2}:
\begin{Lem}
The map $\delta_Y\left({\{U^q\mathscr{M}_m^r\}}_m\otimes {\{U^{e'-q}M^{d-r+1}_{n,Y, m'D}\}}_{m'} \right)$ is surjective.
\end{Lem}
 By the above lemma, $(\circledast 1)$ is reduced to the formula 
 \[(\circledast 2)\quad\quad H^0(y_N, \Theta^{q,r}_{m,D})=H^0(y_N, \chi \circ f^{q,r}_{m,D}),\]
 for all chains $(y_0,y_1,\cdots,y_N) \in \Ch(Y)$, which will be proved in next subsection below.

\subsection{Proof of $(\circledast 2)$}\mbox{}
We begin the proof of  the $(\circledast 2)$ by the same manner of the proof of $(8.5.3)$ in \S8.7, \cite{Sa2}.
We fix an arbitrary chain $(y_0,y_1,\cdots,y_N) \in \Ch(Y)$. Put $F_N:=\kappa(y_N)$ and $L_{N+1}:=\Frac\left[(\cdots ((\mathscr{O}^h_{X,y_0})^h_{y_1})\cdots)^h_{y_N} \right]$
which is a henselian discrete valuation field of characteristic $0$ with residue field $F_N$.\;Let $F/F_N$ be a finite separable field extension. We put $y:=\Spec(F)$ and 
{\footnotesize\[\xymatrix@M=10pt{\mathscr{A}^{q,r}_m(F):=H^0(y,U^q\mathscr{M}_m^r) \otimes H^0(y, U^{e'-q}M^{d-r+1}_{n,Y, m'D}) \subset H^0(y, U^q\mathscr{M}_m^r \otimes U^{e'-q}M^{d-r+1}_{n,Y, m'D}).  }\]}
We use Lemma 8.6.1, \cite{Sa2} for the subfield $(F_N)^p \subset F_N$, the formula $(\circledast 2)$ is reduced to the formula
 \[(\circledast 3)\quad\quad H^0(y, \Theta^{q,r}_{m,D})|_{\mathscr{A}^{q,r}_m(F)}=H^0(y, \chi \circ f^{q,r}_{m,D})|_{\mathscr{A}^{q,r}_m(F)}.\]
Here we have $H^0(y, U^q\mathscr{M}_m^r)\cong H^0(y, U^qM^r)$ and $H^0(y, U^{e'-q}M^{d-r+1}_{n,Y, m'D})\cong H^0(y, U^{e'-q}M^{d-r+1}_{n,Y})$. Thus we obtain the $(\circledast 3)$ by the same argument as for \S $8.5$ in \cite{Sa2}. This completes the proof of Theorem \ref{ERL}.

\section{\textbf{Duality for $\mathscr{M}^r_m$}}
In this section, we prove the following Theorem \ref{D-U} which will be used in \S8. 
\begin{Thm}\label{D-U}(cf. Theorem 9.1.1, \cite{Sa2})Let $r$ be $1 \leq r \leq d$. Then for an integer $i$, the following pairing induced by  $\Theta^{r}_{m,D}$ and $\tr_Y$:
\[\lim_{\stackrel{\longleftarrow}{m}}H^i(Y, U^1\mathscr{M}^r_m)  \times H^{2d+1-i}(Y, \alpha'_*U^1M^{d-r+1}_{n,V})\xrightarrow{\Theta^r{m,D}} \mu \otimes H^{d}(Y, \nu^{d-1}_{Y,n})\xrightarrow{\id \otimes \tr} \mu\]
is a non-degenerate pairing. 
\end{Thm}
We will calculate the map $f^{q,r}_m$ constructed in \S5.  We recall and define some filtrations on $M^{d-r+1}_{n,Y}$ and $\mathscr{M}^r_{m}$.
\begin{Def}(\S9 in \cite{Sa2} and \S3 in \cite{Y})
\begin{enumerate}
\item\; We define the subesheaf $T^qM^r_Y \subset U^qM^r_Y\;(q \geq 1)$ as the part generated by $V^qM^r_Y$ and symbols of the form 
\[\{1+\pi^q\alpha^p,\beta_1,\cdots,\beta_{r-1}\}\]
with $\alpha \in i^*\mathscr{O}_X$ and each $\beta_i \in i^*j_*\mathscr{O}_{X_K}^{\times}$.
\item\; We similally define the subesheaf $T^q\mathscr{M}^r_{m}\subset U^q\mathscr{M}^r_{m}\;(q \geq 1)$ as the part generated by $V^q\mathscr{M}^r_{m}$ and symbols of the form 
\[\{1+\pi^q\alpha^p,\beta_1,\cdots,\beta_{r-1}\}\]
with $\alpha \in i^*\mathscr{O}_X(-mD)$ and each $\beta_i \in i^*\psi_*\mathscr{O}_{U_K}^{\times}$ for each $m$.
\end{enumerate}
\end{Def}
From the definition, we have 
$U^{q+1}\subset V^q\subset T^q\subset U^{q}$ for $M^r_{Y, m'D}$ and $\mathscr{M}^r_{m}$ respectively.
For $q \geq 1$, we put 
\[\gr^q_{U/T}:=U^q/T^q,\quad \gr^q_{T/V}:= T^q/V^q,\quad\gr^q_{V/U}:= V^q/U^{q+1}\]
for $M^r_{Y, m'D}$ and $\mathscr{M}^r_{m}$ respectively.

By Theorem 3.3.7 in \cite{Sa2} $(3)$ and $(4)$, the sheaf $U^{e'}M^r_{Y, m'D}=0$, and for $q$ with $1 \leq q \leq e'-1$
we have isomorphisms 
\begin{align*}
\rho^{q,l}_{1,m'}:\gr^q_{U/T}M^l_{Y, m'D} \xrightarrow{\cong}& \left(\omega^{l-1}_Y/\mathcal{Z}^{l-1}_Y\right)\otimes_{ \mathscr{O}_Y} \mathscr{O}_Y(m'D),\\
\{1+\pi^q\alpha, \beta_1, \cdots, \beta_{l-1}\}\otimes \gamma_{m'}\mod T^qM^r_{Y, m'D} \mapsto \;&\overline{\alpha}\cdot\left(\bigwedge^{l-1}_{i=1} d\log \overline{{\beta}}_i\right)\otimes \gamma_{m'} \mod \mathcal{Z}^{r-1}_Y \\
\rho^{q,l}_{2,m'}:\gr^q_{T/V}M^l_{Y, m'D}  \xrightarrow{\cong}& \begin{cases} \left(\mathcal{Z}^{l-1}_Y/\mathcal{B}^{l-1}_Y\right)\otimes_{ \mathscr{O}_Y} \mathscr{O}_Y(m'D)\quad(p\nmid q),\\
0\quad(p |q),
\end{cases}\\
\{1+\pi^q\alpha^p, \beta_1, \cdots, \beta_{l-1}\}\otimes \gamma_{m'}\mod V^qM^r_{Y, m'D} \mapsto \overline{\alpha}^p&\cdot\left(\bigwedge^{l-1}_{i=1} d\log \overline{{\beta}}_i\right)\otimes \gamma_{m'}\mod \mathcal{B}^{r-1}_Y\;(p\nmid q) \\
\rho^{q,l}_{3,m'}:\gr^q_{V/U}M^l_{Y, m'D} &\xrightarrow{\cong} \left(\omega^{l-2}_Y/\mathcal{Z}^{l-2}_Y\right)\otimes_{\mathscr{O}_Y} \mathscr{O}_Y(m'D),\\
\{1+\pi^q\alpha, \beta_1, \cdots, \beta_{l-2},\pi\}\otimes \gamma_{m'}\mod U^{q+1}M^r_{Y, m'D} \mapsto &\overline{\alpha}\cdot\left(\bigwedge^{l-2}_{i=1} d\log \overline{{\beta}}_i\right)\otimes \gamma_{m'} \mod \mathcal{Z}^{r-2}_Y.
\end{align*}

On the other hands, we get the following isomorphisms by Thorem 3.5 in \cite{Y}:
\begin{align*}
\rho^{q,j}_{1,mD}:{\{\gr^q_{U/T}\mathscr{M}^j_{m}\}}_m&\xrightarrow{\cong} {\left\{\left(\omega^{j-1}_{Y|mD}/\mathcal{Z}^{j-1}_{Y|mD}\right)\right\}}_m,\\
\{1+\pi^q\alpha, \beta_1, \cdots, \beta_{j-1}\}_m&\mod T^q\mathscr{M}^r_{m}  \mapsto \overline{\alpha}\cdot\left(\bigwedge^{j-1}_{i=1} d\log \overline{{\beta}}_i\right)\mod \mathcal{Z}^{j-1}_{Y|mD} \\
\rho^{q,j}_{2,mD}:{\{\gr^q_{T/V}\mathscr{M}^j_{m}\}}_m& \xrightarrow{\cong} \begin{cases} {\left\{\left(\mathcal{Z}^{j-1}_{Y|mD}/\mathcal{B}^{j-1}_{Y|mD}\right)\right\}}_m\quad(p\nmid q),\\
0\quad(p |q),
\end{cases}\\
\{1+\pi^q\alpha^p, \beta_1, \cdots, \beta_{j-1}\}_m&\mod V^q\mathscr{M}^r_{m} \mapsto \overline{\alpha}^p\cdot\left(\bigwedge^{j-1}_{i=1} d\log \overline{{\beta}}_i\right)\mod \mathcal{B}^{j-1}_{Y|mD}\quad(p \nmid q)\\
\rho^{q,j}_{2,mD}:{\{\gr^q_{V/U}\mathscr{M}^j_{m}\}}_m&\xrightarrow{\cong} {\left\{\left(\omega^{j-2}_{Y|mD}/\mathcal{Z}^{j-2}_{Y|mD}\right)\right\}}_m,\\
\{1+\pi^q\alpha, \beta_1, \cdots, \beta_{j-2},\pi\}_m&\mod U^{q+1}\mathscr{M}^r_{m} \mapsto \overline{\alpha}\cdot\left(\bigwedge^{j-2}_{i=1} d\log \overline{{\beta}}_i\right)\mod \mathcal{Z}^{j-2}_{Y|mD}.
\end{align*}

We put $e'=pe/(p-1)$ an integer and $p|e'$. 
\begin{Lem}\label{Lem maps}(cf. Lemma 9.1.4, \cite{Sa2})Assume that $1 \leq q \leq e'-1$. Then:
\begin{enumerate}
 \item\; $f^{q,r}_{m,D}$ annihilates the subsheaf of ${\{U^{q}\mathscr{M}^r_{m}\}}_m \otimes {\{U^{q}M^{d-r+1}_{Y, m'D}\}}_{m'}$
 generated by {\footnotesize${\{U^{q+1}\mathscr{M}^r_{m}\}}_m \otimes {\{U^{e'-q}M^{d-r+1}_{Y, m'D}\}}_{m'}$, ${\{U^{q}\mathscr{M}^r_{m}\}}_m \otimes {\{U^{e'-q+1}M^{d-r+1}_{Y, m'D}\}}_{m'}$, ${\{V^{q}\mathscr{M}^r_{m}\}}_m \otimes {\{T^{e'-q}M^{d-r+1}_{Y, m'D}\}}_{m'}$ and ${\{T^{q}\mathscr{M}^r_{m}\}}_m \otimes {\{V^{e'-q}M^{d-r+1}_{Y, m'D}\}}_{m'}$.}
 \item\; The composite map 
{\footnotesize \begin{align*}&{\{\omega^{r-1}_{Y|mD}/\mathcal{Z}^{r-1}_{Y|mD}\}}_m \otimes {\{\left(\omega^{d-r}_Y/\mathcal{Z}^{d-r}_Y\right)\otimes_{ \mathscr{O}_Y} \mathscr{O}_Y(m'D)\}}_{m'}\\
   &\xrightarrow{ (\rho^{q,r}_{1,mD}\otimes \rho^{e'-q, d-r+1}_{3,m'})^{-1} } {\{\gr^q_{U/T}\mathscr{M}^r_{m}\}}_m \otimes {\{\gr^{e'-q}_{V/U}M^l_{Y, m'D}\}}_{m'}\\
   &\xrightarrow{f^{q,r}_{m, D}} \mu' \otimes \left(\omega^{N}_Y/\mathcal{B}^N_Y\right)
\end{align*}}
sends a local section ${\{x_m \otimes \gamma_{1, m}\}}_m \otimes {\{y_{m} \otimes  \gamma_{2, m}\}}_m$ to $w\otimes_k (-1)^r $.
Similarly, the composite map 
{\footnotesize\begin{align*}&{\{\omega^{r-2}_{Y|mD}/\mathcal{Z}^{r-2}_{Y|mD}\}}_m \otimes {\{\left(\omega^{d-r+1}_Y/\mathcal{Z}^{d-r+1}_Y\right)\otimes_{ \mathscr{O}_Y} \mathscr{O}_Y(m'D)\}}_{m'}\\
   &\xrightarrow{ (\rho^{q,r}_{3,mD}\otimes \rho^{e'-q, d-r+1}_{1,m'})^{-1} } {\{\gr^q_{V/U}\mathscr{M}^r_{m}\}}_m \otimes {\{\gr^{e'-q}_{U/T}M^l_{Y, m'D}\}}_{m'}\\
   &\xrightarrow{f^{q,r}_{m, D}} \mu' \otimes \left(\omega^{N}_Y/\mathcal{B}^N_Y\right)
\end{align*}}
sends a local section ${\{x_m \otimes  \gamma_{1, m}\}}_m \otimes {\{y_{m} \otimes  \gamma_{2, m}\}}_m$ to $w\otimes_k (-1)^N$.
\item\;If $q$ is prime to $p$, then the composite map 
{\footnotesize\begin{align*}&{\{\omega^{r-1}_{Y|mD}/\mathcal{Z}^{r-1}_{Y|mD}\}}_m \otimes {\{\left(\omega^{d-r+1}_Y/\mathcal{Z}^{d-r+1}_Y\right)\otimes_{ \mathscr{O}_Y} \mathscr{O}_Y(m'D)\}}_{m'}\\
   &\xrightarrow{ (\rho^{q,r}_{2,mD}\otimes \rho^{e'-q, d-r+1}_{2,m'})^{-1} } {\{\gr^q_{T/V}\mathscr{M}^r_{m}\}}_m \otimes {\{\gr^{e'-q}_{T/V}M^l_{Y, m'D}\}}_{m'}\\
   &\xrightarrow{f^{q,r}_{m, D}} \mu' \otimes \left(\omega^{N}_Y/\mathcal{B}^N_Y\right)
\end{align*}}
sends a local section ${\{x_m \otimes \gamma_{1, m}\}}_m \otimes {\{y_{m} \otimes  \gamma_{2, m}\}}_m$ to $w\otimes_k (-1)^{N+r}q$.
\end{enumerate}
\end{Lem}
\begin{proof} 
The proof of these assertions are straight forward. \end{proof}

\subsection{Proof of Theorem $6.1$}\mbox{}

In this subsection, we prove  Theorem \ref{D-U} by a similar arguments as for \S9.2, \cite{Sa2}.  Since the sheaves $U^1\mathscr{M}^r_{m}$ and $U^1M^{d-r+1}_{Y, m'D}$ are finitely sucessive extension of coherent $\left(\mathscr{O}_Y\right)^p$-modules, then we have the finiteness of the groups in the pairing in Theorem \ref{D-U} for each $m$. We introduce a descending filtration $Z^{\bullet}\mathscr{M}^q_{m}$ on $U^1\mathscr{M}^q_{m}$ and $Z^{\bullet}M^q_{Y, m'D}$ on $U^1M^q_{Y, m'D}$ defined as follows:
\[ Z^s\mathscr{M}^q_{m}:=\begin{cases}U^n\mathscr{M}^q_{m}\quad({\rm if}\;s \equiv1\mod 3, n=(s+2)/3), \\T^n\mathscr{M}^q_{m}\quad({\rm if}\;s \equiv2\mod 3, n=(s+1)/3),\\V^n\mathscr{M}^q_{m}\quad({\rm if}\;s \equiv0\mod 3, n=s/3), \end{cases}\]
\mbox{}\\
\[ Z^sM^q_{Y, m'D}:=\begin{cases}U^nM^q_{Y, m'D}\quad({\rm if}\;s \equiv1\mod 3, n=(s+2)/3),\\T^nM^q_{Y, m'D}\quad({\rm if}\;s \equiv2\mod 3, n=(s+1)/3)\\V^nM^q_{Y, m'D}\quad({\rm if}\;s \equiv0\mod 3, n=s/3).\end{cases}\]
\mbox{}\\
We have that the following Proposition.
\begin{Prop}(\cite{Sa2}, \S9.2)
Let $l$ be an integer $1 \leq l \leq 3e'-3$.
We have the following:

\noindent
(i)\;\hbox{There is a map}\quad \[H^N(Y, (U^1\mathscr{M}^l_{m}/Z^{l+1}\mathscr{M}_m)\otimes Z^{3e'-2-l}M^{l'}_{Y,m'D}\longrightarrow \mu.\]
Here this map is induced by a map 
\begin{align*}(\alpha)\quad\quad&H^N(Y,\; U^1\mathscr{M}^r_{m}\otimes Z^{3e'-2-r}M^{r'}_{Y,m'D})\\
&\longrightarrow H^N(Y, U^1\mathscr{M}^r_{m}\otimes U^1M^{r'}_{Y,m'D})\\&\longrightarrow \mu \otimes H^{N+1}(Y,\nu^N_Y)\cong \mu,
\end{align*}

\noindent
(ii)\;\hbox{There is a map}\[H^N(Y, \gr_{Z}\mathscr{M}^l_{m}\otimes \gr_Z^{3e'-2-l}M^{l'}_{Y,m'D})\longrightarrow \mu.\]

Here this map is induced by a map 

\begin{align*}&H^N(Y,\; \gr_Z^l\mathscr{M}^r_{m}\otimes Z^{3e'-2-r}M^{r'}_{Y,m'D})\\
&\longrightarrow H^N\left(Y, (U^1\mathscr{M}^r_{m}/Z^{r+1}\mathscr{M}_m)\otimes Z^{3e'-2-r}M^{r'}_{Y,m'D}\right)\longrightarrow \mu.
\end{align*}

(iii)\;\hbox{There is a map}\[H^N\left(Y,\;\;\frac{(U^1\mathscr{M}^r_{m}/Z^{l+1}\mathscr{M}_m)\otimes Z^{3e'-2-l}M^{l'}_{Y,m'D}}{\gr_{Z}\mathscr{M}^l_{m}\otimes Z^{3e'-2-l}M^{l'}_{Y,m'D}}\right)\longrightarrow \mu.\]

We put $\mathfrak{I}:= \frac{(U^1\mathscr{M}^r_{m}/Z^{l+1}\mathscr{M}_m)\otimes Z^{3e'-2-l}M^{l'}_{Y,m'D}}{\gr_{Z}\mathscr{M}^l_{m}\otimes Z^{3e'-2-l}M^{l'}_{Y,m'D}}$ for simplicity.\;
If $r \geq 2$, there is a commutative diagram:

{\footnotesize\[ \xymatrix@M=9pt{H^N(Y,\; \gr_{Z}\mathscr{M}^l_{m}\otimes \gr_Z^{3e'-2-l}M^{l'}_{Y,m'D})\oplus H^N(Y,\; \frac{U^1\mathscr{M}^r_{m}}{Z^{l+1}\mathscr{M}_m}\otimes Z^{3e'-2-l}M^{l'}_{Y,m'D})\ar[d]^-{(ii)\oplus(i)}\ar[r]&H^N(Y,\;\mathfrak{I})\ar[d]^-{(iii)}\\
\mu\oplus\mu\ar[r]^-{{\rm product}}&\mu.
}\]}

The map $(iii)$ is induced by the map $(i)$.

\end{Prop}
\begin{proof}
The proof is the same way as for Lemma 9.2.1 in \cite{Sa2}. We show only the case (i). One can show that the cases (ii) and (iii) are similar way.
There is a map
\begin{align*}
F:\;H^N\left(Y,\;Z^l \mathscr{M}_m^r\otimes Z^{3e'-2-l}M^{r'}_{Y,m'D}\right)&\longrightarrow H^N\left(Y,\;U^1\mathscr{M}_m^r\otimes Z^{3e'-2-l}M^{r'}_{Y,m'D}\right)\\
&\xrightarrow{(\alpha)} \mu. 
\end{align*}
Theorem \ref{ERL} implies that $F=\chi\circ f^{q,r}_D$.
We consider the following short exact sequence
\begin{align*}0\longrightarrow Z^{l+1} \mathscr{M}_m^r\otimes Z^{3e'-2-l}M^{r'}_{Y,m'D}\longrightarrow &U^1 \mathscr{M}_m^r\otimes Z^{3e'-2-l}M^{r'}_{Y,m'D}\\
& \longrightarrow \left(U^1\mathscr{M}_m^r/Z^{l+1} \mathscr{M}_m^r\right)\otimes Z^{3e'-2-l}M^{r'}_{Y,m'D}\longrightarrow 0.\end{align*}

By this exact sequence, we have the following diagram:

{\tiny\[ \xymatrix@M=4pt{
H^N\left(Z^{l+1} \mathscr{M}_m^r\otimes Z^{3e'-2-l}M^{r'}_{Y,m'D}\right)
\ar[r]^-{f_1}& H^N\left(U^1 \mathscr{M}_m^r\otimes Z^{3e'-2-l}M^{r'}_{Y,m'D}\right)\ar[r]^-{f_2}\ar[d]^-{g_1}& H^N\left(U^1 \mathscr{M}_m^r\otimes Z^{3e'-2-l}M^{r'}_{Y,m'D}\right)\ar[ld]^-{g_2}\\
&\mu.&
}\]}
Here the map $f_1$ is surjective by SubLemma 8.5.2 (2) in \cite{Sa2} and $g_1 \circ f_1$ is a zero map by $f^{q,r}_D$ annihilates the subsheaf $Z^{l+1} \mathscr{M}_m^r\otimes Z^{3e'-2-l}M^{r'}_{Y,m'D}$  by Lemma \ref{Lem maps}\;(1). Then we get the map $g_2$. 
\end{proof}

 By the same arguments as in \cite{Sa2}, $\S9.2$, we obtain the following pairings and Lemma  below:
\[b^{i,l}_D : H^i(Y, U^1\mathscr{M}^r_{m}/Z^{l+1}\mathscr{M}^r_{m}) \times H^{N-i}(Y, Z^{3e'-2-l}M^{d-r+1}_{Y, m'D}) \longrightarrow \mu,\]
\[c^{i,l}_D : H^i(Y, \gr^l_Z\mathscr{M}^r_{m}) \times H^{N-i}(Y, \gr^{3e'-2-l}_Z M^{d-r+1}_{Y, m'D}) \longrightarrow \mu,\]
for $i$ and $l$ with $1 \leq l \leq 3e'-3$.

\begin{Lem}(cf. Lemma 9.2.7, \cite{Sa2}) The pairing $c^{i, l}_D$ is non-degenerate for any $i$ and $l$ with $1 \leq l \leq 3e'-3$. 
\end{Lem}
\begin{proof} See the proof of Lemma 9.2.7 in \cite{Sa2} for the brief explanation of a linear Cartier isomorphism $C_{{\rm lin}}$. We consider the following maps:
\[{F_{Y/s}}_*(\omega^{r-1}_{Y|mD}/\mathcal{Z}^{r-1}_{Y|mD}) \times {F_{Y/s}}_*(\omega^{d-r}_{Y}/\mathcal{Z}^{d-r}_{Y} \otimes \mathscr{O}_Y(mD))\longrightarrow \omega^{d}_{\tilde{Y}},\]\[(x\otimes \gamma_1,y\otimes \gamma_2)\mapsto C_{\lin}(\gamma_1 \cdot \gamma_2\cdot dx \land y),\]
 \[{F_{Y/s}}_*(\omega^{r-1}_{Y|mD}/\mathcal{Z}^{r-1}_{Y|mD}) \times {F_{Y/s}}_*(\omega^{d-r+1}_{Y}/\mathcal{Z}^{d-r+1}_{Y} \otimes \mathscr{O}_Y(mD))\longrightarrow \omega^{d}_{\tilde{Y}},\]\[(x\otimes \gamma_1,y\otimes \gamma_2)\mapsto C_{\lin}(\gamma_1 \cdot \gamma_2\cdot x \land y),\] 
  \[{F_{Y/s}}_*(\omega^{r-2}_{Y|mD}/\mathcal{Z}^{r-2}_{Y|mD}) \times {F_{Y/s}}_*(\omega^{d-r+1}_{Y}/\mathcal{Z}^{d-r+1}_{Y} \otimes \mathscr{O}_Y(mD))\longrightarrow \omega^{d}_{\tilde{Y}},\]\[(x\otimes \gamma_1,y\otimes \gamma_2)\mapsto C_{\lin}(\gamma_1 \cdot \gamma_2\cdot x \land dy).\]
  By 3.1, \cite{Hy} and the Serre-Hartshorne duality, $\omega^d_{\tilde{Y}}$ is a dualizing sheaf on $\tilde{Y}$ in the sense of Definition, p.241, \cite{Har}. We put $q$ the maximal integer such that $3(q-1)<l$. 
 Then the pairing 
{\footnotesize\[ \xymatrix@M=10pt{ H^N(Y, \gr^l_Z \mathscr{M}^r_m) \times H^{N-i}(Y, \gr^{3e'-2-l}_Z M^{d-r+1}_{Y, m'D})\ar[r]^-{f^{q, l}_{mD}}& \mu\otimes H^N(Y, \omega^N_Y/ \mathcal{B}^N_Y)\ar[r]^-{\id \otimes \tr'_Y}&\mu \otimes k }\]}
 is a non-degenerate pairing. Here $\tr'_Y$ is the $k$-linear trace map (see \S9.3, \cite{Sa2}). We show the non-degeneracy of $c^{i, l}_D$ by the commutative diagrams:
 {\footnotesize\[ \xymatrix@M=10pt{ H^N(Y, \gr^l_Z\mathscr{M}^r_m \otimes \gr^{3e'-2-l}_Z M^{d-r+1}_{Y, m'D})  \ar[r]^-{f^{q,l}_{mD} }\ar[d]^-{(*)}&  \mu \otimes H^N(Y, \omega^N_Y/ \mathcal{B}^N_Y) \ar[r]^-{\id \otimes \tr'_Y} \ar[d]^-{\chi} & \mu \otimes k \ar[d]^-{\id \otimes \tr_{k/ \mathbb{F}_p} }\\
  \mu & \mu \otimes H^{N+1}(Y, \nu^N_Y) \ar[l]^-{\id \otimes \tr_Y} \ar[r]^-{\id \otimes \tr_Y}& \mu \otimes \mathbb{F}_p, } \]}
where the left square commutes by Theorem \ref{ERL} and the right square commutes by Remark 2.2.6(4), \cite{Sa2} and 3.4.1, \cite{Sa1}. The morphism $(*)$ is given by the same arguments as for Lemma 9.2.1 (2), \cite{Sa2}. 
This completes the proof of Lemma  and Theorem \ref{D-U}. 
\end{proof}

\section{\textbf{Proof of Theorem $\ref{Main Thm}$}}
First, we define the following pairing, which is a generalization of Milne's pairing of two-term complexes:
\begin{Def}(cf.  \cite{Mil}, p.175, \cite{Mil2}, p.271)\\
Let \[\mathscr{F}^{\bullet}:=\left[\mathscr{F}^0\xrightarrow{d^{0}_{\mathscr{F}}}\mathscr{F}^1\xrightarrow{d^{1}_{\mathscr{F}}}\cdots \xrightarrow{d^{n-1}_{\mathscr{F}}}\mathscr{F}^{n}\rightarrow \cdots\right],\]
\[ \mathscr{G}^{\bullet}:=\left[\mathscr{G}^0\xrightarrow{d^{0}_{\mathscr{G}}}\mathscr{G}^1\xrightarrow{d^{1}_{\mathscr{G}}}\cdots \xrightarrow{d^{n-1}_{\mathscr{G}}}\mathscr{G}^{n}\rightarrow \cdots\right],\]
and  \[\mathscr{H}^{\bullet}:=\left[\mathscr{H}^0\xrightarrow{d^{0}_{\mathscr{H}}}\mathscr{H}^1\xrightarrow{d^{1}_{\mathscr{H}}}\cdots \xrightarrow{d^{n-1}_{\mathscr{H}}}\mathscr{H}^{n}\rightarrow \cdots\right]\]
be complexes of sheaves. A pairing of complexes $\langle\cdot, \cdot \rangle:\mathscr{F}^{\bullet} \times \mathscr{G}^{\bullet} \longrightarrow \mathscr{H}^{\bullet}$
is a system of pairings
\[\{\langle\cdot, \cdot \rangle_{i, j}^{k}: \mathscr{F}^{i} \times \mathscr{G}^{j} \longrightarrow \mathscr{H}^{k}\}_{i,j,k}\]
such that
{\footnotesize\[(\diamond)\quad\quad \sum_{i+j=n} d_{\mathscr{H}}^n\left(\left\langle x_i, y_j\right\rangle^n_{i, j} \right)=\sum_{i+j=n}\left\{\left\langle d^i_{\mathscr{F}}(x_i), y_j \right\rangle^{n+1}_{i+1, j}+(-1)^i \left\langle x_i, d^j_{\mathscr{G}}(y_j) \right\rangle^{n+1}_{i, j+1}\right\}\]}
for all $n\geq 0$, $x_i \in \mathscr{F}^{i}$, and $y_j \in \mathscr{G}^{j} $.
\end{Def}

\begin{Rem}
Milne defines a pairing of two term complexes in \cite{Mil}, p.$175$ or \cite{Mil2}, p.$217$. In \cite{JSZ}, they have used  Milne's two-term pairing to prove their results.

\end{Rem}
\noindent
The aim of this section is to construct the following pairing
\begin{equation}\lim_{\stackrel{\longleftarrow}{m}}H^i(Y, \mathfrak{T}_n(r)_{X|mD}^{syn})  \times H_{Y \cap U}^{2d+1-i}(U, \mathfrak{T}_n(d-r)_U) \longrightarrow \mathbb{Z}/p^n, \end{equation}
which  is equivalent to giving the following pairing by the  proper base change theorem:
\begin{equation}\lim_{\stackrel{\longleftarrow}{m}}H^i(Y, \mathfrak{T}_n(r)_{X|mD}^{syn}) \times H^{2d+1-i}(Y, Ri^{!}R\alpha_*\mathfrak{T}_n(d-r)_U) \longrightarrow \mathbb{Z}/p^n.\end{equation}
For this purpose, we construct a morphism 
\[(\Upsilon)\quad{\{\mathfrak{T}_n(r)_{X|mD}^{syn}\}}_m \otimes^{\mathbb{L}} Ri^{!}R\alpha_*\mathfrak{T}_n(d-r)_U \longrightarrow \nu^{d-1}_{Y,n}[-d-1].\]

\noindent
We first consider a presentation as complex of $Ri^{!}R\alpha_*\mathfrak{T}_n(d-r)_U$.
We have $Ri^{!}R\alpha_*\mathfrak{T}_n(d-r)_U=R\alpha'_*Ri'^{!}\mathfrak{T}_n(d-r)_U$ since the diagram in {\textbf Setting} ($\clubsuit$) is commutative. There is a spectral sequence $E_2^{p,q}=R^p\alpha'_*\mathcal{H}^q(Ri'^{!}\mathfrak{T}_n(d-r)_U) \Longrightarrow \mathcal{H}^{p+q}R\alpha'_*Ri'^{!}\mathfrak{T}_n(d-r)_U$ and we have $E_2^{p,q}=0$ for all $p \geq 2$. We introduce an augmented complex $s_n(q)^{\vee,\;\bullet}_{X|m'D}$ to construct a pairing $(\Upsilon)$:
\begin{Def}(cf.\;\cite{Tsu0},\;\cite{Tsu1},\;\cite{Tsu2})\\
We define a complex $s_n(q)^{\vee,\;\bullet}_{X|m'D}$, which degree $i$-part defined by {\footnotesize\[{s_n(q)^{\vee,\;i}_{X|m'D}}:=\left(\omega^i_{Z_n}\otimes \mathscr{O}_{Z_n}((m'+i)\mathscr{D}_n)\otimes J^{[q-i]}_{\mathscr{E}_n}\right)\oplus \left(\omega^{i-1}_{Z_n}\otimes \mathscr{O}_{Z_n}((m'+i)\mathscr{D}_n)\otimes \mathscr{O}_{\mathscr{E}_n} \right).\] }Its differential is given by $d^i(x,y):=(dx, (1-\varphi_q)x-dy)$. Here we take
$m'$ satisfies $m'+i-m \leq 0$. \end{Def}

We consider a complex\;(as a pro-system)
{\tiny\[\mathscr{G}_{m'}^{\bullet}:= \left[ \frac{ \CoKer\Big({s_n(d-r)}^{\vee\;d-r-1}_{X|m'D}\longrightarrow {s_n(d-r)}^{\vee\;d-r}_{X|m'D}\Big)}{{\rm \mathfrak{Q}}}\xrightarrow{\frak{d}}{s_n(d-r)}^{\vee\;d-r+1}_{X|m'D}\longrightarrow  {s_n(d-r)}^{\vee\;d-r+2}_{X|m'D}\longrightarrow \cdots \right], \]}where $\mathfrak{Q}$ is $\Ker\left(\mathcal{H}^{d-r}({s_n(d-r)}^{\vee\;\bullet}_{X|m'D})\longrightarrow \nu^{d-r-1}_{Y,n}\otimes_{\mathscr{O}_Y}\mathscr{O}_{Y}(m'D_s)\right)$. Here the first term of $\mathscr{G}_{m'}^{\bullet}$ is a degree $d-r+1$. The transition map of $\mathscr{G}_{m'}^{\bullet}$ is a differential $d$ of ${s_n(d-r)}^{\vee\;\bullet}_{X|m'D}$. We check the well-definedness of the map $\frak{d}$. It is easy to see that the map $d^{d-r}$ induces $\CoKer\Big({s_n(d-r)}^{\vee\;d-r-1}_{X|m'D}\longrightarrow {s_n(d-r)}^{\vee\;d-r}_{X|m'D}\Big) \longrightarrow {s_n(d-r)}^{\vee\;d-r+1}_{X|m'D}$. Then we obtain the following  representation (quasi-isomorphism) of $R\alpha'_*Ri'^{!}\mathfrak{T}_n(d-r)_U$ by using the calculation of the cohomology sheaf of $Ri'^{!}\mathfrak{T}_n(d-r)_U$ in \cite{Sa2}:
\[R\alpha'_*Ri'^{!}\mathfrak{T}_n(d-r)_U
\cong \lim_{\stackrel{\longrightarrow}{m'}}\mathscr{G}_{m'}^{\bullet}.\]
Taking the cohomology of the right hand side, one can check this quasi-isomorphism.

Next we consider $\mathfrak{T}_n(r)_{X|mD}^{syn}$. By the definition  and Lemma . 
We have the following presentation of $\mathfrak{T}_n(r)_{X|mD}^{syn}$:
{\footnotesize \begin{align*}
\mathfrak{T}_n(r)_{X|mD}^{syn}=\Big[s_n(r)_{X|mD}^0\rightarrow s_n(r)_{X|mD}^1&\rightarrow \cdots
\rightarrow  s_n(r)_{X|mD}^{r-1}\\
& \rightarrow \Ker\left(\Ker(s_n(r)_{X|mD}^r\rightarrow s_n(r)_{X|mD}^{r+1}) \rightarrow \nu_{Y|mD,n}^{r-1}\Big) \right].\end{align*}}
We define the following maps by a similar way as for product structure of syntomic complex (cf.\cite{Tsu1}, \S2.2):
{\footnotesize\begin{multline}s_n(r)^i_{X|mD} \otimes^{\mathbb{L}} {s_n(d-r)}^{\vee\;i'}_{X|m'D}\\ \longrightarrow \left(\omega^{i+i'}_{Z_n}\otimes \mathscr{O}_{Z_n}((m'+i'-m)\mathscr{D}_n)\otimes J^{[d-i-i']}_{\mathscr{E}_n}\right)\oplus \left(\omega^{i+i'-1}_{Z_n}\otimes \mathscr{O}_{Z_n}((m'+i'-m)\mathscr{D}_n)\otimes \mathscr{O}_{\mathscr{E}_n}\right).\end{multline}}
Since $m'+i'-m \leq 0$ by assumption of $m'$,  the target of this map is contained in ${\mathcal{S}_n(d)}_{(X,M_X)}^{i+i'}$.

\begin{Lem} We have the morphism 
\[{\{\mathfrak{T}_n(r)_{X|mD}^{syn}\}}_m \otimes^{\mathbb{L}} Ri^{!}R\alpha_*\mathfrak{T}_n(d-r)_U \longrightarrow \mathscr{H}^{\bullet}.\]
\end{Lem}
\begin{proof}
We put 
{\tiny\[\mathscr{F}_m^{\bullet}:=\left[s_n(r)_{X|mD}^0 \rightarrow s_n(r)_{X|mD}^1\rightarrow \cdots \rightarrow  s_n(r)_{X|mD}^{r-1} \rightarrow \Ker\left(\Ker(s_n(r)_{X|mD}^r\rightarrow s_n(r)_{X|mD}^{r+1}) \rightarrow \nu_{Y|mD,n}^{r-1}\right)\right],\]}
{\tiny\[\mathscr{G}_{m'}^{\bullet}:= \left[ \frac{ \CoKer\Big({s_n(d-r)}^{\vee\;d-r-1}_{X|m'D}\longrightarrow {s_n(d-r)}^{\vee\;d-r}_{X|m'D}\Big)}{{\rm \mathfrak{Q}}}\longrightarrow {s_n(d-r)}^{\vee\;d-r+1}_{X|m'D}\longrightarrow \cdots \right], \]}
and
\[\mathscr{H}^{\bullet}:=\left[\frac{ \CoKer\left({\mathcal{S}_n(d)}_{(X,M_X)}^{d-1}\rightarrow {\mathcal{S}_n(d)}_{(X,M_X)}^{d}\right) }{{FM}_{X}^d}\rightarrow {\mathcal{S}_n(d)}_{(X,M_X)}^{d+1}\rightarrow \cdots \right].\]
Here ${\mathcal{S}_n(d)}_{(X,M_X)}^{\bullet}$ is syntomic complex defined by Tsuji (see \cite{Tsu0}, \cite{Tsu1} and \cite{Tsu2}).
The pairing $\langle \cdot, \cdot \rangle: \mathscr{F}_m \times \mathscr{G}_{m'} \longrightarrow \mathscr{H}$ is a product map of syntomic complexes defined in Definition .
We have to check the pairing $\langle,\cdot, \cdot \rangle: \mathscr{F}_m \times \mathscr{G}_{m'} \longrightarrow \mathscr{H}$ satisfies the relation $(\diamond)$ for each $m$.
It is enough to show that the equation \[(*)\quad d_{\mathscr{H}}^n (\left\langle x_i, y_j\right\rangle^n_{i, j}) =\left\langle d^i_{\mathscr{F}_m}(x_i), y_j \right\rangle^{n+1}_{i+1, j}+(-1)^i \left\langle x_i, d^j_{\mathscr{G}_{m'}}(y_j) \right\rangle^{n+1}_{i, j+1}.\]
We will check the relation $(*)$ for the following two cases {\textbf (I), (II)}.  \\

{\textbf(I)}\; The case $s_n(r)_{X|mD}^i \otimes {s_n(d-r)}^{\vee\;j}_{X|m'D} \longrightarrow {\mathcal{S}_n(d)}_{(X,M_X)}^{i+j}$.\\
The calculation of this case is similar as for product structure of syntomic complexes\;(\cite{Tsu1}). 
We denote by $x_i=(x_{i1}, x_{i2}) \in s_n(r)_{X|mD}^i$, $y_j=(y_{j1}, y_{j2}) \in {s_n(d-r)}^{\vee\;j}_{X|m'D}$.
We have 
\begin{align*}
d&_{\mathscr{H}}^n (\left\langle x_i, y_j\right\rangle^n_{i, j})=d_{\mathscr{H}}^n \left(x_{i1}y_{j1},\quad(-1)^ix_{i1}y_{j2}+x_{i2}\varphi_{d-r}(y_{j1})\right)\\
&=\left(d(x_{i1}y_{j1}),\;(1-\varphi_d)(x_{i1}y_{j1})-d\{(-1)^ix_{i1}y_{j2}+x_{i2}\varphi_{d-r}(y_{j1})\}\right)\\
&=\left((-1)^ix_{i1}dy_{j1}+y_{j1}dx_{i1},\quad(1-\varphi_d)(x_{i1}y_{j1})-(-1)^id(x_{i1}y_{j2})-d(x_{i2}\varphi_{d-r}(y_{j1}))\right).
\end{align*}
On the other hand, we can calculate the RHS of $(*)$ as follows:
{\footnotesize\begin{align*}
&\left\langle d^i_{\mathscr{F}_m}(x_i), y_j \right\rangle^{n+1}_{i+1, j}+(-1)^i \left\langle x_i, d^j_{\mathscr{G}_{m'}}(y_j) \right\rangle^{n+1}_{i, j+1}\\
&=\left\langle\left(dx_{i1},\;\;(1-\varphi_r)(x_{i1})-dx_{i2}\right),\quad y_j \right\rangle^{n+1}_{i+1,j}+(-1)^i\left\langle x_i,\quad (dy_{j1},\;\;(1-\varphi_{d-r})(y_{j1})-dy_{j2}) \right\rangle^{n+1}_{i,j+1}\\
&=\left((-1)^ix_{i1}dy_{j1}+y_{j1}dx_{i1},\quad(1-\varphi_d)(x_{i1}y_{j1})-(-1)^id(x_{i1}y_{j2})-d(x_{i2}\varphi_{d-r}(y_{j1}))\right).
\end{align*}}
Then this case, the map $s_n(r)_{X|mD}^i \otimes {s_n(d-r)}^{\vee\;j}_{X|m'D} \longrightarrow {\mathcal{S}_n(d)}_{(X,M_X)}^{i+j}$ satisfies the relation $(*)$.

{\textbf(II)}\; The case  {\footnotesize\begin{align*}&\Ker\left(\Ker(s_n(r)_{X|mD}^r\rightarrow s_n(r)_{X|mD}^{r+1}) \rightarrow \nu_{Y|mD,n}^{r-1}\right)\otimes \frac{\CoKer\Big({s_n(d-r)}^{\vee\;d-r-1}_{X|m'D}\longrightarrow {s_n(d-r)}^{\vee\;d-r}_{X|m'D}\Big)}{\mathfrak{Q}}\\
 & \longrightarrow \frac{\CoKer\left({\mathcal{S}_n(d)}_{(X,M_X)}^{d-1}\rightarrow {\mathcal{S}_n(d)}_{(X,M_X)}^{d}\right)}{FM^d_{X,n}}.\end{align*}}
First we check the well-definedness of the map (we denote by this map $(\bigstar)$)
{\footnotesize \begin{align*}&\Ker\left(\Ker(s_n(r)_{X|mD}^r\rightarrow s_n(r)_{X|mD}^{r+1}) \rightarrow \nu_{Y|mD,n}^{r-1}\right)\otimes \CoKer\Big({s_n(d-r)}^{\vee\;d-r-1}_{X|m'D}\longrightarrow {s_n(d-r)}^{\vee\;d-r}_{X|m'D}\Big)\\
 & \longrightarrow \CoKer\left({\mathcal{S}_n(d)}_{(X,M_X)}^{d-1}\rightarrow {\mathcal{S}_n(d)}_{(X,M_X)}^{d}\right).\end{align*}}
The element $(z_1, z_2)$ is $\Big(d(z'_1), \;\;(1-\varphi_{d-r})(z'_1)-d(z'_2)\Big)$ for some $(z'_1,z'_2) \in {s_n(d-r)}^{\vee\;d-r-1}_{X|m'D}$. We calculate the image of $(x_1,x_2)\otimes (z_1, z_2)$ under the product map:
\begin{align*}&(x_1,x_2)\otimes (z_1, z_2)=(x_1,x_2)\otimes \Big(d(z'_1), \;\;(1-\varphi_{d-r})(z'_1)-d(z'_2)\Big)\\
& \mapsto\Big(x_1d(z'_1),\;\;(-1)^r x_1\cdot\{(1-\varphi_{d-r})z'_1-d(z'_2)\}+x_2\varphi_{d-r}(dz'_1) \Big).
\end{align*}
It is enough to find a solution $(\eta_1,\;\eta_2) \in $ satisfies the following two equations:
\begin{align*}
&(1)\;d\eta_1=x_1d(z'_1),\\
 &(2)\;(1-\varphi_d)\eta_1-d\eta_2=(-1)^r x_1\cdot\{(1-\varphi_{d-r})z'_1-d(z'_2)\}+x_2\varphi_{d-r}(dz'_1).
\end{align*}
We take $\eta_1:=x_1z'_1$, this element satisfies the condition $(1)$ because $d(x_1)=0$. Next we put $\eta_2:=(-1)^r\{x_1z'_2-x_2\varphi_{d-r}(z'_1)\}$, this satisfies the condition $(2)$ by the direct calculation. Then the pair $(\eta_1, \eta_2)$ is the solution of  the equation $(1)$ and $(2)$. Lastly, we have the following map, which induced by the map $(\bigstar)$: 
 {\footnotesize\begin{align*}&\Ker\left(\Ker(s_n(r)_{X|mD}^r\rightarrow s_n(r)_{X|mD}^{r+1}) \rightarrow \nu_{Y|mD,n}^{r-1}\right)\otimes \frac{\CoKer\Big({s_n(d-r)}^{\vee\;d-r-1}_{X|m'D}\longrightarrow {s_n(d-r)}^{\vee\;d-r}_{X|m'D}\Big)}{\mathfrak{Q}}\\
 & \longrightarrow \frac{\CoKer\left({\mathcal{S}_n(d)}_{(X,M_X)}^{d-1}\rightarrow {\mathcal{S}_n(d)}_{(X,M_X)}^{d}\right)}{FM^d_{X,n}}.\end{align*}}
Here we used the fact that $FM^d_{X,n} \cong \Ker\left(\mathcal{H}^d\left({\mathcal{S}_n(d)}_{(X,M_X)}^{\bullet}\right)\rightarrow \nu^{d-1}_{Y,n}\right)$\;(see \cite{Sa3}, Theorem 3.4).
We check that this map $(\bigstar)$ satisfies the relation $(*)$.  It suffices to show that the relation 
$d_{\mathscr{H}}^n\left(\left\langle x_i, y_j\right\rangle^n_{i, j} \right)= (-1)^i \left\langle x_i, d^j_{\mathscr{G}_{m'}}(y_j) \right\rangle^{n+1}_{i, j+1}$. This relation follows from straightforward computations.
Then we have a morphism ${\{\mathfrak{T}_n(r)_{X|mD}^{syn}\}}_m \otimes^{\mathbb{L}} Ri^{!}R\alpha_*\mathfrak{T}_n(d-r)_U \longrightarrow \mathscr{H}^{\bullet}$. For the complex $\mathscr{H}^{\bullet}$, we have $\mathcal{H}^{d+1}(\mathscr{H}^{\bullet})[-d-1]\cong \nu_{Y,n}^{d-1}[-d-1]$.  Thus we obtain the map $(\Upsilon)$ by the following composition:
\[{\{\mathfrak{T}_n(r)_{X|mD}^{syn}\}}_m \otimes^{\mathbb{L}} Ri^{!}R\alpha_*\mathfrak{T}_n(d-r)_U \longrightarrow \mathscr{H}^{\bullet} \rightarrow \mathcal{H}^{d+1}(\mathscr{H}^{\bullet})[-d-1]\cong \nu_{Y,n}^{d-1}[-d-1]\]\end{proof} We obtain a pairing \begin{equation}\lim_{\stackrel{\longleftarrow}{m}}H^i(Y, \mathfrak{T}_n(r)_{X|mD}^{syn})  \times H_{Y \cap U}^{2d+1-i}(U, \mathfrak{T}_n(d-r)_U) \longrightarrow \mathbb{Z}/p^n.\end{equation} This completes the proof of the main result \ref{Main Thm}\;(1). In the next section, we will prove that the main result \ref{Main Thm}\;(2).  


\section{\textbf{Proof of non-degeneracy of the pairing in Theorem $\ref{Main Thm}$}}
We start the proof of Theorem \ref{Main Thm}(2) by the same strategy of \cite{Sa2} in \S $10$.  We can reduce the problem to the case $n=1$ by the distinguished triangle in Proposition .

\subsection{Descending induction on $r$}\mbox{}
We assume that $\zeta_p \in K$. We obtain the following Lemma by the same arguments as in  \cite{Sa2}, Proposition 4.3.1 (3), and by Lemma \ref{Lem1}.
\begin{Lem}(Bockstein triangle,\;cf. \cite{Sa2}, Proposition 4.3.1 (3))
There is a distinguished triangle as a projective system with respect to the multiplicities of an irreducible components of $D$. 
\[\left\{\mathfrak{T}_{n}(r)^{{\rm syn}}_{X|D}\right\}_D \xrightarrow{\mathcal{R}}\left\{\mathfrak{T}_{n-1}(r)^{{\rm syn}}_{X|D}\right\}_D \xrightarrow{\delta_{n,1}}\left\{ \mathfrak{T}_{1}(r)^{{\rm syn}}_{X|D}[1]\right\}_D\xrightarrow{\underline{p}^n[1]}\left\{\mathfrak{T}_{n}(r)^{{\rm syn}}_{X|D}[1]\right\}_D \]
\end{Lem} 
\mbox{}\\
Therefore it suffices to show that the case $n=1$.
We show this case of (2) in Theorem \ref{Main Thm} by induction on $r$. 
If $r=0$, the pairing in \ref{Main Thm} $(2)$ is isomorphic to the pairing 
\[\plim[m]H^i(Y, \mathfrak{T}_1(0)_{X|mD}^{syn}) \times H^{2d+1-i}(Y, \alpha'_*\nu^{d+1}_{V,1}) \longrightarrow \mathbb{Z}/p\mathbb{Z}\]
by the proper-base change theorem and Lemma 7.3.3, \cite{Sa2}.
We put $\nu_Y^r:={\nu_Y^r}_1$, $\mathfrak{T}(r)_{X|mD}^{syn}:={\mathfrak{T}_1(r)}_{X|mD}^{syn}$, $\mu:=\mu_p(K)$, $\mu':=i^*\psi_*\mu_{p}$ and $\mu'':=i'^*j'_*\mu_{p}$ for simplicity. Note that $\mu'$ is the constant \'etale sheaf on $Y$ associated with the abstract group $\mu (\cong \mathbb{Z}/p\mathbb{Z})$. We define the morphism
\begin{equation}
\ind_n: \mu' \otimes^{\mathbb{L}} \mathfrak{T}(r-1)_{X|mD}^{syn}\longrightarrow  \mathfrak{T}(r)_{X|mD}^{syn}
\end{equation}
\noindent
by restricting the product structure (cf. Proposition ) {\footnotesize $ \mathfrak{T}(1)_{X|mD}^{syn} \otimes^{\mathbb{L}} \mathfrak{T}(r-1)_{X|mD}^{syn}\longrightarrow  \mathfrak{T}(r)_{X|mD}^{syn}$} to the $0$-th cohomology sheaf $\mu'$ of  $ \mathfrak{T}(1)_{X|mD}^{syn}$.  We have the following Lemma (cf. \cite{Sa2}, {\rm Lemma 10.4.1}).

\begin{Lem}
Let
\begin{equation}\label{Kdist}
\mathbb{K}(r)_m\stackrel{b_r}{ \longrightarrow }\mu' \otimes^{\mathbb{L}}\mathfrak{T}(r-1)_{X|mD}^{syn} \longrightarrow \mathfrak{T}(r)_{X|mD}^{syn}\stackrel{a_r}{ \longrightarrow }\mathbb{K}(r)_m[1]
\end{equation}
be a distinguished triangle in $D^b\big(Y_{\acute{e}t}, \mathbb{Z}/p\mathbb{Z}\big)$. Then:\\
{\rm (1)} The triple $\big(\mathbb{K}(r), a_r, b_r\big)$ is unique up to a unique isomorphism in $D^b\big(Y_{\acute{e}t}, \mathbb{Z}/p\mathbb{Z}\big)$, and $b_r$ is determined by the pair $\big(\mathbb{K}(r), a_r\big)$.\\
{\rm (2)} $\mathbb{K}(r)_m$ is concentrated in $[r, r+1]$ and $a_r$ induces isomorphism
\begin{equation}
\mathcal{H}^q\big(\mathbb{K}(r)_m\big) \cong \begin{cases}
\mu' \otimes \nu_{Y|mD}^{r-2}& (q=r),\\
\Ker\Big(\mathcal{H}^r\big(s_1(r)_{X|mD}\big)\rightarrow \nu_{Y|mD}^{r-1}\Big)& (q=r+1).
\end{cases}
\end{equation}
\end{Lem}
\begin{proof}
The assertion {\rm (2)} follows from the long exact sequence of cohomology sheaves associated with (\ref{Kdist}). The details are straightforward and left to reader.
We have $\Hom(\mu' \otimes^{\mathbb{L}}\mathfrak{T}(r-1)_{X|mD}^{syn},\;\mathbb{K}(r)_m[-1])=0$ by (2) and Lemma 2.1.1 in \cite{Sa2}. Then we obtain the assertion {\rm (1)} by Lemma 2.1.2 (3) in \cite{Sa2}. 
\end{proof}
 In what follows, we fix a pair $\big(\mathbb{K}(r), a_r\big)$ fitting into () for each $r$ with $0 \leq r \leq p-2$.
 \begin{Lem}(\cite{Sa2}, {\rm Lemma 10.4.1})\mbox{}
 
 There is a distinguished triangle in $D^b\big(V_{\acute{e}t}, \mathbb{Z}/p\mathbb{Z}\big)$
 \begin{equation}
 \mathbb{M}(d-r)[-1]\stackrel{c_r}{\longrightarrow }\mathbb\mu''\otimes^{\mathbb{L}}Ri'^!\mathfrak{T}_1(d-r)_U \longrightarrow Ri'^!\mathfrak{T}_1(d-r+1)_U  \stackrel{d_r}{\longrightarrow }\mathbb{M}(d-r).
\end{equation}
{\rm (1)} The triple $\big(\mathbb{M}(d-r), c_r, d_r\big)$ is unique up to a unique isomorphism in $D^b\big(Y_{\acute{e}t}, \mathbb{Z}/p\mathbb{Z}\big)$, and $b_r$ is determined by the pair $\big(\mathbb{K}(r), c_r\big)$.\\
{\rm (2)} $\mathbb{M}(d-r)_m$ is concentrated in $[d-r, d-r+1]$ and $c_r$ induces isomorphism
\begin{equation}
\mathcal{H}^q\big(\mathbb{M}(d-r)\big) \cong \begin{cases}
\mu'' \otimes \nu_{V, 1}^{d-r-1}& (q=d-r),\\
FM_1^{d-r+1}& (q=d-r+1).
\end{cases}
\end{equation}
\end{Lem}\mbox{}

\noindent
Let us note that for objects $\mathcal{K}_1, \mathcal{K}_2 \in  D^-(V_{\acute{e}t}, \mathbb{Z}/p^n\mathbb{Z})$ , and $ \mathcal{K}_3 \in  D^+(V_{\acute{e}t}, \mathbb{Z}/p^n\mathbb{Z})$,
we have 
\begin{equation}
\Hom_{D(V_{\acute{e}t}, \mathbb{Z}/p\mathbb{Z})}\Big(\mathcal{K}_1\otimes^{\mathbb{L}}\mathcal{K}_2, \mathcal{K}_3\Big) \cong \Hom_{D(V_{\acute{e}t}, \mathbb{Z}/p\mathbb{Z})}\Big(\mathcal{K}_1,R\mathcal{H}om_{V,\mathbb{Z}/p\mathbb{Z}}\big(\mathcal{K}_2, \mathcal{K}_3\big) \Big).
\end{equation}
\noindent
We now introduce some notations. 
For $\mathcal{K} \in D^-(V_{\acute{e}t}, \mathbb{Z}/p^n\mathbb{Z})$, we define
\begin{equation}
\mathbb{D}(\mathcal{K}):=R\mathcal{H}om_{Y, \mathbb{Z}/p\mathbb{Z}}\big(\mathcal{K},  \mu'' \otimes \nu_V^{d-1}[-d-1]\big) \in D^+(V_{\acute{e}t}, \mathbb{Z}/p^n\mathbb{Z}).
\end{equation}
This notation is due to \cite{Sa2}.
\begin{Lem}\label{KLEM}
Then there is a unique morphism 
\begin{equation}
\Big(``\plim[m]"\mathbb{K}(r)_m\Big) \otimes^{\mathbb{L}} R\alpha'_*\mathbb{M}(d-r) \longrightarrow \mu' \otimes \nu_Y^{d-1}[-d-1] \quad in D^-(Y_{\acute{e}t}, \mathbb{Z}/p^n\mathbb{Z})
\end{equation}
whose adjoint morphism $``\plim[m]"\mathbb{K}(r)_m \rightarrow R\alpha'_*\mathbb{D}( \mathbb{M}(d-r))$ fits into a commutative diagram with with distinguished rows:
{\tiny\begin{equation*}
\xymatrix@M=9pt@C=6pt{
``\plim[m]"\mathbb{K}(r)_m\ar[r] \ar[d] &``\plim[m]"\mu' \otimes^{\mathbb{L}}T(r-1)_{X|mD}^{syn} \ar[r]\ar[d]_{(\bigtriangledown)}&``\plim[m]"T(r)_{X|mD}^{syn}\ar[r]\ar[d]_{(\bigtriangledown)}&``\plim[m]"\mathbb{K}(r)_m[1] \ar[d]\\
R\alpha'_*\mathbb{D}\big(\mathbb{M}(d-r)\big) \ar[r]&R\alpha'_*\mathbb{D}\big(\mathfrak{T}_1(d-r+1)_U\big) \ar[r]& R\alpha'_*\mathbb{D}\big(\mu' \otimes ^{\mathbb{L}}\mathfrak{T}_1(d-r)_U\big) \ar[r]& R\alpha'_*\mathbb{D}\big(\mathbb{M}(d-r)\big)[1].}
\end{equation*}}
Here lower raw arises from a distinguished triangle obtained by truncation and taking $R\alpha'_*$, and the vertical arrows $(\bigtriangledown)$ comes from the pairing $(\Upsilon)$ .
\end{Lem}
\begin{proof}
The assertion follows from Lemma 2.1.2 $(1)$ and the fact that 
{\footnotesize\begin{align*} \Hom_{D^{+}(Y_{\acute{e}t}, \mathbb{Z}/p\mathbb{Z})}&\left(\mathbb{K}(r)_m,\;\mathbb{D}(\mu' \otimes ^{\mathbb{L}}\mathfrak{T}_1(d-r)_U)\right) \cong\\
&\Hom_{D^{-}(Y_{\acute{e}t}, \mathbb{Z}/p\mathbb{Z})}\left(\mathbb{K}(r)_m\otimes ^{\mathbb{L}}(\mu' \otimes ^{\mathbb{L}}\mathfrak{T}_1(d-r)_U),\;\mu' \otimes \nu_Y^{d-1}[-d-1] \right)\\
&=0, 
\end{align*}}
where the last equality follows from Lemma 8.2 $(2)$ and Lemma 2.1.1 in \cite{Sa2}.
\end{proof}
We turn to the proof of the Theorem \ref{Main Thm}(2) and claim the following:
\begin{Prop}\label{Ms}
Let $0 \leq r \leq p-2$. Then for $i \in \mathbb{Z}$, the pairing 
\begin{equation}
\plim[m] H^{i}(Y, \mathbb{K}(r)_m)\times H^{2d+1-i}(Y, \alpha'_*\mathbb{M}(d-r)) \rightarrow \mu \otimes H^{d}(Y, \nu_Y^{d-1})\xrightarrow{id\otimes \tr_Y}\mu
\end{equation}
is a non-degenerate pairing of compact group and discrete group.
\end{Prop}
 We will prove this proposition \ref{Ms}  in the next subsection. We first complete the proof of Theorem \ref{Main Thm} (2) by descending induction on $n \leq d$, admitting Proposition.
We get the Theorem 1.3 (2) by applying \cite{Sa2}, {\rm Lemma 10.4.10} to the  diagram in Lemma \ref{KLEM}.

\subsection{Proof of Proposition} \mbox{} 

Let $\mathbb{V}(d-r)$ be an object of $D^b(V_{\acute{e}t}, \mathbb{Z}/p^n\mathbb{Z})$ fitting into a distinguished triangle
\begin{equation}
\lambda_{V,1}^{d-r+1}[-d+r-2]  \rightarrow\mathbb{V}(d-r) \rightarrow \mathbb{M}(d-r) \rightarrow \lambda_{V,1}^{d-r+1}[-d+r-1],
\end{equation}
where the last morphism is defined as the composite
\begin{equation}
\mathbb{M}(d-r) \rightarrow \mathcal{H}^{d-r+1}(\mathbb{M}(d-r))[-d+r-1]\cong {FM_{1,V}}^{d-r+1}[-d+r-1]\rightarrow  \lambda_{V,1}^{d-r+1}[-d+r-1].
\end{equation}
Here the last map is due to \cite{Sa2}.
By Lemma 8.2 (2) and Lemma 2.1.2(3) in \cite{Sa2}, $\mathbb{V}(d-r)$ is concentrated in $[d-r, d-r+1]$ and unique up to a unique isomorphism. We have
\begin{equation}
\mathcal{H}^q(\mathbb{V}(d-r)) \cong \begin{cases}
\mu' \otimes \nu_{V,1}^{d-r-1} & (q=d-r)\\
U^1M^{d-r+1}_V & (q=d-r+1).
\end{cases}
\end{equation}
\begin{Lem}(cf. \cite{Sa2}, Lemma 10.5.3)\label{LLEM1} There is a unique morphism
\begin{equation}
R\alpha'_*\mathbb{V}(d-r)[-1]\longrightarrow \mathbb{D}\big(``\plim[m]"F\mathscr{M}^{r}_m[-r]\big) \quad in\; D^+(Y_{\acute{e}t}, \mathbb{Z}/p\mathbb{Z})
\end{equation}
fitting into a commutative diagram with distinguished rows
{\tiny\begin{equation}\label{CD1}
\xymatrix@C=8pt@M=8pt{
R\alpha'_*\mathbb{V}(d-r)[-1]\ar[r]\ar[d]^{(1)} &R\alpha'_*\mathbb{M}(d-r)[-1]\ar[r]\ar[d]^{(2)} \ar@{}[rd]|{(\mathfrak{W})}& R\alpha'_*\lambda_{V,1}^{d-r+1}[-d+r-2] \ar[r]\ar[d]^{(3)}& R\alpha'_*\mathbb{V}(d-r)\ar[d]^{(4)}\\
 \mathbb{D}\big(``\plim[m]"F\mathscr{M}^{r}_m[-r]\big) \ar[r] & \mathbb{D}\big(``\plim[m]"\mathbb{K}(r)_m\big)  \ar[r]& \mathbb{D}\big(``\plim[m]"\mu' \otimes \nu_{Y|mD}^{r-2}[-r+1]\big)  \ar[r]& \mathbb{D}\big(``\plim[m]"F\mathscr{M}^{r}_m[-r]\big) [1] 
}\end{equation}}
Here the morphism $(2)$ obtained in Lemma \ref{KLEM} and the morphism $(3)$ obtained in section \S4. 
\end{Lem}
\begin{proof}
We have $\Hom\left(R\alpha'_*\mathbb{V}(d-r),\;\mathbb{D}\big(``\plim[m]"\mu' \otimes \nu_{Y|mD}^{r-2}[-r+1]\big)\right)=0$. It suffices to show that the commutativity of the central square of $(\ref{CD1})$ by Lemma 2.1.2 (1) in \cite{Sa2}. We consider the composition of the morphisms 
\begin{align*}\upsilon:\;&\left(R\alpha'_*\mathbb{M}(d-r)[-1]\right)\otimes^{\mathbb{L}}(``\plim[m]"\mu' \otimes \nu_{Y|mD}^{r-2}[-r+1])[-r+1]\\ &\longrightarrow \left(R\alpha'_*\mathbb{M}(d-r)[-1]\right)\otimes^{\mathbb{L}} (``\plim[m]"\mathbb{K}(r)_m) \longrightarrow  \mu'' \otimes \nu_V^{d-1}[-d-1]\end{align*}, 
where the last morphism follows from the morphism $(2)$. It remains to show that the commutativity of the central square $(\mathfrak{W})$. To prove the commutativity of the square $(\mathfrak{W})$, we first prove the following Lemma:
\begin{Lem}
We have the following commutative diagram\;(we denote by (CSQ)) in $D(Y_{\acute{e}t},\;\mathbb{Z}/p\mathbb{Z})$:
\[
\xymatrix@C=30pt@M=9pt{
R\alpha'_*i'^*\mathfrak{T}(d-r)_U[-1]\otimes^{\mathbb{L}} \{\mathfrak{T}(r)^{{\rm syn}}_{X|mD}\}_m\ar[r]^-{{\rm product}}\ar[d]^{a_{d-r}[-1] \otimes^{\mathbb{L}} a_{r}}\ar@{}[rd]|{(CSQ)}& i^*R\psi_*\mu_p^{\otimes d+1}[-1]\ar[d]\\
R\alpha'_*\mathbb{M}(d-r)[-1]\otimes^{\mathbb{L}}\{\mathbb{K}(r)_m\}_m\ar[r]&\mu'\otimes \nu_Y^{d-1}[-d-1].
}
\]
\end{Lem}
\begin{proof}
We have a commutative diagram in $D^b(Y_{\acute{e}t},\mathbb{Z}/p\mathbb{Z})$ by using an anti-commutative diagram $(10.4.4)$ in \cite{Sa2}:
 {\tiny\[
\xymatrix@C=15pt@M=8pt{
R\alpha'_*i'^*\mathfrak{T}(d-r)_U[-1]\otimes^{\mathbb{L}} \{\mathfrak{T}(r)^{{\rm syn}}_{X|mD}\}_m\ar[r]^-{{\rm product}}\ar[d]^{a_{d-r}[-1] \otimes^{\mathbb{L}} \id}\ar@{}[rd]|{(I)}& R\alpha'_*\mu''\otimes i^*R\psi_*\mu_p^{\otimes d-r-1}[-1]\otimes^{\mathbb{L}} \{\mathfrak{T}(r)^{{\rm syn}}_{X|mD}\}_m\ar[d]\\
R\alpha'_*\mathbb{M}(d-r)[-1]\otimes^{\mathbb{L}} \{\mathfrak{T}(r)^{{\rm syn}}_{X|mD}\}_m\ar[r]& R\alpha'_*\mu''\otimes Ri^!R\alpha_*\mathfrak{T}(d-r)_U\otimes^{\mathbb{L}} \{\mathfrak{T}(r)^{{\rm syn}}_{X|mD}\}_m.
}
\]}
By Lemma 10.5.6 in \cite{Sa2}, we obtain the following commutative diagram:
 \[
\xymatrix@C=26pt@M=8pt{
 R\alpha'_*\mu''\otimes i^*R\psi_*\mu_p^{\otimes d-r-1}[-1]\otimes^{\mathbb{L}} \{\mathfrak{T}(r)^{{\rm syn}}_{X|mD}\}_m\ar[d]\ar[r]\ar@{}[rd]|{(II)}&i^*R\psi_*\mu_p^{\otimes d+1}[-1]\ar[d]\\
 R\alpha'_*\mu''\otimes Ri^!R\alpha_*\mathfrak{T}(d-r)_U\otimes^{\mathbb{L}} \{\mathfrak{T}(r)^{{\rm syn}}_{X|mD}\}_m\ar[r]&\mu'\otimes \nu_Y^{d-1}[-d-1].
}
\]
Thus we combine the diagram $(I)$ with $(II)$, we obtain the following commutative diagram:
{\tiny \[
\xymatrix@C=7pt@M=7pt{
R\alpha'_*i'^*\mathfrak{T}(d-r)_U[-1]\otimes^{\mathbb{L}} \{\mathfrak{T}(r)^{{\rm syn}}_{X|mD}\}_m\ar[r]^-{{\rm product}}\ar[d]^{a_{d-r}[-1] \otimes^{\mathbb{L}} \id}\ar@{}[rd]|{(I)}& R\alpha'_*\mu''\otimes i^*R\psi_*\mu_p^{\otimes d-r-1}[-1]\otimes^{\mathbb{L}} \{\mathfrak{T}(r)^{{\rm syn}}_{X|mD}\}_m\ar[d]\ar@{}[rd]|{(II)}\ar[r]&i^*R\psi_*\mu_p^{\otimes d+1}[-1]\ar[d]\\
R\alpha'_*\mathbb{M}(d-r)[-1]\otimes^{\mathbb{L}} \{\mathfrak{T}(r)^{{\rm syn}}_{X|mD}\}_m\ar[r]\ar[d]\ar@{}[rrd]|{(III)}& R\alpha'_*\mu''\otimes Ri^!R\alpha_*\mathfrak{T}(d-r)_U\otimes^{\mathbb{L}} \{\mathfrak{T}(r)^{{\rm syn}}_{X|mD}\}_m\ar[r]&\mu'\otimes \nu_Y^{d-1}[-d-1]\ar[d]^-{\id}\\
R\alpha'_*\mathbb{M}(d-r)[-1]\otimes^{\mathbb{L}}\{\mathbb{K}(r)_m\}_m\ar[rr]&&\mu'\otimes \nu_Y^{d-1}[-d-1].
}
\]}
The commutativity of $(III)$ follows from the construction of the pairing.
This completes the proof.

\end{proof}
We return to the proof of the commutativity of the square $(\mathfrak{W})$. There is a commutative diagram ($\mathfrak{Diag}$):
{\tiny \[
\xymatrix@C=6pt@M=6pt{
\left(R\alpha'_*\mathbb{M}(d-r)[-1]\right)\otimes^{\mathbb{L}}\{\mu' \otimes \nu_{Y|mD}^{r-2}\}_m[-r+1] \ar[r]\ar[dd]& \left(R\alpha'_*\mathbb{M}(d-r)[-1]\right)\otimes^{\mathbb{L}} \{\mathbb{K}(r)_m\}_m \ar[r] \ar@/_90pt/@{>}[dd]\ar@{}[rd]|{(CSQ)}& \mu'' \otimes \nu_V^{d-1}[-d-1]\\
&R\alpha'_*i'^*\mathfrak{T}(d-r)_U[-1]\otimes^{\mathbb{L}} \{\mathfrak{T}(r)^{{\rm syn}}_{X|mD}\}_m\ar[u]\ar[r]^-{{\rm product}}\ar[d]& i'^*R\psi_*\mu_p^{\otimes d+1}[-1]\ar[u]\ar[d]\\
 R\alpha'_*\lambda_{V,1}^{d-r+1}[-d+r-2]\otimes^{\mathbb{L}}\{\mu' \otimes \nu_{Y|mD}^{r-2}\}_m[-r+1] \ar[r]&R\alpha'_*\lambda_{V,1}^{d-r+1}[-d+r-2]\otimes^{\mathbb{L}}\{\mathbb{K}(r)_m\}_m\ar[r]^-{(*)}& \mu'' \otimes \nu_V^{d-1}[-d-1].}\]}
Here the morphism $(*)$ correspondings to the pairing in section \S4, which follows from the fact that {\footnotesize \begin{align*}\Hom\left(R\alpha'_*\lambda_{V,1}^{d-r+1}[-d+r-2]\otimes^{\mathbb{L}}\{\mathbb{K}(r)_m\}_m,\; \mu'' \otimes \nu_V^{d-1}[-d-1]\right) \cong \\ \Hom\left(R\alpha'_*\lambda_{V,1}^{d-r+1}\otimes^{\mathbb{L}}\left(\mu' \otimes \nu^{r-2}_{Y|mD_s}\right),\; \mu' \otimes \nu_V^{d-1}\right)\end{align*}} by Lemma 2.1.1 in \cite{Sa2}. In the right hand side $\Hom$, there is a product morphism constructed in $\S4$. Then we get the morphism $R\alpha'_*\lambda_{V,1}^{d-r+1}[-d+r-2]\otimes^{\mathbb{L}}\{\mathbb{K}(r)_m\}_m\longrightarrow \mu'' \otimes \nu_V^{d-1}[-d-1]$.
The above diagram $(\mathfrak{Diag})$ commutes by using the commutativity of (CSQ)\;(the upside of the right square). Then we obtain the middle commutative square of $(\ref{CD1})$.
\end{proof}
\begin{Lem}\label{LLEM2}
{\tiny \begin{equation}\label{CD2}
\xymatrix@C=5pt@M=5pt{
 \mu' \otimes \alpha'_*\nu_{V}^{d-r-1}[-d+r] \ar[d]^-{\mathfrak{f}_1}  \ar[r]&R\alpha'_*\mathbb{V}(d-r) \ar[r]\ar[d]^-{\mathfrak{f}_2} &R\alpha'_*U^1M^{d-r+1}_V[-d+r-1]\ar[r]\ar[d]^-{\mathfrak{f}_3}& \mu' \otimes \alpha'_*\nu_{V}^{d-r-1}[-d+r+1]\ar[d]^-{\mathfrak{f}_1[1]}.\\
\mathbb{D}(``\plim[m]"\lambda^r_{Y|mD}[-r])\ar[r]&\mathbb{D}(``\plim[m]"F{\mathscr{M}^r_m}[-r]) \ar[r]&\mathbb{D}(``\plim[m]"U^1{\mathscr{M}^r_m}[-r])\ar[r]&
\mathbb{D}(``\plim[m]"\lambda^r_{Y|mD}[-r])[1].
}\end{equation}}
\end{Lem}
\begin{proof}
We have $\Hom\left( \mu' \otimes \alpha'_*\nu_{V}^{d-r-1}[-d+r+1],\; \mathbb{D}(``\plim[m]"U^1{\mathscr{M}^r_m}[-r])  \big)\right)=0$, and the left square commutes by a similar argument in $(1)$. Then there is a unique morphism $\mathfrak{f}'_3: R\alpha'_*U^1M^{d-r+1}_V[-d+r] \rightarrow \mathbb{D}(``\plim[m]"U^1{\mathscr{M}^r_m}[-r-1])$ fitting into $(\ref{CD2})$ by Lemma 2.1.2 (2) in \cite{Sa2}. We have $\mathfrak{f}'_3=\mathfrak{f}_3$ by the commutativity of (CSQ) and the construction of maps.
\end{proof}

\noindent
{\bf proof of the main result Theorem \ref{Main Thm}(2):}\\
In Lemma \ref{LLEM2}, since morphisms $\mathfrak{f}_1$ and $\mathfrak{f}_3$ are isomorphisms, the morphism $\mathfrak{f}_2$ is an isomorphism by Theorem and . 
Thus in Lemma \ref{LLEM1},  the morphism $(1)$ is an isomorphism. This completes the proof of the main result Theorem \ref{Main Thm}(2).$\square$\\
Take $i=2d$, $r=d$ in Theorem $\ref{Main Thm}$, we obtain the following:
 \begin{Cor}(cf. \cite{JSZ})
We obtain a natural isomorphism 
\[\plim[m]H^{2d}(X, \mathfrak{T}_n(d)_{X|mD})\xrightarrow{\cong}\pi_1^{ab}(U)/p^n. \]
\end{Cor}

\section{Acknowledgements}
I am great thankful to Professor Kanetomo Sato for his great help and discussion. This problem was suggested to me by him. I would like to express my thanks to my parents for their support.

\end{document}